\crefname{hypothesis}{Hypothesis}{Hypotheses}
\title{SOC-MartNet: A Martingale Neural Network for the Hamilton-Jacobi-Bellman  Equation without Explicit $\inf_{u \in U} H$ in Stochastic Optimal Controls
\thanks{This work of SF and TZ is supported by the NSF of China (under grant 12288201) and the Youth Innovation Promotion Association (CAS). Date. July 8, 2024. {\it Previous version of this manuscript is published on arxiv arXiv:2405.03169, May 6, 2024}}
}
\author{Wei Cai\thanks{Department of Mathematics, Southern Methodist University, Dallas, TX 75275, USA.
Corresponding author.
  (\email{cai@smu.edu}).}
\and Shuixin Fang\thanks{Institute of Computational Mathematics and Scientific/Engineering Computing,
Academy of Mathematics and Systems Science, Chinese Academy of Sciences, Beijing, 100190, P. R. China.
  (\email{sxfang@amss.ac.cn}).}
\and Tao Zhou\thanks{Institute of Computational Mathematics and Scientific/Engineering Computing,
Academy of Mathematics and Systems Science, Chinese Academy of Sciences, Beijing, 100190, P. R. China. Corresponding author.
  (\email{tzhou@lsec.cc.ac.cn}).}
}
\algrenewcommand\algorithmicrequire{\textbf{Input:}}
\algrenewcommand\algorithmicensure{\textbf{Output:}}
\newcommand{\R}{\mathbb{R}}
\newcommand{\di}{\,\mathrm{d}}
\newcommand{\M}{\mathcal{M}}
\newcommand{\E}[1]{\mathbb{E}\left[#1\right]}
\newcommand{\br}[1]{\left(#1\right)}
\newcommand{\abs}[1]{\left\vert#1\right\vert}
\newcommand{\bbr}[1]{\left\{#1\right\}}
\newcommand{\vbr}[2]{#1(#2 #1)}
\newcommand{\rbr}[1]{\left[#1\right]}
\newcommand{\argmin}{\mathop{\arg\min}}
\newcommand{\B}[1]{\boldsymbol{#1}}
\newcommand{\sptext}[1]{\;\; \text{#1} \;\;}
\pgfplotsset{width=10cm,compat=1.9}
\newcommand{\pathhjbfigd}{CodeAndFig/SOCMN_HJB3te1e-2_d1000/outputs/}
\newcommand{\pathhjbfige}{CodeAndFig/SOCMN_HJB3te5e-2to1_d1000/outputs/}
\newcommand{\pathhjbfigf}{CodeAndFig/SOCMN_HJB3_d50_100/outputs/}
\newcommand{\pathhjbfigh}{CodeAndFig/SOCMN_HJBv3_d2e3_te1/outputs/}
\newcommand{\pathhjbfigi}{CodeAndFig/SOCMN_HJBv3_d2e3_te0.5/outputs/}
\newcommand{\pathhjbfigj}{CodeAndFig/SOCMN_HJBv3_d2e3_te0.1/outputs/}
\newcommand{\pathhjbfigk}{CodeAndFig/SOCMN_HJBv3_d2e3_te0.01/outputs/}
\newcommand{\pathhjbfig}{CodeAndFig/Non-deg_HJBv3_t0te/outputs/}
\newcommand{\pathreva}{CodeAndFig_Revision/Count/outputs/}
\newcommand{\pathrevb}{CodeAndFig_Revision/HJB2b/outputs/}
\newcommand{\pathrevc}{CodeAndFig_Revision/HJB2c/outputs/}
\newcommand{\pathrevd}{CodeAndFig_Revision/HJBst/}
\newcommand{\pathreve}{CodeAndFig_Revision/HJBeps/outputs/}
\newcommand{\pathrevf}{CodeAndFig_Revision/HJBpert/}
\newcommand{\cb}[1]{{\color{black} #1}}
\begin{document}

\maketitle

% REQUIRED
\begin{abstract}
    In this paper, we propose a martingale-based neural network, SOC-MartNet, for solving high-dimensional Hamilton-Jacobi-Bellman (HJB) equations where no explicit expression is needed for the infimum of the Hamiltonian, $\inf_{u \in U} H(t,x,u, z,p)$, and stochastic optimal control problems (SOCPs) with controls on both drift and volatility.
    We reformulate the HJB equations for the value function by training two neural networks, one for the value function and one for the optimal control with the help of two stochastic processes - a Hamiltonian process and a cost process.
    The control and  value networks are trained such that the associated Hamiltonian process is minimized to satisfy the minimum principle of a feedback SOCP, and the cost process becomes a martingale, thus, ensuring the value function network as the solution to the corresponding HJB equation.
    Moreover, to enforce the martingale property for the cost process, we employ an adversarial network and construct a loss function characterizing the projection property of the conditional expectation condition of the martingale.
    Numerical results show that the proposed SOC-MartNet is effective and efficient for solving HJB-type equations and SOCPs with a dimension up to \cb{$10,000$ in a small number of iteration steps (less than $6000$)} of training.
\end{abstract}

% REQUIRED
\begin{keywords}
  Hamilton-Jacobi-Bellman equation;  high dimensional PDE; stochastic optimal control; deep neural networks; adversarial networks; martingale formulation.
\end{keywords}

% REQUIRED
\begin{MSCcodes}
    49L12, 49L20, 49M05, 65C30, 93E20
\end{MSCcodes}

\section{Introduction}

In this paper, we consider the numerical solution of high-dimensional Hamilton-Jacobi-Bellman (HJB)-type equations and their applications to stochastic optimal control problems (SOCPs).
The considered HJB-type equation is given in the form of
\begin{equation}\label{eq_HJBPDE}
    \partial_t v(t, x) + \mathcal{L} v(t, x)
    + \inf_{\kappa \in U} H\br{t, x, \kappa, \partial_x v(t, x), \partial_{xx}^2 v(t, x)}  = 0
\end{equation}
for $(t, x) \in [0, T) \times \R^d$
with $\partial_x=\nabla_x$ and $\partial_{xx}^2=\nabla_x \nabla_x^{\top}$ as the gradient and Hessian operator, respectively, and a terminal condition
\begin{equation}\label{eq_term}
    v(T, x) = g(x), \quad x \in \R^d,
\end{equation}
where  $\mathcal{L}$ is a differential operator given by
\begin{equation} \label{generator}
    \mathcal{L} := \mu^{\top}(t, x) \partial_x + \frac{1}{2} \operatorname{Tr}\bbr{\sigma \sigma^{\top}\br{t, x} \partial_{xx}^2}
\end{equation}
for some given functions $\mu: [0, T] \times \R^d \to \R^d$ and $\sigma: [0, T] \times \R^d \to \R^{d\times q}$;
$H(t,x,\kappa, z,p)$ is the Hamiltonian as a mapping  $(t, x,\kappa, z,p) \in [0, T] \times \R^d \times  U \times \R^d \times \R^{d\times d} \to \R$, and $U \subset \R^m$ .
The HJB-type equation~\eqref{eq_HJBPDE} is general and covers common HJB equations  in SOCPs; see the discussions in section~\ref{sec_amart}.
In addition to this equation, this paper also explores the application of the proposed method to common semi-linear parabolic equations without boundary conditions.

The HJB equation is a fundamental partial differential equation (PDE) in the field of optimal control theory \cite{kushner2013,Fleming1975Deterministic,Yong1999Stochastic,Pham2009Continuous,carmona2016}.
In the typical framework of dynamic programming \cite{Bellman1957Dynamic,Fleming1975Deterministic}, the optimal feedback control is identified by the verification technique,
which involves minimizing a Hamiltonian depending on the derivatives of a value function \cite[p. 278]{Yong1999Stochastic}.
On this account, the HJB equation, which governs this value function, stands as a cornerstone of dynamic programming.
The well-posedness of HJB equations has been firmly  established with the theory of viscosity solutions; see, e.g., \cite{Lions1982Generalized,Ishii1989uniqueness,Crandall1992User}.
But solving the HJB equation is still challenging due to its non-smoothness and high dimensionality.

The wide application of HJB equations has spurred extensive research on efficient numerical methods.
Conventional approaches include the Galerkin method \cite{Smears2014Discontinuous},
the finite volume method \cite{osher1991shu,Richardson2006Numerical},
the monotone approximation scheme \cite{Barles2002On},
the patchy dynamic programming \cite{Cacace2012patchy}, etc.
These methods generally suffer from the curse of dimensionality (CoD) \cite{Bellman1957Dynamic}, that is, the computation complexity increases exponentially with the dimension of the HJB equations.
In \cite{Kharroubi2014numerical,Kharroubi2015Discrete}, the HJB equation is solved through the associated BSDE deduced from a Feynman-Kac representation \cite{Kharroubi2015Feynman}, \cb{but the resolution of BSDE relies on least square regression on a set of basis functions, 
where the CoD arises since the number of required basis functions explodes with the dimension.}
There are also literature leveraging dimension reduction techniques, e.g., \cite{Tensor2021Tensor,Mitigating2017Kang,Kalise2018Polynomial},
but these techniques depend on the dimension reducibility of the problem.

In recent years, deep learning has emerged as a promising tool to overcome the CoD, leading to a growing body of deep learning methods for solving high dimensional PDEs, e.g., \cite{weinan2017deep,han2018solving,Zhang2022FBSDE,Zang2020Weak,hure2020deep,Gao2023Failure,Guo2022Monte,Raissi2019Physics,hu2024sdgd}.
While demonstrably effective for usual high-dimensional PDEs, these methods encounter new challenges when applied to the HJB-type equation~\eqref{eq_HJBPDE}.
One of the challenges stems from the inherent infimum operator
$\inf_{\kappa \in U}$ imposed on the Hamiltonian $H$ in the HJB equation.
Directly minimizing the Hamiltonian for every time-space point $(t, x)$ is computationally expensive, in fact, a CoD problem itself for high dimensional control spaces.

To avoid this issue, the works in \cite{Darbon2021some,Darbon2020Overcoming,Darbon2023Neural,hu2024sdgd} focus on Hamilton-Jacobi equations where $\inf_{\kappa \in U} H$ is explicitly known.
The work \cite[section 3.4]{Nakamura2021Adaptive} considers specific optimal control problems such that $\inf_{\kappa \in U} H$ admits an analytic expression.
There are also research resorting to deep neural networks (DNNs).
For example, \cite[section 3.2]{Ji2022Solving} introduces a DNN to learn the feedback control $u(t, x)$
such that $u(t, x)$ becomes a stationary point of $H$, i.e., $\partial_{\kappa} H\vert_{\kappa = u(t, x)} = 0$,
whereas certain conditions on $U$ and $H$ are needed to ensure the stationary point is a minimizer of $\kappa \mapsto H$.
The paper \cite{Zhou2021Actor} considers static HJB-type PDEs,
where solving $\inf_{\kappa \in U} H$ is avoided by reformulating the problem into a SOCP solved by reinforcement learning.
\cb{In \cite{Li2024neural}, the authors propose a neural network approach for high-dimensional SOCPs, using the stochastic Pontryagin maximum principle to guide a forward SDE for state-space exploration for handling problems with complex dynamics. 
% It is not their focus how to efficiently solve problems without explicit $\inf_{\kappa \in U} H$.
}
In addition, there are works on numerical methods for SOCPs by not explicitly solving the HJB equation, e.g., \cite{Hure2021Deep,Bachouch2022Deep,Jiequn2016Deep,Zhao2017High,Fu2020Highly,Gong2017efficient}.
At this time, developing new efficient numerical methods for  high-dimensional HJB equations still remains an actively researched topic.

In this paper, we propose a new approach for solving the high-dimensional HJB-type equation \eqref{eq_HJBPDE}.
In our approach, the control and value functions of the problem are approximated by DNNs.
The HJB equation is encoded into a Hamiltonian process and a cost process both depending on the control network and the value network.
The value and the control networks are trained by minimizing a functional of the Hamiltonian process while ensuring the cost process to be a martingale, which guarantees the value function is the solution to the HJB equation.
The martingale property is  enforced by an adversarial learning, whose loss function is constructed by characterizing the projection property of conditional expectations.
The proposed method, named SOC-MartNet, is able to solve high dimensional stochastic optimal control problems, by using the the approach of a martingale formulation, originally developed in the DeepMartNet for boundary value and eigenvalue problems of high dimensional PDEs \cite{cai2023deepmartnet,cai2023deepmartnet2}.
Our numerical experiments show that the proposed SOC-MartNet is highly effective and efficient for solving equations with dimension up to $2000$ in a small number of iterations of training.

In the SOC-MartNet method,
% enjoys high computational efficiency with the martingale formulation.
% In this method,
the task of finding $\inf_{\kappa \in U} H$ for each $(t, x)$ is accomplished by training an optimal control network to minimize a functional of the Hamiltonian process, and  avoiding  the need of evaluating explicitly the infimum in the HJB equation.
Moreover, the training algorithm enjoys parallel efficiency as it is free of time-directed iterations during gradient computation.
This feature is much different from existing deep-learning probabilistic methods for PDEs.
The SOC-MartNet also demonstrates broad applicability, effectively handling high-dimensional HJB equations and parabolic equations as well as SOCPs.

The remainder of this paper is organized as follows.
In section~\ref{sec_dpp}, we briefly review the main ideas in dynamic programming and minimum principle for solving SOCPs as well as the proposed computational approach.
In section~\ref{sec_amart}, we propose the SOC-MartNet and its algorithm for general non-degenerated HJB equations and parabolic equations.
Numerical results are presented in section~\ref{sec_tests}.
A conclusion and plan for future work are given in section~\ref{sec_conclu}.

\section{Dynamic programming and minimum principle, and computational approach}\label{sec_dpp}

We consider a filtered complete probability space $(\Omega, \mathcal{F}, \mathbb{F}^B, \mathbb{P})$ with $\mathbb{F}^B := (\mathcal F_t)_{0\leq t \leq T}$ as the natural filtration of the standard $q$-dimensional Brownian motion $B = (B_t)_{0\le t \leq T}$, and $T \in (0, \infty)$ a deterministic terminal time.
Let $\mathcal{U}_{\mathrm{ad}}$ be the set of admissible feedback control functions defined by
\begin{equation}\label{eq_defUad}
    \mathcal{U}_{\mathrm{ad}} := \bbr{u: [0, T] \times \R^d \to U \big\vert\; \text{$u$ is Borel measurable}} \sptext{with} U \subset \R^m.
\end{equation}
For any $u \in \mathcal{U}_{\mathrm{ad}}$, the controlled state process $X^u$ is governed by the following stochastic differential equation (SDE):
\begin{equation}\label{eq_stateq}
    X_t^u = x_0 + \int_0^t \bar{\mu}\br{s, X_s^u, u(s, X_s^u)} \di s + \int_0^t \bar{\sigma}\br{s, X_s^u, u(s, X_s^u)} \di B_s, \quad t \in [0, T]
\end{equation}
with $x_0 \in \R^d$,
where $\bar{\mu}: [0, T] \times \R^d \times U \to \R^d$ and $\bar{\sigma}: [0, T] \times \R^d \times U \to \R^d$ are the controlled drift coefficient and controlled diffusion coefficient, respectively, and the stochastic integral with respect to $B_s$ is of It\^o type.
The cost functional of $u$ is given by
\begin{equation}\label{eq_defcost}
    J(u) := \E{\int_0^T c\br{s, X_s^u, u(s, X_s^u)} \di s + g(X_T^u)},
\end{equation}
where $c: [0, T] \times \R^d \times U \to \R$ and $g: \R^d \to \R$ characterize the running cost and terminal cost, respectively.
Our main focus is the following SOCP:
\begin{equation}\label{eq_wscop}
    \text{Find}\;\; u^* \in \mathcal{U}_{\mathrm{ad}} \sptext{such that} J\br{u^*} = \inf_{u \in \mathcal{U}_{\mathrm{ad}}} J\br{u}.
\end{equation}

To carry out the approach of dynamic programming, we define the value function $v$ by $v(t, x) := \inf_{u \in \mathcal{U}_{\mathrm{ad}}} J\br{t, x, u}$ with
\begin{equation*}
    J\br{t, x, u} := \E{\int_t^T c\br{s, X_s^u, u(s, X_s^u)} \di s + g(X_T^u) \Big\vert X_t^u = x}
\end{equation*}
for $(t, x) \in [0, T] \times \R^d$.
Under certain conditions (see, e.g., \cite[Theorem 4.3.1 and Remark 4.3.4]{Pham2009Continuous}), the value function $v$ is the viscosity solution to the following fully nonlinear HJB equation
\begin{equation}\label{eq2_hjb}
    \partial_t v(t, x) + \inf_{\kappa \in U} {H}\br{t, x, \kappa, \partial_x v(t, x), \partial_{xx}^2 v(t, x)} = 0, \quad (t, x) \in [0, T) \times \R^d
\end{equation}
with the terminal condition $v(T, x) = g(x)$, $x \in \R^d$, and the Hamiltonian ${H}$ given by
\begin{equation}\label{eq_defbarH}
    {H}(t, x, \kappa, z, p) := \frac{1}{2} \operatorname{Tr}\br{p \bar{\sigma}\bar{\sigma}^{\top}(t, x, \kappa)}  + z^{\top} \bar{\mu}(t, x, \kappa) + c(t, x, \kappa)
\end{equation}
for $(t, x, \kappa, z, p) \in [0, T]  \times \R^d \times U \times \R^d \times \R^{d\times d}$.

Under the regularity condition $v \in C^{1, 2}$, i.e., $v$ is once and twice continuously differentiable with respect to $t \in [0, T]$ and $x \in \R^d$, respectively,
the classical verification theorem (see, e.g., \cite{carmona2016} or \cite[p. 268, Theorem 5.1]{Yong1999Stochastic}) reveals the optimal feedback control as
\begin{equation}\label{eq0_uXargmin0}
    u^*(t, X_t^*) \in \argmin_{\kappa \in U} {H}\br{t, X_t^*, \kappa, \partial_x v(t, X_t^*), \partial_{xx}^2 v(t, X_t^*)}, \quad t \in [0, T],
\end{equation}
where $X^* := X^{u^*}$ is the controlled diffusion defined in \eqref{eq_stateq} corresponding to the optimal control $u^*$. The above equation \eqref{eq0_uXargmin0} implies that for $t\in [0,T]$
\begin{equation}\label{eq0_uXargmin}
\begin{aligned}
    &H\br{t, X_t^*, u^\ast(t,X_t^\ast), \partial_x v(t, X_t^*), \partial_{xx}^2 v(t, X_t^*)} \\
    =\;& \inf_{\kappa \in U} H\br{t, X_t^*, \kappa, \partial_x v(t, X_t^*),  \partial_{xx}^2 v(t, X_t^*)}.
\end{aligned}
\end{equation}

Therefore, by the minimum principle \eqref{eq0_uXargmin}, to find the optimal feedback control, it is sufficient to ensure
\begin{equation}\label{eq_uXargmin0}
    u^*(t, x) \in \argmin_{\kappa \in U} {H}\br{t, x, \kappa, \partial_x v(t, x), \partial_{xx}^2 v(t, x)}, \quad x \in \Gamma_t, \quad t \in [0, T],
\end{equation}
or for all $t \in [0, T]$ and $x \in \Gamma_t$,
\begin{equation} \label{eq_uXargmin}
\begin{aligned}
    &H\br{t, x, u^\ast(t,x), \partial_x v(t, x), \partial_{xx}^2 v(t, x)}\\
    =\;& \inf_{\kappa \in U} {H}\br{t, x, \kappa, \partial_x v(t, x),  \partial_{xx}^2 v(t, x)}
\end{aligned}
\end{equation}
for some state set $\Gamma_t \supset \Gamma(X_t^*)$,
where $\Gamma(X_t^*)$ denotes the support set of the probability density function of $X_t^*$.
% \begin{equation}
%     \Gamma(X_t^*) := \cap\bbr{E \in \mathcal{B}(\R^d): \Pra{X_t^* \in E} = 1}
% \end{equation}
% with $\mathcal{B}(\R^d)$ the class of Borel sets in $\R^d$.

\bigskip

\noindent {\bf Computational approach.} On the basis of \eqref{eq_uXargmin}, the key step for solving the SOCP~\eqref{eq_wscop} is then to find the value function $v$ from the HJB equation~\eqref{eq2_hjb},  but,  the evaluation of the $\inf_{\kappa \in U} {H}$   in the the HJB equation  \eqref{eq_wscop}, thus finding $v(t,x)$, suffers from the CoD for a high dimensional control space. However, if the optimal control $u^\ast$ is known, then from the  minimum principle \eqref{eq_uXargmin}, the HJB equation is reduced to,  without the inf operation,
\begin{equation}\label{eq2_hjb1}
    \partial_t v(t, x) +  {H}\br{t, x, u^\ast(t,x), \partial_x v(t, x), \partial_{xx}^2 v(t, x)} = 0, \quad (t, x) \in [0, T) \times \Gamma_t.
\end{equation}
This argument suggests that we should consider a computational approach, which produces approximations simultaneously to both the value function $v$ and the optimal control $u^\ast$ represented by separate neural networks, to ensure that \eqref{eq2_hjb1} will hold once the training of the networks leads to convergence. Therefore, before the convergence of the network for the optimal control, the value network is in fact an approximation to a version of the HJB equation \eqref{eq2_hjb1} where the optimal control $u^\ast$ is replaced with its neural network approximation. 
Similarly, the infimum of the Hamiltonian in \eqref{eq_uXargmin} is taken with the value function replaced by its network approximation. It is expected that once both networks have converged, the correct forms of HJB equation and the infimum of Hamiltonian will have been enforced to yield both the optimal control and value function. This  will be the approach of the proposed numerical method.

\section{Proposed method}\label{sec_amart}

Throughout this section, we assume that the HJB-type equation~\eqref{eq_HJBPDE} 
admits a classical solution $v \in C^{1,2}$, 
\cb{where the regularity of $v$ is needed for the martingale formulation. 
According to standard PDE theory (see, e.g., \cite[Section 6.2, Theorem 5]{Krylov1987Nonlinear})}, 
this regularity follows from the non-degeneracy condition of \eqref{eq_HJBPDE}, i.e., $\sigma \sigma^{\top}(t, x)$ in \eqref{generator} is uniformly positive definite on $[0, T] \times \R^d$. 
Below, we show that \eqref{eq_HJBPDE} covers many useful cases, including:
% Under the non-degeneracy condition and some usual conditions (see, e.g., \cb{\cite[section 6.2, Theorem 5]{Krylov1987Nonlinear})}, the equation \eqref{eq_HJBPDE} admits a classical solution $v \in C^{1, 2}$, where the regularity of $v$ is necessary for the martingale formulation.
% And, the non-degeneracy condition of the equation~\eqref{eq_HJBPDE} is general enough to cover many useful situations as follows.

\begin{itemize}
    \item {\bf SOCP~\eqref{eq_wscop} without volatility control.} If $\bar{\sigma}(t, x, \kappa) = \bar{\sigma}(t, x)$, the HJB equation \eqref{eq2_hjb} degenerates into a special case of \eqref{eq_HJBPDE} with
        \begin{equation*}
            \mathcal{L} = \frac{1}{2} \operatorname{Tr}\bbr{\bar{\sigma} \bar{\sigma}^{\top}\br{t, x} \partial_{xx}^2}, \quad H(t, x, \kappa,  z, p) = z^{\top} \bar{\mu}(t, x, \kappa) + c(t, x, \kappa)
        \end{equation*}
        for $(t, x, \kappa, z, p) \in [0, T]  \times \R^d  \times U \times \R^d \times \R^{d\times d}$.

    \item \cb{{\bf Controlled volatility $\bar{\sigma}$ with an uncontolled part.}
    If the $\bar{\sigma}$ admits a decomposition as $\bar{\sigma}(t, x, \kappa) = \bar{\sigma}_0(t, x) + \bar{\sigma}_1(t, x, \kappa)$
    for some $\R^{d\times q}$-valued functions $\bar{\sigma}_0$ and $\bar{\sigma}_1$, 
    then the HJB equation~\eqref{eq2_hjb} becomes a special case of \eqref{eq_HJBPDE} with
    \begin{equation*}
        \begin{aligned}
        &\mathcal{L} = \frac{1}{2} \operatorname{Tr}\bbr{\bar{\sigma}_0 \bar{\sigma}_0^{\top}\br{t, x} \partial_{xx}^2}, \\
        &H(t, x, \kappa, z, p) = \frac{1}{2} \operatorname{Tr}\rbr{p \Sigma(t, x, \kappa)}  + z^{\top} \bar{\mu}(t, x, \kappa) + c(t, x, \kappa),
        \\
        &\Sigma(t, x, \kappa) :=\bar{\sigma}_0(t, x) \bar{\sigma}_1^{\top}(t, x, \kappa) + \bar{\sigma}_1(t, x, \kappa) \bar{\sigma}_0^{\top}(t, x) +  \bar{\sigma}_1(t, x, \kappa) \bar{\sigma}_1^{\top}(t, x, \kappa)
        \end{aligned}
    \end{equation*}
    for $(t, x, \kappa, z, p) \in [0, T]  \times \R^d \times  U \times \R^d \times \R^{d\times d}$.}

    \item {\bf Knowledge of some preliminary approximation $u_0$ for the optimal control $u^*$.} In this case, the HJB equation~\eqref{eq2_hjb} can be rewritten into \eqref{eq_HJBPDE} with
          \begin{equation*}
              \mathcal{L} = \mathcal{L}^{u_0}, \quad H\br{t, x, \kappa,  \partial_x v(t, x), \partial_{xx}^2 v(t, x)} = \br{\mathcal{L}^{\kappa} - \mathcal{L}^{u_0}} v (t, x) + c(t, x, \kappa),
          \end{equation*}
          where
          \begin{equation*}
              \mathcal{L}^{\kappa} := \bar{\mu}^{\top}(t, x, \kappa) \partial_x + \frac{1}{2} \operatorname{Tr}\bbr{\bar{\sigma} \bar{\sigma}^{\top}\br{t, x, \kappa} \partial_{xx}^2} \sptext{for} \kappa \in U
          \end{equation*}
          with the convention $\mathcal{L}^{u} v(t, x) := \mathcal{L}^{u(t, x)} v(t, x)$.
    \item {\bf Explicit form of the optimal control in the Hamiltonian.}
        If the following function $\bar{H}$ is explicitly known:
        \begin{equation*}
            \bar{H}(t, x, z, p) := \inf_{u \in U} H\br{t, x, u,  z, p}, \quad (t, x, z, p) \in [0, T] \times \R^d  \times \R^{d\times d},
        \end{equation*}
        then \eqref{eq_HJBPDE} degenerates into a parabolic equation as
        \begin{equation}\label{eq_hjbbarH}
            \partial_t v(t, x) + \mathcal{L} v(t, x) + \bar{H}\br{t, x, \partial_x v(t, x), \partial_{xx}^2 v(t, x)}  = 0.
        \end{equation}
\end{itemize}
From the above discussions, the SOC-MartNet designed for the HJB-type equation \eqref{eq_HJBPDE} is applicable for the SOCP~\eqref{eq_wscop} with non-degenerated diffusion 
and the parabolic equation~\eqref{eq_hjbbarH} without boundary conditions.

\subsection{Martingale formulation for HJB-type equations}\label{sec_martformu}

Let $X: [0, T] \times \Omega \to \R^d$ be a uncontrolled diffusion process associated with the operator $\mathcal{L}$, i.e.,
\begin{equation}\label{eq_SDE}
    X_t = X_0 + \int_0^t \mu(s, X_s) \di s + \int_0^t \sigma(s, X_s) \di B_s, \quad t \in [0, T].
\end{equation}

\smallskip
\noindent{\bf Hamiltonian process $H_t^{u, v}$ and cost process $\M_t^{u, v}$ for $t \in [0, T]$.} Using the optimal control $u \in \mathcal{U}_{\mathrm{ad}}$ and the value function $v$, we define two processes -
a Hamiltonian process $H_t^{u, v}$, which will be used for enforcing the minimum principle \eqref{eq_uXargmin},
\begin{equation}
    H_t^{u, v} =H^{u, v}(t,X_t):= H\br{t, X_t, u(t, X_t),  \partial_x v(t, X_t), \partial_{xx}^2 v(t, X_t)}, \label{eq_defHt}
\end{equation}
and, a cost process $\M_t^{u, v}$, whose being a martingale implies that the value function $v(t, x)$ satisfies the HJB equation \eqref{eq2_hjb1},
\begin{equation}\label{eq_defMt}
    \M_t^{u, v}=\M^{u, v}(t,X_t) := v(t, X_t) + \int_0^t H_s^{u, v} \di s.
\end{equation}
Specifically, we aim at finding a set of sufficient conditions on $\M^{u, v}$ and $H^{u, v}$, under which $v$  satisfies the HJB-type equation~\eqref{eq_HJBPDE}, and $u$ is the optimal feedback control in sense of \eqref{eq_uXargmin}, as mentioned in the comment after \eqref{eq2_hjb1}.

Recalling \eqref{eq_uXargmin}, we assume that the uncontrolled diffusion $X_t$ can explore the whole support set of $X_t^*$ (see Remark~\ref{rmk_suppXt}), i.e.,
\begin{equation}\label{eq_inclusionCond}
   \Gamma_t= \Gamma(X_t)  \supset \Gamma(X_t^*), \quad t \in [0, T].
\end{equation}
Then, to establish \eqref{eq_uXargmin}, it is sufficient to consider the condition
\begin{equation}\label{eq_HtinfH}
    H_t^{u, v}= H^{u, v}(t,X_t) = \inf_{\kappa \in U} H\br{t, X_t, \kappa, \partial_x v(t, X_t), \partial_{xx}^2 v(t, X_t)}, \;\; t \in [0, T].
\end{equation}

\begin{remark} ({\bf State spaces of controlled and uncontrolled diffusion}) \label{rmk_suppXt}
    For the SOCP~\eqref{eq_wscop}, we introduce the following two strategies to meet the inclusion condition \eqref{eq_inclusionCond}:
    \begin{itemize}
        \item We randomly take the starting point $X_0$ of the uncontrolled diffusion $X$ such that the distribution of $X_0$ covers a neighborhood of $X_0^* = x_0$ in \eqref{eq_stateq}, e.g., $X_0 \sim N(x_0, r I_d)$ with $r > 0$ a hyper parameter.

        \item The uncontrolled diffusion can be taken as $X^{u_0}$ given by \eqref{eq_stateq} with $u_0$ as an initial approximation of $u^*$.
        Then, we turn to solve the following equation equivalent to \eqref{eq2_hjb}:
        \begin{equation*}
            \br{\partial_t + \mathcal{L}^{u_0}} v(t, x) + \inf_{\kappa \in U} \bbr{(\mathcal{L}^{\kappa} - \mathcal{L}^{u_0}) v(t, x) + c(t, x, \kappa)} = 0
        \end{equation*}
        for $(t, x) \in [0, T) \times \R^d$,
        for which the Hamiltonian process in \eqref{eq_defHt} is given by
        \begin{equation*}
            H_t^{u, v} = \br{\mathcal{L}^{u} - \mathcal{L}^{u_0}} v(t, X_t^{u_0}) + c\br{t, X_t^{u_0}, u(t, X_t^{u_0})}, \quad t \in [0, T].
        \end{equation*}
    \end{itemize}
\end{remark}

The condition \eqref{eq_HtinfH} will be enforced as part of the training for the value and control neural network, but, in order to avoid point-wise application of this condition in a time-space $(t,x)$ region, which is a source of CoD issue, we will reformulate the condition into an integral form in the following lemma, which can be sampled in the fashion of a Monte Carlo method for an empirical loss function.

\begin{lemma}\label{lemm_infH}
    Let $v$ be any Borel measurable function from $[0, T] \times \R^d $ to $\R$ such that
    \begin{equation}\label{eq_intEabsinfH}
        \int_0^T \E{\abs{\inf_{\kappa \in U} H\br{t, X_t, \kappa,  \partial_x v(t, X_t), \partial_{xx}^2 v(t, X_t)}}} \di t < \infty.
    \end{equation}
    Assume the optimal feedback control exists under $v$, i.e., the following equation~\eqref{eq_Htuv} admits a solution $u \in \mathcal{U}_{\mathrm{ad}}$:
    \begin{equation}\label{eq_Htuv}
        H_t^{u, v} = \inf_{\kappa \in U} H\br{t, X_t, \kappa,  \partial_x v(t, X_t), \partial_{xx}^2 v(t, X_t)}
    \end{equation}
    for $(t, \omega) \in [0, T] \times \Omega$, a.e.-$\di t \times \mathbb{P}$.
    Then, an optimal control $u$ for ~\eqref{eq_Htuv} can be found from the following minimization problem:
    finding a $u \in \mathcal{U}_{\mathrm{ad}}$ such that
    \begin{equation}\label{eq_Mt_ut2}
        \int_0^T \E{H_t^{u, v}}\di t = \inf_{\bar{u} \in \mathcal{U}_{\mathrm{ad}}} \int_0^T \E{H_t^{\bar{u}, v}}\di t.
    \end{equation}
\end{lemma}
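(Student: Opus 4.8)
The plan is to reduce everything to one pointwise inequality combined with the elementary fact that a nonnegative integrable function with vanishing integral is zero almost everywhere. Throughout, abbreviate the pointwise minimized Hamiltonian by $\underline{H}_t := \inf_{\kappa \in U} H\br{t, X_t, \kappa, \partial_x v(t, X_t), \partial_{xx}^2 v(t, X_t)}$. The starting point is that, directly from the definition of the infimum, every admissible control $\bar{u} \in \mathcal{U}_{\mathrm{ad}}$ satisfies the pointwise bound $H_t^{\bar{u}, v} \ge \underline{H}_t$ for all $(t, \omega) \in [0, T] \times \Omega$. Taking expectations and integrating in $t$ yields the uniform lower bound
\[
    \int_0^T \E{H_t^{\bar{u}, v}} \di t \;\ge\; \int_0^T \E{\underline{H}_t} \di t \;=:\; I_*,
\]
where $I_* \in \R$ is finite by the integrability hypothesis \eqref{eq_intEabsinfH} and does not depend on $\bar{u}$. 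Since, by assumption, some $u \in \mathcal{U}_{\mathrm{ad}}$ solves \eqref{eq_Htuv}, for that $u$ the bound is saturated a.e., whence $\int_0^T \E{H_t^{u, v}} \di t = I_*$. This already shows that the infimum in \eqref{eq_Mt_ut2} equals $I_*$ and is attained, so the minimization problem \eqref{eq_Mt_ut2} is well posed and solvable.

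The substantive content is the converse direction, namely that \emph{any} minimizer of \eqref{eq_Mt_ut2} must solve \eqref{eq_Htuv}. Let $u \in \mathcal{U}_{\mathrm{ad}}$ attain the infimum in \eqref{eq_Mt_ut2}; by the previous paragraph its value is $I_*$, so $\int_0^T \E{H_t^{u, v}} \di t = I_*$. The process $\Delta_t := H_t^{u, v} - \underline{H}_t$ is nonnegative by the pointwise bound, and
\[
    \int_0^T \E{\Delta_t} \di t = \int_0^T \E{H_t^{u, v}} \di t - I_* = 0 .
\]
A nonnegative integrable function with zero integral (here with respect to $\di t \times \mathbb{P}$ on $[0, T] \times \Omega$) vanishes a.e., so $\Delta_t = 0$ for a.e.\ $(t, \omega)$, which is exactly \eqref{eq_Htuv}. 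Combined with the first paragraph, this identifies the solution sets of \eqref{eq_Mt_ut2} and \eqref{eq_Htuv} up to $\di t \times \mathbb{P}$-null sets.

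I expect no deep obstacle; the care is purely measure-theoretic. Measurability of $(t,\omega) \mapsto \underline{H}_t$ is not an issue, since the hypothesis furnishes a measurable $u$ with $\underline{H}_t = H_t^{u, v}$ a.e., and the right-hand side is measurable as a composition of the measurable maps $u$, $X$, and $H$; thus $I_*$ and $\Delta_t$ are legitimate integrands without any appeal to a measurable-selection theorem. The integrability assumption \eqref{eq_intEabsinfH} is precisely what makes $I_*$ finite, licenses subtracting it in the definition of $\Delta_t$, and permits the ``zero integral forces zero a.e.'' conclusion for the integrable nonnegative process $\Delta$. Notably, no convexity, compactness, or continuity of $\kappa \mapsto H$ enters this equivalence; such structure would be needed only to establish existence of the selection $u$, which is assumed here rather than proved.
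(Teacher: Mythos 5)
Your proposal is correct and takes essentially the same route as the paper's own proof: existence of a minimizer follows by integrating \eqref{eq_Htuv}, and conversely any minimizer $u$ makes the nonnegative gap process $H_t^{u,v} - \inf_{\kappa \in U} H$ have vanishing $\di t \times \mathbb{P}$-integral, hence vanish a.e. Your extra remarks (finiteness of $I_*$ from \eqref{eq_intEabsinfH} and measurability of the pointwise infimum via the assumed selection) merely make explicit details the paper leaves implicit.
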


\begin{proof}
    The existence of solution $u$ for the minimization problem \eqref{eq_Mt_ut2} is evident by taking $\int_0^T \E{\,\cdot\,} \di t$ on both sides of \eqref{eq_Htuv}.

   Next, recalling the definition in \eqref{eq_HtinfH}, we assume that there is a $\bar u \in \mathcal{U}_{\mathrm{ad}}$  as an optimal feedback control satisfying \eqref{eq_Htuv}, i.e.,
    \begin{equation}\label{eq_Hu1v}
        H_t^{\bar u, v} = \inf_{\kappa \in U} H\br{t, X_t, \kappa,  \partial_x v(t, X_t), \partial_{xx}^2 v(t, X_t)}
    \end{equation}
    for $(t, \omega) \in [0, T] \times \Omega$, a.e.-$\di t \times \mathbb{P}$.
    Then, if $u \in \mathcal{U}_{\mathrm{ad}}$ satisfying \eqref{eq_Mt_ut2}, then by its definition, we have that $\int_{0}^T \E{H_t^{u, v}} \di t \leq \int_{0}^T \E{H_t^{\bar u, v}} \di t$.
    Inserting \eqref{eq_Hu1v} into the right-side of this inequality, and further using \eqref{eq_intEabsinfH}, we obtain
    % Further
    % \begin{align*}
    %     &\int_{0}^T \E{H_t^{u, v}} \di t \\
    %     \leq\;& \int_{0}^T \E{H_t^{\bar u, v}} \di t = \int_{0}^T  \E{\inf_{\kappa \in U} H\br{t, X_t, \kappa,  \partial_x v(t, X_t), \partial_{xx}^2 v(t, X_t)}} \di t,
    % \end{align*}
    % where the equality is obtained by taking $\int_0^T \E{\cdot} \di t$ on both sides of \eqref{eq_Hu1v}.
    % Combining the above equation and \eqref{eq_intEabsinfH}, we obtain
    $$\int_{0}^T \E{\varepsilon_t} \di t \leq 0,$$ where
    \begin{equation*}
        \varepsilon_t = \varepsilon(t,X_t) := H_t^{u, v} - \inf_{\kappa \in U} H\br{t, X_t, \kappa,  \partial_x v(t, X_t), \partial_{xx}^2 v(t, X_t)}.
    \end{equation*}
    On the other hand, the definition of $\varepsilon(t,X_t)$ implies that $\varepsilon_t $ is a nonnegative function of $(t,X_t)$,  namely, $\varepsilon_t \geq 0$ for any $t \in [0, T]$.
    Thus it follows from \cite[Theorem 4.8 (i)]{Achim2020Probability} that
    \begin{equation*}
        \varepsilon_t = 0 \sptext{for} (t, \omega) \in [0, T] \times \Omega, \;\; \text{a.e.-}\di t \times \mathbb{P},
    \end{equation*}
    which implies an optimal control $u$ for \eqref{eq_Htuv}.
\end{proof}

\begin{remark}
    ({\bf Integral form of minimum principle and avoiding the CoD problem in $(t,x)$ variables}) \Cref{lemm_infH} allows us to replace the minimum principle in \eqref{eq_Htuv} in a point-wise $(t, X_t)$ sense by an integral version of the minimum principle defined in \eqref{eq_Mt_ut2}.
    The latter offers significant computational benefits, as the minimization is for a functional (defined by a double integral) on the admissible control set $\mathcal{U}_{\mathrm{ad}}$ only, instead of the Hamiltonian on the control space $U$ for each possible time-state $(t, X_t)$.
    More importantly, the high dimensional double integral in $(t, x)$ can be carried out in in the space-time domain in a Monte Carlo fashion, thus avoiding the CoD problem from the high dimension of $X_t$ and allowing independent sampling of $t$ and $x$ variables for an efficient parallel implementation.
\end{remark}

Utilizing the condition~\eqref{eq_Htuv}, we can simplify the HJB-type equation~\eqref{eq_HJBPDE} into
\begin{equation}\label{eq_ptLeqH}
    \br{\partial_t + \mathcal{L}} v(t, X_t) = - H_t^{u, v}(t,X_t)
\end{equation}
for $t \in [0, T]$.
Next, we will show that \eqref{eq_ptLeqH} can be fulfilled by enforcing the cost process $\M^{u, v}$ in \eqref{eq_defMt} to be a martingale with the value function $v$ and the optimal control $u$.
The following lemma presents the details.

\begin{lemma}\label{lemm_mart}
    For any $(u, v) \in \mathcal{U}_{\mathrm{ad}} \times C^{1, 2}$ satisfying
    \begin{equation}\label{eq_cond_mart_hjb}
        \int_0^T \E{\abs{\partial_x v \sigma \br{t, X_t}}^2} \di t < \infty, \;\;\; \int_0^T\E{\abs{H_t^{u, v}}^2} \di t < \infty, \;\;\; \E{\abs{v(T, X_T)}^2} < \infty,
    \end{equation}
    the equation \eqref{eq_ptLeqH} holds for $(t, \omega) \in [0, T] \times \Omega$ a.e.-$\di t \times \mathbb{P}$ if and only if $M_t^{u, v}(t,X_t)$ is a $\mathbb{F}^B$-martingale, i.e.,
    \begin{equation}\label{eq_Mt_ut}
        \M_t^{u, v} = \E{\M_T^{u, v} \vert \mathcal{F}_t}, \quad t \in [0, T].
    \end{equation}
\end{lemma}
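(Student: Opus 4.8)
The plan is to apply It\^o's formula to $v(t, X_t)$, read off a semimartingale decomposition of the cost process $\M^{u,v}$, and then exploit the uniqueness of such a decomposition into a drift and a martingale part. Since $v \in C^{1,2}$ and $X$ solves \eqref{eq_SDE}, It\^o's formula gives
\begin{equation*}
    v(t, X_t) = v(0, X_0) + \int_0^t \br{\partial_t + \mathcal{L}} v(s, X_s) \di s + \int_0^t \partial_x v \sigma(s, X_s) \di B_s,
\end{equation*}
where the drift collects exactly $\partial_t v + \mu^{\top} \partial_x v + \frac{1}{2}\operatorname{Tr}\br{\sigma \sigma^{\top} \partial_{xx}^2 v} = \br{\partial_t + \mathcal{L}} v$ from \eqref{generator}. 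Adding $\int_0^t H_s^{u,v} \di s$ and recalling \eqref{eq_defMt}, I obtain $\M_t^{u,v} = v(0, X_0) + A_t + N_t$ with
\begin{equation*}
    A_t := \int_0^t \bbr{\br{\partial_t + \mathcal{L}} v(s, X_s) + H_s^{u,v}} \di s, \qquad N_t := \int_0^t \partial_x v \sigma(s, X_s) \di B_s,
\end{equation*}
so that $A$ is a continuous adapted process of finite variation with $A_0 = 0$ and $N$ is an It\^o integral with $N_0 = 0$.

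Next I would check that the three integrability hypotheses in \eqref{eq_cond_mart_hjb} serve their intended purposes. The first, $\int_0^T \E{\abs{\partial_x v \sigma(t, X_t)}^2} \di t < \infty$, puts the integrand of $N$ in $L^2$, so $N$ is a genuine square-integrable $\mathbb{F}^B$-martingale rather than merely a local one. The second together with the Cauchy--Schwarz bound $\E{\abs{\int_0^T H_s^{u,v} \di s}^2} \le T \int_0^T \E{\abs{H_s^{u,v}}^2} \di s$, and the third, guarantee that $\M_T^{u,v} \in L^2(\Omega)$, so the conditional expectations in \eqref{eq_Mt_ut} are well defined; combined with $v(0,X_0) = \M_T^{u,v} - A_T - N_T$ this also yields $v(0,X_0) \in L^2$.

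The two implications then follow from this decomposition. If \eqref{eq_ptLeqH} holds a.e.-$\di t \times \mathbb{P}$, the integrand of $A$ vanishes, hence $A_t \equiv 0$ and $\M_t^{u,v} = v(0, X_0) + N_t$ is a martingale, since adding the $\mathcal{F}_0$-measurable, integrable $v(0, X_0)$ preserves the martingale property. Conversely, if $\M^{u,v}$ is a martingale, then $A_t = \M_t^{u,v} - v(0, X_0) - N_t$ is a difference of martingales, hence itself a continuous martingale; being also of finite variation with $A_0 = 0$, it has vanishing quadratic variation, and a continuous local martingale with zero quadratic variation is a.s. constant, so $A_t \equiv 0$. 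Since $A_t = \int_0^t \br{\cdot} \di s$ for every $t$, the integrand vanishes a.e.-$\di t \times \mathbb{P}$, which is precisely \eqref{eq_ptLeqH}.

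The crux is the converse direction, namely the fact that a continuous martingale of finite variation must be constant. This is where the $L^2$ hypotheses are indispensable: they upgrade $N$ from a local to a true martingale, so that $\M^{u,v} = v(0,X_0) + A + N$ is a bona fide decomposition into finite-variation and martingale parts, and the essential uniqueness of such a decomposition (equivalently, the vanishing quadratic variation of a continuous finite-variation process) forces $A \equiv 0$. A minor technical point I would address is the $\mathcal{F}_0$-measurability of $v(0, X_0)$ when $X_0$ is randomized as in \Cref{rmk_suppXt}, which is handled by enlarging the filtration at time $0$ to contain $\sigma(X_0)$.
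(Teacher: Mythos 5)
Your proposal is correct, and its forward direction coincides with the paper's: insert \eqref{eq_ptLeqH} into the It\^o expansion of $v(t,X_t)$, so that $\M^{u,v} = v(0,X_0) + N$ with $N$ a square-integrable martingale by the first condition in \eqref{eq_cond_mart_hjb}. The converse, however, takes a genuinely different (and leaner) route. The paper first applies the martingale representation theorem to write $\M_t^{u,v} = \M_0^{u,v} + \int_0^t Z_s \di B_s$, and only then forms the process $Q_t = \int_0^t \{(\partial_t + \mathcal{L})v(s,X_s) + H_s^{u,v}\}\di s = \int_0^t \{\partial_x v\sigma(s,X_s) - Z_s\}\di B_s$, which is simultaneously of finite variation and a continuous (local) martingale null at $0$, hence identically zero by \cite[Proposition 1.1.9]{Pham2009Continuous}. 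You bypass the representation theorem entirely: since $\M^{u,v}$ is a martingale by hypothesis \eqref{eq_Mt_ut} and $N$ is a true martingale by the first $L^2$ condition, your $A = \M^{u,v} - v(0,X_0) - N$ (which equals the paper's $Q$) is a difference of martingales, hence itself a continuous finite-variation martingale null at $0$, and the same uniqueness-of-decomposition fact forces $A \equiv 0$. What each buys: the paper's route makes the $Z$-integrand explicit, which is in the spirit of BSDE-based formulations; yours is more elementary and slightly more general, since the martingale representation theorem needs the filtration to be the (augmented) Brownian one, whereas the difference-of-martingales argument works for any filtration to which $X$ and $B$ are adapted. One small repair: your justification that $v(0,X_0) \in L^2$ via $v(0,X_0) = \M_T^{u,v} - A_T - N_T$ presumes $A_T \in L^2$, which is not known a priori; it is cleaner to note $v(0,X_0) = \M_0^{u,v}$ by \eqref{eq_defMt}, which is integrable in the forward direction because there $A \equiv 0$, and in the converse direction because $\M_0^{u,v} = \E{\M_T^{u,v}\vert\mathcal{F}_0}$ with $\M_T^{u,v} \in L^2$.
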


\begin{proof}

    For $v \in C^{1, 2}$, the It\^o formula implies that for $t \in [0, T]$,
    \begin{equation}\label{eq_v_decom2}
        v(t, X_t) = v(0, X_0) + \int_0^t \br{\partial_t + \mathcal{L}} v(s, X_s) \di s + \int_0^t \partial_x v \sigma \br{s, X_s} \di B_s.
    \end{equation}

    $\eqref{eq_ptLeqH} \Rightarrow \eqref{eq_Mt_ut}$:
    Inserting \eqref{eq_ptLeqH} into the above equation and further using the definition of \eqref{eq_defMt}, we obtain
    \begin{equation*}
        \M_t^{u, v} = v(t, X_t) + \int_0^t H^{u, v}(s,X_s) \di s = v(0, X_0) + \int_0^t \partial_x v \sigma \br{s, X_s} \di B_s, \quad t \in [0, T],
    \end{equation*}
    Then $M_t^{u, v}(t,X_t)$ is a martingale from the above equation combined with the first condition in \eqref{eq_cond_mart_hjb}, thus $\eqref{eq_Mt_ut}$ holds.

    $\eqref{eq_Mt_ut} \Rightarrow \eqref{eq_ptLeqH}$:
    Recalling again the definition in \eqref{eq_defMt}, the last two conditions in \eqref{eq_cond_mart_hjb} implies that $\mathbb{E}[\abs{\M_T^{u, v}}^2] < \infty$.
    Then by the martingale representation theorem \cite[Theorem 1.2.9]{Pham2009Continuous}, there exists a $\mathbb{F}^B$-adapted process $Z: [0, T] \times \Omega \to \R^q$ such that
    \begin{equation*}
        \E{\int_0^T \abs{Z_s}^2 \di s} < +\infty, \quad \M_t^{u, v} = \M_0^{u, v} + \int_0^t Z_s \di B_s, \quad t \in [0, T].
    \end{equation*}
    By using the definition $ \M_t^{u, v}$ in \eqref{eq_defMt} above, we then have
    \begin{equation}\label{eq_v_decom1}
        v(t, X_t) = v(0, X_0) - \int_0^t H^{u, v}(s,X_s) \di s + \int_0^t Z_s \di B_s.
    \end{equation}
    Combining \eqref{eq_v_decom2} and \eqref{eq_v_decom1}, we have that
    \begin{equation*}
        Q_t := \int_0^t \bbr{\br{\partial_t + \mathcal{L}} v(s, X_s) + H^{u, v}(s,X_s)} \di s = \int_0^t \bbr{\partial_x v \sigma \br{s, X_s} - Z_s} \di B_s
    \end{equation*}
    for $t \in [0, T]$,
    which means that $Q$ is a $\mathbb{F}^B$-adapted process with finite variation,
    and is also a continuous martingale with $Q_0 = 0$.
    Thus it follows from \cite[Proposition 1.1.9]{Pham2009Continuous} that $Q_t = 0$ for any $t \in [0, T]$, a.s., which validates \eqref{eq_ptLeqH}.
\end{proof}

\Cref{lemm_infH,lemm_mart} directly lead to the following theorem,
which gives the martingale formulation~\eqref{eq_martformu} for the HJB-type equation~\eqref{eq_HJBPDE}.

\begin{theorem}\label{thm_martform}
    Assume $(u, v) \in \mathcal{U}_{\mathrm{ad}} \times C^{1, 2}$ satisfies \eqref{eq_intEabsinfH} and \eqref{eq_cond_mart_hjb}.
    % and moreover,  $v$ satisfies the HJB equation \eqref{eq2_hjb} and $u$  satisfies \eqref{eq_Htuv}.
    Then, an optimal feedback control $u$ satisfying \eqref{eq_Htuv}, and the value function $v$ satisfying \eqref{eq_HJBPDE} for $t \in [0, T]$ and $x \in \Gamma(X_t)$, can be found by the following two conditions,
    \begin{equation}\label{eq_martformu}
        \int_0^T \E{H_t^{u, v}}\di t = \inf_{\bar{u} \in \mathcal{U}_{\mathrm{ad}}} \int_0^T \E{H_t^{\bar{u}, v}}\di t, \quad \M_t^{u, v} = \E{\M_T^{u, v} \vert \mathcal{F}_t}, \quad t \in [0, T],
    \end{equation} 
    where $M_t^{u, v}$ and $H_t^{u, v}$ are given by \eqref{eq_defHt} and \eqref{eq_defMt}, respectively.
\end{theorem}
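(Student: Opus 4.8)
The plan is to obtain the theorem as a direct synthesis of \Cref{lemm_infH,lemm_mart}, since the two identities in \eqref{eq_martformu} are exactly the hypotheses of those two lemmas applied to the same pair $(u,v)$. First I would invoke \Cref{lemm_infH}: the first identity in \eqref{eq_martformu} is precisely the integral minimization problem \eqref{eq_Mt_ut2}, and the standing integrability \eqref{eq_intEabsinfH} is assumed. Hence \Cref{lemm_infH} yields that $u$ solves the pointwise minimum principle \eqref{eq_Htuv}, i.e.
\begin{equation*}
    H_t^{u, v} = \inf_{\kappa \in U} H\br{t, X_t, \kappa, \partial_x v(t, X_t), \partial_{xx}^2 v(t, X_t)}
\end{equation*}
for $(t, \omega) \in [0, T] \times \Omega$, a.e.-$\di t \times \mathbb{P}$. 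This already establishes the optimal-feedback-control half of the claim.

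Next I would invoke \Cref{lemm_mart}: the second identity in \eqref{eq_martformu} is the martingale property \eqref{eq_Mt_ut}, and the integrability \eqref{eq_cond_mart_hjb} is assumed, so the lemma gives the equivalent reduced equation \eqref{eq_ptLeqH}, namely $\br{\partial_t + \mathcal{L}} v(t, X_t) = - H_t^{u, v}$ a.e.-$\di t \times \mathbb{P}$. Substituting the identity from the previous step for $H_t^{u, v}$ then collapses the two conclusions into the single relation
\begin{equation*}
    \br{\partial_t + \mathcal{L}} v(t, X_t) + \inf_{\kappa \in U} H\br{t, X_t, \kappa, \partial_x v(t, X_t), \partial_{xx}^2 v(t, X_t)} = 0,
\end{equation*}
valid a.e.-$\di t \times \mathbb{P}$, which is exactly the HJB-type equation \eqref{eq_HJBPDE} evaluated along the uncontrolled diffusion $X_t$.

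The remaining step, and the one I expect to be the genuine obstacle, is upgrading this ``along-the-path'' identity to the pointwise statement that \eqref{eq_HJBPDE} holds for every $t \in [0, T]$ and every $x \in \Gamma(X_t)$. Writing $F(t, x)$ for the left-hand side of \eqref{eq_HJBPDE}, the previous step delivers $F(t, X_t) = 0$ for $\di t \times \mathbb{P}$-a.e. $(t, \omega)$; by definition of the law of $X_t$ this means that, for a.e. $t$, $F(t, \cdot)$ vanishes almost everywhere with respect to the distribution of $X_t$. To convert ``a.e. with respect to the law'' into ``everywhere on the topological support $\Gamma(X_t)$'', I would use the continuity of $F$ in $(t, x)$, which follows from $v \in C^{1, 2}$ together with the continuity of the coefficients $\mu, \sigma$ and of the map $\kappa \mapsto H$; the latter is what renders $x \mapsto \inf_{\kappa \in U} H$ continuous (for instance when $U$ is compact, so that the infimum is attained and depends continuously on $(t,x)$). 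A continuous function vanishing a.e. against a measure must vanish on the support of that measure, and continuity in $t$ then removes the remaining null set of times. Finally, the inclusion condition \eqref{eq_inclusionCond}, $\Gamma(X_t) \supset \Gamma(X_t^*)$, ensures the equation is certified on a region containing the support of the optimal state process, which is precisely what the verification argument behind \eqref{eq_uXargmin} requires; I would flag the continuity of the Hamiltonian infimum as the only place where a hypothesis beyond the two lemmas is implicitly needed.
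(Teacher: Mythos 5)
Your proposal matches the paper's own treatment: the paper gives no separate proof of this theorem, stating only that \Cref{lemm_infH,lemm_mart} ``directly lead to'' it, which is exactly your first two steps (integral minimization $\Rightarrow$ pointwise condition \eqref{eq_Htuv} via \Cref{lemm_infH}; martingale property $\Rightarrow$ reduced equation \eqref{eq_ptLeqH} via \Cref{lemm_mart}; substitute to recover \eqref{eq_HJBPDE} along $X_t$). Your third step---upgrading the a.e.-$\di t \times \mathbb{P}$ identity along the path to a pointwise statement for $x \in \Gamma(X_t)$ via continuity of $(t,x) \mapsto \inf_{\kappa \in U} H$---is a correct filling-in of a detail the paper leaves implicit, and your flag that this requires a continuity hypothesis (e.g., compact $U$) beyond what the two lemmas supply is an accurate observation about an unstated assumption.
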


\begin{remark}\label{rmk_ord1}
    \cb{
    A unique feature of the martingale formulation~\eqref{eq_martformu} is that it works with expectations and conditional expectations associated with the diffusion process $X$, rather than its pathwise properties. 
    These expectations can be approximated by applying the Euler-Maruyama scheme \cite[Proposition 5.11.1]{Kloeden1992Numerical} to \eqref{eq_SDE}, which achieves a first-order convergence rate in the weak sense (see the numerical results in \cref{sec_testcr}). 
    In contrast, related works such as \cite{weinan2017deep,han2018solving,raissi2018forwardbackward,hure2020deep,Zhou2021Actor,Zhang2022FBSDE,Ji2022Solving} leverage the pathwise properties of $X$. 
    Consequently, their method depend on the strong convergence rate of the Euler-Maruyama scheme, which is of order $1/2$, as analyzed in \cite{han2018convergence,hure2020deep,Germain2022Approximation} and demonstrated numerically in \cite{Zhang2022FBSDE}.
    }
\end{remark}

Now based on \cref{thm_martform},
the key issue is to fulfill the minimum condition and the martingale condition in \eqref{eq_martformu}, to be achieved by the SOC-MartNet algorithm.

\subsection{SOC-MartNet via adversarial learning for control/value functions}

Under some Lipschitz continuity and boundedness conditions on $\mu$ and $\sigma$, the uncontrolled diffusion $X$ given by \eqref{eq_SDE} is a Markov process relative to $\mathbb{F}^B$ \cite[Theorem 17.2.3]{Samuel2015Stochastic},
which, recalling \eqref{eq_defMt}, implies that
\begin{equation*}
    \E{\M_T^{u, v} \vert \mathcal{F}_t} =  \E{\M_T^{u, v} \vert X_t}, \quad t \in [0, T].
\end{equation*}
Thus the martingale condition in \eqref{eq_martformu} can be simplified into
\begin{equation}\label{eq_martMtXt}
    \M_t^{u, v} = \E{\M_T^{u, v} \vert X_t}, \quad t \in [0, T].
\end{equation}
To avoid computing the conditional expectation $\E{\,\cdot\,\vert X_t}$ as in the original DeepMartNet \cite{cai2023deepmartnet,cai2023deepmartnet2}, we modify the martingale condition in \eqref{eq_martMtXt} into
\begin{equation}\label{eq_suprho}
    \sup_{\rho \in \mathcal{T}} \abs{\int_0^{T-\Delta t} \E{\rho(t, X_t) \br{\M_{t+\Delta t}^{u, v} - \M_t^{u, v}}} \di t}^2 = 0,
\end{equation}
where $\mathcal{T}$ denotes the set of test functions, defined by
\begin{equation*}
    \mathcal{T} := \bbr{\rho: [0, T] \times \R^d \to \R \big\vert\; \text{$\rho$ is smooth and bounded}},
\end{equation*}
and $\Delta t \in (0, T)$ is the time step size.
\cb{Here $\rho$ in $\mathcal{T}$ is a general test function.} For sufficiently small $\Delta t$, the condition \eqref{eq_suprho} ensures the martingale condition in \eqref{eq_martMtXt}.
Actually, by the property of conditional expectations, it holds that
\begin{equation}\label{eq_rhoDelMt}
    \E{\rho(t, X_t)(\M_{t+\Delta t}^{u, v} - \M_t^{u, v})} = \mathbb{E}\Big[\rho(t, X_t) \E{(\M_{t+\Delta t}^{u, v} - \M_t^{u, v}) \vert X_t}\Big]
\end{equation}
for $t \in [0, T - \Delta t]$.
Inserting \eqref{eq_rhoDelMt} into \eqref{eq_suprho}, we have that
\begin{equation*}
    \int_0^{T-\Delta t} \mathbb{E}\Big[\rho(t, X_t) \E{(\M_{t+\Delta t}^{u, v} - \M_t^{u, v}) \vert X_t}\Big] \di t = 0 \sptext{for all} \rho \in \mathcal{T},
\end{equation*}
where $\E{(\M_{t+\Delta t}^{u, v} - \M_t^{u, v}) \vert X_t}$ is a deterministic and Borel measurable function of $(t, X_t)$ \cite[Corollary 1.97]{Achim2020Probability}, and thus,
\begin{equation}\label{eq_martform}
    \E{(\M_{t+\Delta t}^{u, v} -  \M_t^{u, v}) \vert X_t} = 0, \;\;\; (t, \omega) \in [0, T - \Delta t] \times \Omega, \;\;\; \text{a.e.-}\di t \times \mathbb{P}.
\end{equation}
The above conditions imply that $\M^{u, v}$ satisfies the martingale condition in \eqref{eq_martMtXt} approximately for sufficiently small $\Delta t$.

A unique feature of \eqref{eq_suprho} lies in its natural connection to adversarial learning \cite{Zang2020Weak}, based on which, we can fulfill the conditions \eqref{eq_term} and \eqref{eq_martformu} by
\begin{equation}\label{eq_conopt}
    (u, v) = \lim_{\lambda \rightarrow +\infty} \argmin_{\br{\bar{u}, \bar{v}} \in \mathcal{U}_{\mathrm{ad}} \times \mathcal{V}} \bbr{\sup_{\rho \in \mathcal{T} } \mathbb{L}(\bar{u}, \bar{v}, \rho, \lambda)}
\end{equation}
where \cb{$u \in \mathcal{U}_{\mathrm{ad}}$} given in \eqref{eq_defUad}, and \cb{$v \in \mathcal{V}$} the set of candidate value functions satisfying \eqref{eq_term}, i.e.,
\begin{equation}\label{eq_defV}
    \mathcal{V} := \bbr{v: [0, T] \times \R^d \to \R  \big\vert\; v \in C^{1, 2}, \; v(T, x) = g(x),\; \forall x \in \R^d},
\end{equation}
and $\mathbb{L}$ is the augmented Lagrangian defined by
\begin{equation}\label{eq_defLag}
    \mathbb{L}(u, v, \rho, \lambda) := \int_0^T \E{H_t^{u, v}}\di t + \lambda \abs{\int_0^{T-\Delta t} \E{\rho(t, X_t) \br{\M_{t+\Delta t}^{u, v} - \M_t^{u, v}}} \di t}^2
\end{equation}
with $\lambda$ the multiplier being sufficiently large.

For adversarial learning, we replace the functions $u$, $v$ and $\rho$ by the control network $u_{\alpha}: [0, T] \times \R^d \to U$, the value network $v_{\theta}: [0, T] \times \R^d \to \R$ and the adversarial network $\rho_{\eta}: [0, T] \times \R^d \to \R^{r}$ parameterized by $\alpha$, $\theta$ and $\eta$, respectively.
Since the range of $u_{\alpha}$ should be restricted in the control space $U$, if $U = [a, b] := \prod_{i=1}^m [a_i, b_i]$ with $a_i, b_i$ the $i$-th elements of $a, b \in \R^m$,
the structure of $u_{\alpha}$ can be
\begin{equation}\label{eq_defualp}
    u_{\alpha}(t, x) = a + \frac{b - a}{6}\mathrm{ReLU6}(\psi_{\alpha}(t, x)),  \quad (t, x) \in [0, T] \times \R^d,
\end{equation}
where $\mathrm{ReLU6}(y) := \min\{\max\{0, y\}, 6\}$ is an activation function and $\psi_{\alpha}: [0, T] \times \R^d \to \R^m$ is a neural network with parameter $\alpha$.
Remark~\ref{rmk_generalU} provides a penalty method to deal with general control spaces.
To fulfill the terminal condition in \eqref{eq_defV},
the value network $v_{\theta}$ takes the form of
\begin{equation*}
    v_{\theta}(T, x) := g(x), \quad v_{\theta}(t, x) := \phi_{\theta}(t, x) \sptext{for} t \in [0, T), \;\; x\in \R^d,
\end{equation*}
% $v_{\theta}(t, x) = \phi_{\theta}(t, x)$ for $0 \leq t < T$ and $v_{\theta}(t, x) = g(x)$ for $t = T$,
where $\phi_{\theta}: [0, T] \times \R^d \to \R$ a neural network parameterized by $\theta$.

The adversarial network $\rho_{\eta}$ plays the role of test functions.
By our experiment results, $\rho_{\eta}$ is not necessarily to be very deep, but instead, it can be a shallow network with enough output dimensionality.
A typical example is that
\begin{equation}\label{eq_rho_sin}
    \rho_{\eta}(t, x) = \sin\br{W_1 t + W_2 x + b} \in \R^r, \quad \eta := (W_1, W_2, b) \in \R^r \times \R^{r\times d} \times \R^r
\end{equation}
for $(t, x) \in [0, T] \times \R^d$, where $\sin(\cdot)$ is the activation function applied on $W_1 t + W_2 x + b$ in an element-wise manner.

\medskip
\noindent {\bf SOC-MartNet.} Based on \eqref{eq_conopt} and \eqref{eq_defLag} with $(u_{\alpha}, v_{\theta}, \rho_{\eta})$ in place of $(u, v, \rho)$, the solution $(u, v)$ of \eqref{eq_conopt} can be approximated by $(u_{\alpha^*}, v_{\theta^*})$ given by
\begin{equation}\label{eq_argmintheta}
    (\alpha^*, \theta^*) = \lim_{\lambda \to +\infty} \argmin_{\alpha, \theta} \bbr{\max_{\eta} L(\alpha, \theta, \eta, \lambda)},
\end{equation}
where
\begin{equation}\label{eq_defLmart}
    L(\alpha, \theta, \eta, \lambda) := \int_0^T \E{H_t^{u_{\alpha}, v_{\theta}}}\di t + \lambda \abs{\int_0^{T-\Delta t} \E{\rho_{\eta}(t, X_t) \br{\M_{t+\Delta t}^{u_{\alpha}, v_\theta} - \M_t^{u_{\alpha}, v_{\theta}}}} \di t}^2
\end{equation}
with $H^{u_{\alpha}, v_{\theta}}$ and $M^{u_{\alpha}, v_{\theta}}$ given in \eqref{eq_defHt} and \eqref{eq_defMt}, respectively.
The proposed method will be named SOC-MartNet for SOCPs as it is based on the martingale condition of the cost process \eqref{eq_Mt_ut}, similar to the DeepMartNet \cite{cai2023deepmartnet,cai2023deepmartnet2}.

\begin{remark}\label{rmk_generalU}
    If the control space $U$ is general rather than an interval,
    the network structure in \eqref{eq_defualp} is no longer applicable.
    This issue can be addressed by appending a new penalty term on the right side of \eqref{eq_defLmart} to ensure $u_{\alpha}(t, X_t)$ remains within $U$.
    The following new loss function is an example:
    \begin{equation*}
        \bar{L}(\alpha, \theta, \eta, \lambda, \bar{\lambda}) := L(\alpha, \theta, \eta, \lambda) + \bar{\lambda} \int_0^T \E{\mathrm{dist}\br{u_{\alpha}(t, X_t), U}} \di t,
    \end{equation*}
    where $L(\alpha, \theta, \eta, \lambda)$ is given in \eqref{eq_defLmart}; $\bar{\lambda} \geq 0$ is a multiplier and $\mathrm{dist}\br{\kappa, U}$ denotes a certain distance between $\kappa \in \R^m$ and $U$.
\end{remark}

\subsection{Training algorithm}\label{sec_train}

To solve \eqref{eq_argmintheta} numerically, we introduce a time partition on the time interval $[0, T]$, i.e.,
\begin{equation*}
    \pi_N := \bbr{t_0, t_1, \cdots, t_N} \sptext{s.t.} 0 = t_0 < t_1 < t_2 < \cdots < t_n < t_{n+1} < \cdots < t_N = T.
\end{equation*}
For $n = 0, 1, \cdots, N-1$, denote $\Delta t_n := t_{n+1} - t_n$ and $\Delta B_{n+1} := B_{n+1} - B_n.$
% \begin{equation*}
%     \Delta t_n := t_{n+1} - t_n, \quad \Delta B_{n+1} := B_{n+1} - B_n.
% \end{equation*}
Then we apply the following numerical approximations on the loss function in \eqref{eq_defLmart}:
\begin{enumerate}
    \item The process $(X_{t_n})_{n=0}^N$ can be approximated by $(X_n)_{n=0}^N$, which is obtained by applying the Euler scheme to the SDE \eqref{eq_SDE}, i.e., for $n = 0, 1, 2, \cdots, N-1$,
    \begin{equation}\label{eq_EulerSDE}
        X_{n+1} = X_n + \mu(t_n, X_n) \Delta t_n + \sigma(t_n, X_n) \Delta B_{n+1}.
    \end{equation}

    \item The integral in \eqref{eq_defMt} can be approximated by quadrature rules, e.g., the trapezoid formula, which leads to $\M_{t_{n+1}}^{u_{\alpha}, v_{\theta}} - \M_{t_n}^{u_{\alpha}, v_{\theta}} \approx \Delta \M_{n+1}^{\alpha, \theta}$ with
    \begin{align*}
        \Delta \M_{n+1}^{\alpha, \theta} &:= v_{\theta}(t_{n+1}, X_{n+1}) - v_{\theta}(t_n, X_n) - \frac{1}{2} \br{H_n^{\alpha, \theta} + H_{n+1}^{\alpha, \theta}} \Delta t_n,\\
        H_n^{\alpha, \theta} &:= H\br{t_n, X_{n}, u_{\alpha}(t_n, X_{n}), \partial_x v_{\theta}(t_n, X_{n}), \partial_{xx}^2 v_{\theta}(t_n, X_{n})}.
    \end{align*}

    \item The expectations in \eqref{eq_defLmart} can be approximated by the Monte-Carlo method based on the i.i.d. samples of $\{(X_n, H_n^{\alpha, \theta}, \Delta \M_{n}^{\alpha, \theta})\}_{n=0}^N$, i.e.,
    \begin{equation}\label{eq_samp_Xn}
        \{(X_n^{(m)}, H_n^{\alpha, \theta, (m)}, \Delta \M_{n}^{\alpha, \theta, (m)})\}_{n=0}^N, \quad m=1, 2, \cdots, M.
    \end{equation}
\end{enumerate}
Combining the above approximations, the loss function in \eqref{eq_defLmart} is replaced by its mini-batch version for stochastic gradient descent or ascent,
\begin{align}
     & L(\alpha, \theta, \eta, \lambda; A) := \frac{1}{\abs{A}} \sum_{(n, m) \in A} H_n^{\alpha, \theta, (m)} \Delta t_n + \lambda \abs{G(\alpha, \theta, \eta, \lambda; A)}^2,\label{eq_defhatL} \\
     & G(\alpha, \theta, \eta; A) := \frac{1}{\abs{A}} \sum_{(n, m) \in A} \rho_{\eta}(t_n, X_{n}^{(m)}) \Delta \M_{n+1}^{\alpha, \theta, (m)} \Delta t_n \label{eq_defG}
\end{align}
with $\Delta t_N := \Delta \M_{N+1}^{\alpha, \theta} := 0$ for convenience,
where $A$ is a index subset randomly taken from $\bbr{0, 1, \cdots, N} \times \bbr{1, 2, \cdots, M}$ and is updated at each optimization step.
The loss function in \eqref{eq_defhatL} can be optimized by alternating gradient descent and ascent of $L(\alpha, \theta, \eta, \lambda; A)$ over $(\alpha, \theta)$ and $(\lambda, \eta)$, respectively.
The details are presented in \cref{alg_amnet}.

\begin{remark}\label{rmk_eff}
    In the martingale formulation, the diffusion process $X_t$ given by \eqref{eq_SDE} is fixed and independent of the control and the value function,
    and thus its sample paths can be generated offline before optimizing the loss function in \eqref{eq_defhatL}.
    Moreover, in the SOC-MartNet, the gradient computation for the loss function and the training of neural networks are both free of recursive iterations along the time direction, which contributes to significant efficiency gains for the SOC-MartNet.
    This feature is different from many existing deep-learning probabilistic methods for PDEs, e.g., \cite{weinan2017deep,Zhang2022FBSDE,Zhou2021Actor,hure2020deep,Nakamura2021Adaptive,Hure2021Deep,Bachouch2022Deep,Ji2022Solving}.
    Our numerical experiments in \cb{subsection~\ref{sec_dis_eff}} confirm the high efficiency of the SOC-MartNet.
\end{remark}

\begin{algorithm}[t]
    \caption{SOC-MartNet for solving the HJB-type equation \eqref{eq_HJBPDE}}\label{alg_amnet}
    \begin{algorithmic}[1]
        \Require $I$: the maximum number of iterations of  stochastic gradient algorithm;
        % $I_0$ and $r\%$: interval for updating paths exploring $\R^d$ and the percentage of paths updated;
        $M$: the total number of sample paths of diffusion process from \eqref{eq_EulerSDE};
        $\delta_{1}$/$\delta_{2}$/$\delta_{3}$/$\delta_{4}$: learning rates for control network $u_{\alpha}$/value network $v_{\theta}$/adversarial network $\rho_{\eta}$/multiplier $\lambda$;
        $\bar{\lambda}$: upper bound of multiplier $\lambda$;
        $J$/$K$: number of $(\alpha, \theta)$/$(\lambda, \eta)$ updates per iteration.
        \State Initialize the networks $u_{\alpha}$, $v_{\theta}$, $\rho_{\eta}$ and the multiplier $\lambda$
        \State Generate the sample paths $\{X_n^{(m)}\}_{n=0}^N$ for $m=1, 2, \cdots, M$ by \eqref{eq_EulerSDE}
        \For{$i = 0, 1, \cdots, I-1$}
        \State Sample the index subset $A_i \subset \bbr{0, 1, \cdots, N-1} \times \bbr{1, 2, \cdots, M}$ per \eqref{indexsample}
        \For{$j = 0, 1, \cdots ,J-1$}
        \State $\alpha \leftarrow \alpha - \delta_{1} \nabla_{\alpha} L(\alpha, \theta, \eta, \lambda; A_i)$ \;\; \Comment{$L$ is computed by \eqref{eq_defhatL}}
        \State $\theta \leftarrow \theta - \delta_{2} \nabla_{\theta} L(\alpha, \theta, \eta, \lambda; A_i)$
        \EndFor
        \For{$k = 0, 1, \cdots, K-1$}
        \State $\eta \leftarrow \eta + \delta_{3} \nabla_{\eta} L(\alpha, \theta, \eta, \lambda; A_i)$
        \State $\lambda \leftarrow \min \bbr{\bar{\lambda},\; \lambda + \delta_{4} |G(\alpha, \theta, \eta; A_i)|^2}$ \;\;\Comment{$G$ is computed by \eqref{eq_defG}}
        \EndFor
        \EndFor
        \Ensure $u_{\alpha}$ and $v_{\theta}$
    \end{algorithmic}
\end{algorithm}

% \section{Applications to specific problems}\label{sec_specprob}

\subsection{Application to parabolic problems}\label{sec_app_para}

The SOC-MartNet proposed in the last subsection is applicable for the HJB-type equation~\eqref{eq_HJBPDE}.
In this section, we explore how SOC-MartNet can be tailored to parabolic equations, yielding enhanced efficiency and simplicity.
\cb{
    Specifically, we consider the parabolic equation in the form of
\begin{equation}\label{eq_parab}
    \partial_t v(t, x) + \mathcal{L} v(t, x)
    + f\br{t, x, v(t, x), \partial_x v(t, x), \partial_{xx}^2 v(t, x)}  = 0
\end{equation}
with $f$ being a given function.
Here $f$ could depend also on $v(t, x)$, compared with $\bar{H}$ in \eqref{eq_hjbbarH}, and this dependence does not pose any difficulties in applying the method to \eqref{eq_parab}}. 

By \eqref{eq_defMt} with $f$ in place of $H$, we obtain a new cost process $\tilde{\M}^{v}$ independent of $u$, i.e.,
\begin{equation*}
    \tilde{\M}_t^{v} := v(t, X_t) + \int_0^t f(s, X_t, v(t, X_t), \partial_x v(t, X_t), \partial_{xx}^2 v(t, X_t)) \di s, \quad t \in [0, T].
\end{equation*}
Under some regularity conditions, by following the deductions in section~\ref{sec_martformu}, we conclude that
$v$ satisfies the equation~\eqref{eq_HJBPDE} for $t \in [0, T]$ and $x \in \Gamma(X_t)$
if $\tilde{\M}^{v}$ is a $\mathbb{F}^B$-martingale.
Thus the value function $v_{\theta^*}$ can be learned through adversarial training to enforce the martingale property of $\tilde{\M}^{v}$, i.e.,
\begin{equation}\label{eq_defbarG}
    \theta^*  = \argmin_{\theta} \bbr{\max_{\eta}  \abs{\tilde{G}(\theta, \eta)}}
\end{equation}
with
% $\tilde{G}(\theta, \eta) := \vert\int_0^{T-\Delta t} \mathbb{E}[\rho_{\eta}(t, X_t) (\tilde{\M}_{t+\Delta t}^{v_\theta} - \tilde{\M}_t^{v_{\theta}})] \di t\vert^2$.
\begin{equation*}
    \tilde{G}(\theta, \eta) := \int_0^{T-\Delta t} \E{\rho_{\eta}(t, X_t) \br{\tilde{\M}_{t+\Delta t}^{v_\theta} - \tilde{\M}_t^{v_{\theta}}}} \di t.
\end{equation*}

To learn the value function from \eqref{eq_defbarG}, at each iteration step, the loss function $\tilde{G}(\theta, \eta)$ is replaced by its mini-batch version defined as
\begin{equation}\label{eq_barG}
    \tilde{G}(\theta, \eta; A) := \frac{1}{\abs{A}} \sum_{(n, m) \in A} \rho_{\eta}(t_n, X_{n}^{(m)}) \Delta \tilde{\M}_{n+1}^{\theta, (m)} \Delta t_n,
\end{equation}
where $A$ is a index subset randomly taken from $\bbr{0, 1, \cdots, N} \times \bbr{1, 2, \cdots, M}$ and
\begin{align*}
    \Delta \tilde{\M}_{n+1}^{\theta, (m)} & := v_{\theta}(t_{n+1}, X_{n+1}^{(m)}) - v_{\theta}(t_n, X_n^{(m)}) - \frac{1}{2} \br{f_n^{\theta, (m)} + f_{n+1}^{\theta, (m)}} \Delta t_n,     \\
    f_n^{\theta, (m)}                     & := f\br{t_n, X_{n}^{(m)}, v_{\theta}(t_n, X_{n}^{(m)}), \partial_x v_{\theta}(t_n, X_{n}^{(m)}), \partial_{xx}^2 v_{\theta}(t_n, X_{n}^{(m)})},
\end{align*}
and $X_{n}^{(m)}$ is introduced in \eqref{eq_samp_Xn}.
\Cref{alg_amnet2} presents the detailed procedures of the SOC-MartNet for parabolic equations.

\begin{algorithm}[t]
    \caption{SOC-MartNet for solving the parabolic equation~\eqref{eq_parab}}\label{alg_amnet2}
    \begin{algorithmic}[1]
        \Require $I$: the maximum number of iterations of stochastic gradient algorithm;
        $M$: the total number of sample paths of diffusion process from \eqref{eq_EulerSDE};
        $\delta_{2}$/$\delta_{3}$: learning rates for value network $v_{\theta}$/adversarial network $\rho_{\eta}$;
        $J$/$K$: number of $\theta$/$\eta$ updates per iteration.
        \State Initialize the networks $v_{\theta}$ and $\rho_{\eta}$
        \State Generate the sample paths $\{X_n^{(m)}\}_{n=0}^N$ for $m=1, 2, \cdots, M$ by \eqref{eq_EulerSDE}
        \For{$i = 0, 1, \cdots, I-1$}
        \State Sample the index subset $A_i \subset \bbr{0, 1, \cdots, N-1} \times \bbr{1, 2, \cdots, M}$ per \eqref{indexsample}
        \For{$j = 0, 1, \cdots ,J-1$}
        \State $\theta \leftarrow \theta - \delta_{2} \nabla_{\theta} \abs{\tilde{G}(\theta, \eta; A_i)}^2$ \;\; \Comment{$\tilde{G}$ is computed by \eqref{eq_barG}}
        \EndFor
        \For{$k = 0, 1, \cdots, K-1$}
        \State $\eta \leftarrow \eta + \delta_{3} \abs{\nabla_{\eta} \tilde{G}(\theta, \eta; A_i)}^2$
        \EndFor
        \EndFor
        % \State Find the optimal control $u^*$ by \eqref{eq_defustar}
        \Ensure $v_{\theta}$
    \end{algorithmic}
\end{algorithm}

\section{Numerical tests}\label{sec_tests}

In this section, we carry out numerical tests to show the behaviors of the SOC-MartNet in solving high-dimensional parabolic equations and HJB equations.

\medskip
\noindent {\bf Sampling of index set $A_i$.} For the SOC-MartNet given by \cref{alg_amnet,alg_amnet2}, the index subset $A_i$ on Line 4 is taken as
\begin{equation}\label{indexsample}
    A_i = \bbr{0, 1, 2, \cdots, N} \times M_i,
\end{equation}
where $M_i$ is a random subset of the index set $\bbr{1, 2, \cdots, M}$ and obtained as follows:
\begin{enumerate}
    \item At the beginning of each epoch, a random permutation of $\bbr{1, 2, \cdots, M}$ is created and equally split into some index subsets $P_1, P_2, \cdots, P_m$;
    \item During the epoch, $P_1, P_2, \cdots, P_m$ are used as $M_i$ one-by-one for each iteration;
    \item Once all the index subsets have been used, namely the epoch is completed, the process restarts at step 1 for the next epoch.
\end{enumerate}
At the $i$-th iteration step, we take the batch size $\abs{M_i}$ of sample paths as \cb{$\abs{M_i} = 256$ and $128$ for $d \leq 1000$ and $d > 1000$}, respectively.

\cb{The numerical method solves $v(0, x)$ for $x \in D_0$, where $D_0$ is a set of finite spatial points in $\R^d$ and will be specified in each benchmark problem. }
The start points $X_0^{(m)}$ of the sample paths in \eqref{eq_EulerSDE} are randomly taken from the set $D_0$ \cb{with replacement}, where the total number of sample paths is set to $M = 10^5$.

If no otherwise specified, we take $T = 1$ and $N = 100$ for all involved loss functions, and all the loss functions are minimized by the RMSProp algorithm. 
The learning rates on Lines 6, 7 and 10 of \cref{alg_amnet} are set to
\begin{equation*}
    \delta_{1} = \delta_{2} = \delta_0 \times 10^{-3} \times 0.01^{i/I}, \quad \delta_{3} = 10^{-2} \times 0.01^{i/I}, \quad i = 0, 1, \cdots, I-1
\end{equation*}
with $\delta_0 := 3d^{-0.5}$ for $d \leq 1000$ and $\delta_0 := 3 d^{-0.8}$ for $d > 1000$.
If not otherwise specified,
the initial value of $\lambda$ is $10$ with its learning rate $\delta_4 = 10$ and its upper bound $\bar{\lambda}$ set to $10^3$.
The inner iteration steps are $J = 2K = 2$.
The neural networks $u_{\alpha}$ and $v_{\theta}$ both consist of $6$ hidden layers with $d + 10$ ReLU units in each hidden layer.
The adversarial network $\rho_{\eta}$ is given by \eqref{eq_rho_sin} with the output dimensionality $r = 600$.

All the tests are implemented by Python 3.12 and PyTorch 2.51.
If no otherwise specified, the algorithm is accelerated by the strategy of Distributed Data Parallel (DDP) \footnote{\url{https://github.com/pytorch/tutorials/blob/main/intermediate_source/ddp_tutorial.rst}} on a compute node equipped with 8 GPUs \cb{(NVIDIA A100-SXM4-80GB)}.
When reporting the numerical results,  ``SD'', ``vs'' and ``Iter.''   are short for ``standard deviation'', ``versus'' and ``Iter.'', respectively.
The term ``Mart. Loss'' denotes the value of $|G(\alpha, \theta, \eta; A_i)|^2$ defined in \eqref{eq_defG}, and ``Hamilt.'' denotes the empirical Hamiltonian defined by $\text{Hamilt.} := \abs{A_i}^{-1} \sum_{(n, m) \in A_i} H_n^{\alpha, \theta, (m)} \Delta t_n$ recalling \eqref{eq_defhatL}. 
\cb{
The relative $L^1$-error and $L^{\infty}$-error are respectively defined by
\begin{align*}
    \text{Rel.}\; L^1\text{-error} &:= \vbr{\Big}{\sum_{x \in D_0} \abs{\widehat{v}\br{0, x} - v(0, x)}} \big/ \vbr{\Big}{\sum_{x \in D_0} \abs{v(0, x)}}, \\
    \text{Rel.}\; L^{\infty}\text{-error} &:= \vbr{\Big}{\max_{x \in D_0} \abs{\widehat{v}\br{0, x} - v(0, x)}} \big/ \vbr{\Big}{\max_{x \in D_0} \abs{v(0, x)}}.
\end{align*}
}

\subsection{Linear parabolic problem}

We consider the following problem:
\begin{equation}\label{eq_simp_proble}
    \left\{\begin{aligned}
         & (\partial_t + \frac{1}{2}\Delta_x) v(t, x) - f(t, x) = 0, \quad (t, x) \in [0, T) \times \R^d, \\
         & v(T, x) = g(x), \quad x \in \R^d,
    \end{aligned}\right.
\end{equation}
where $f$ and $g$ are chosen such that $v$ is given by
\begin{equation}\label{eq_linearsol}
    v(t, x) = 1 +  \frac{1}{d} \sum_{i=1}^d \sin(t + x_i), \quad (t, x) \in [0, T] \times \R^d.
\end{equation}
\cb{
We apply the SOC-MartNet (\cref{alg_amnet2}) to solve $v(0, x)$ for $x \in D_0$, where $D_0$ consits of $M$ uniformly spaced gird points on the two spatial line segments $S_1 \cup S_2$ defined by}
\cb{
\begin{align}
    &S_1 := \bbr{s \B{e}_1: s \in [-1, 1]}, \quad \B{e}_1 := (1, 0, 0, \cdots, 0)^{\top} \in \R^{d}, \label{eq_defSi}\\
    &S_2 := \bbr{s \B{1}_d: s \in [-1, 1]}, \quad \B{1}_d := (1, 1, \cdots, 1)^{\top} \in \R^{d}.\label{eq_defS2}
\end{align}
}
The relevant numerical results for $d = 100, 1000, 2000$ are presented in \cref{fig_linear_prab}, which demonstrate that the SOC-MartNet is effective for problems with dimensionality upto $2000$.

\begin{figure}[htbp]
    \centering
    \subfloat[$s \mapsto v(0, s \B{e}_1)$, $d=100$]{\includegraphics[width=0.25\textwidth]{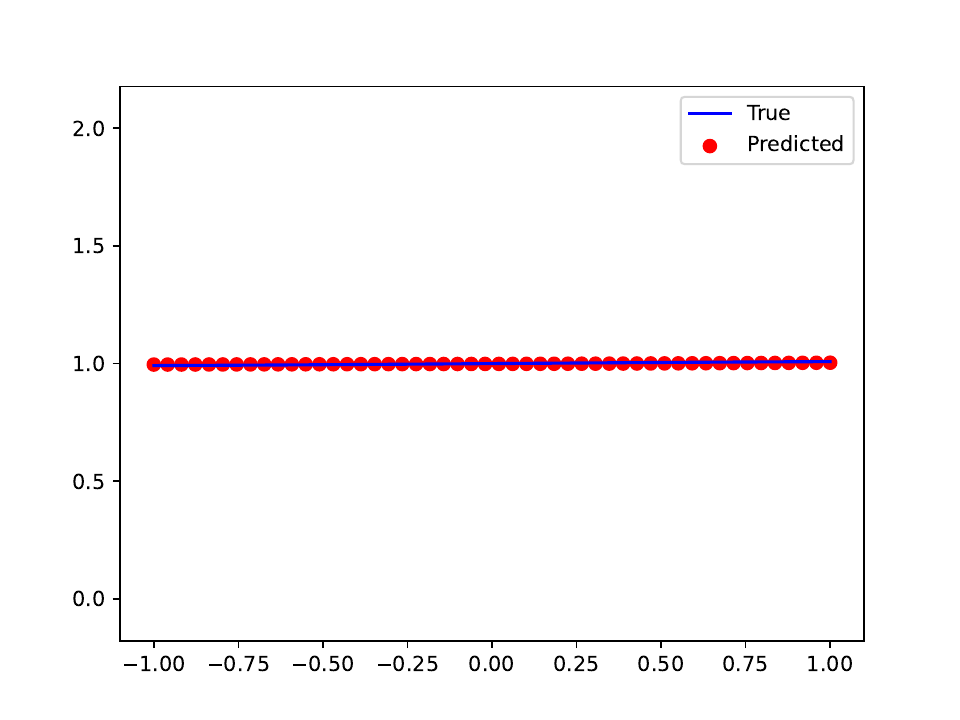}}
    \subfloat[$s \mapsto v(0, s \B{1}_d)$, $d=100$]{\includegraphics[width=0.25\textwidth]{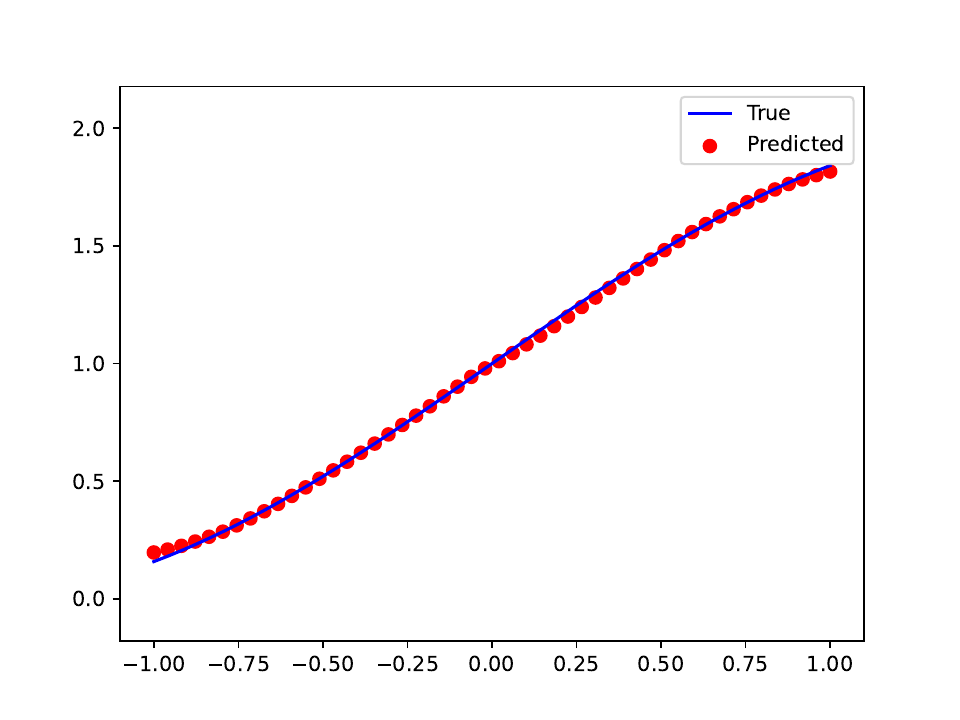}}
    \subfloat[Mart. Loss vs Iter., $d=100$]{\includegraphics[width=0.23\textwidth]{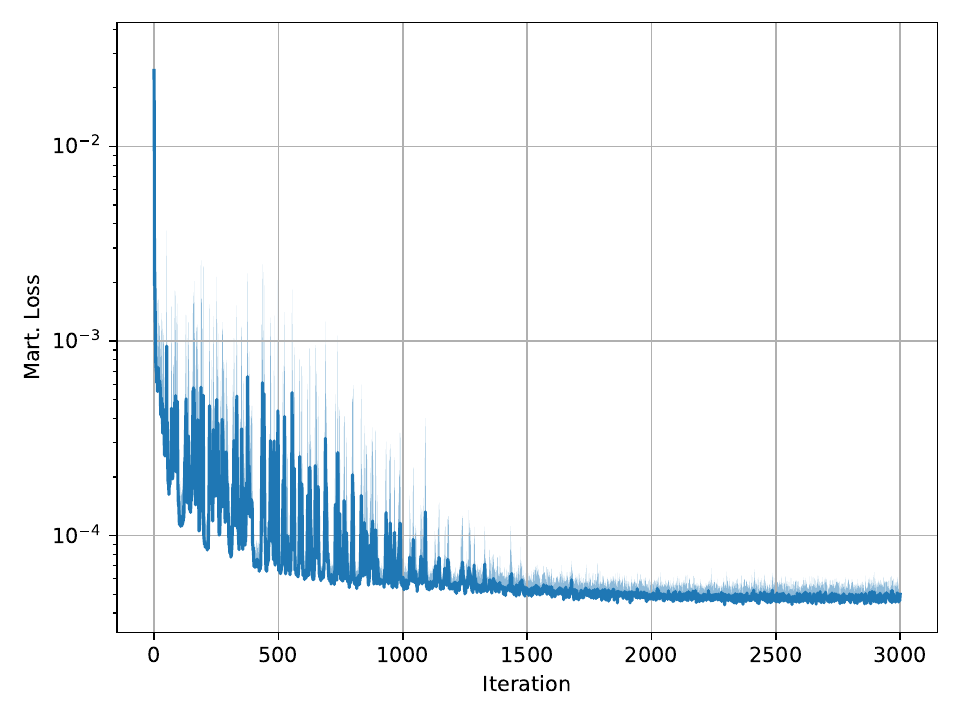}}
    \subfloat[RE vs Iter., $d=100$]{\includegraphics[width=0.23\textwidth]{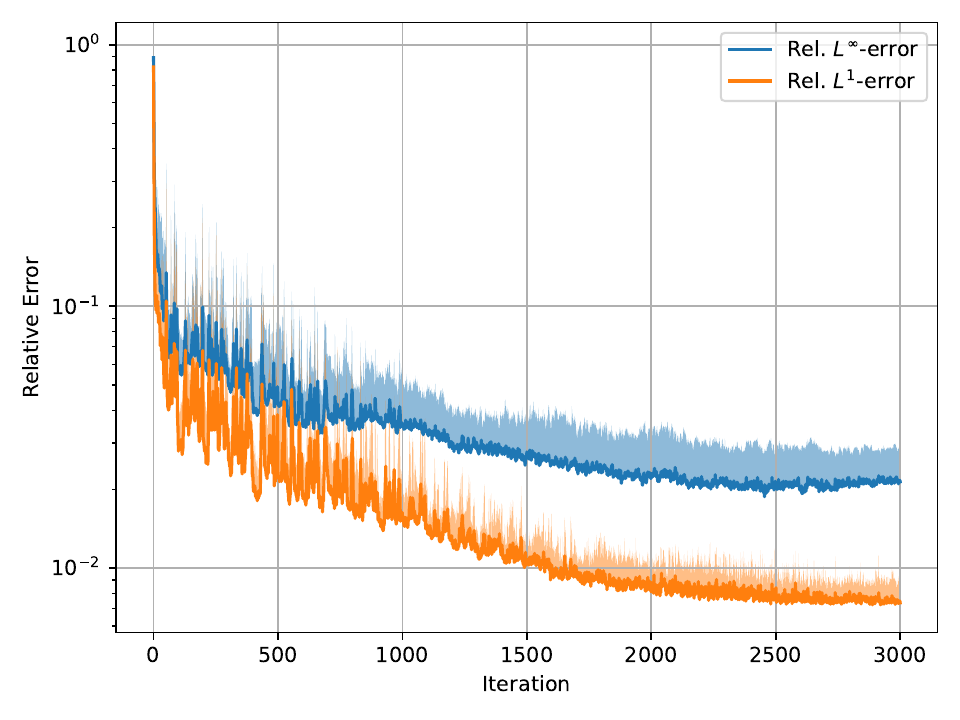}}
    \\
    \subfloat[$s \mapsto v(0, s \B{e}_1)$, $d=1000$]{\includegraphics[width=0.25\textwidth]{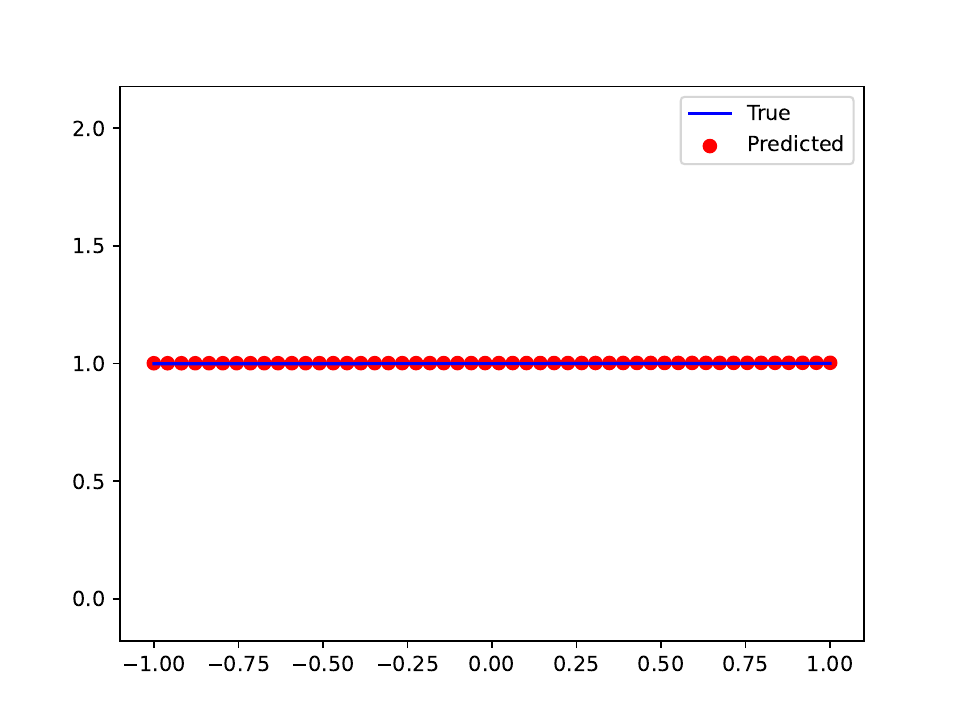}}
    \subfloat[$s \mapsto v(0, s \B{1}_d)$, $d=1000$]{\includegraphics[width=0.25\textwidth]{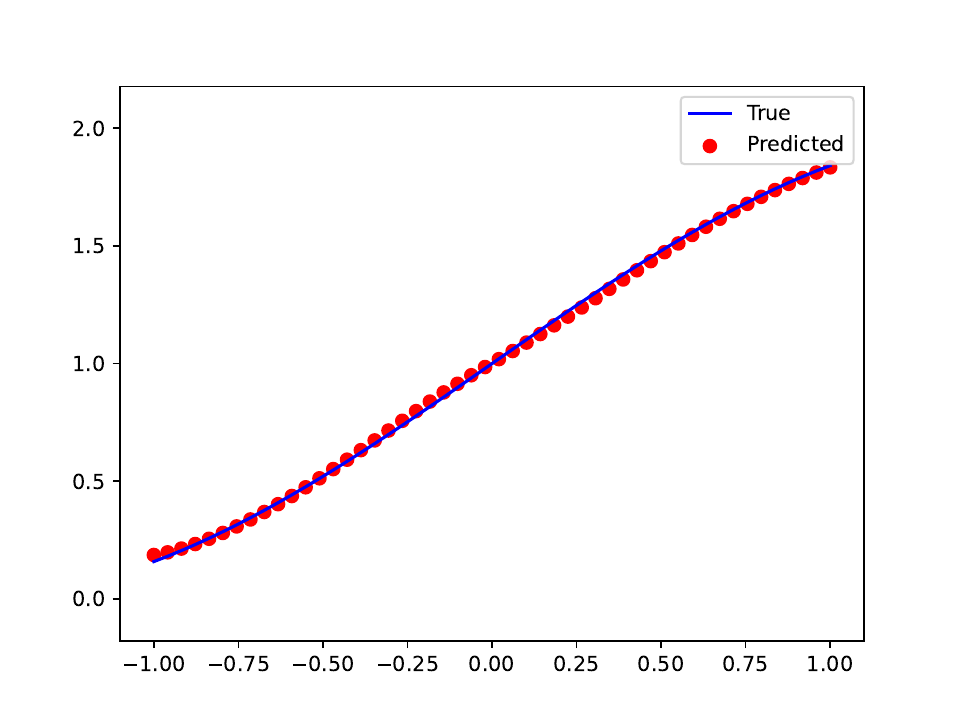}}
    \subfloat[Mart. Loss vs Iter., $d=1000$]{\includegraphics[width=0.23\textwidth]{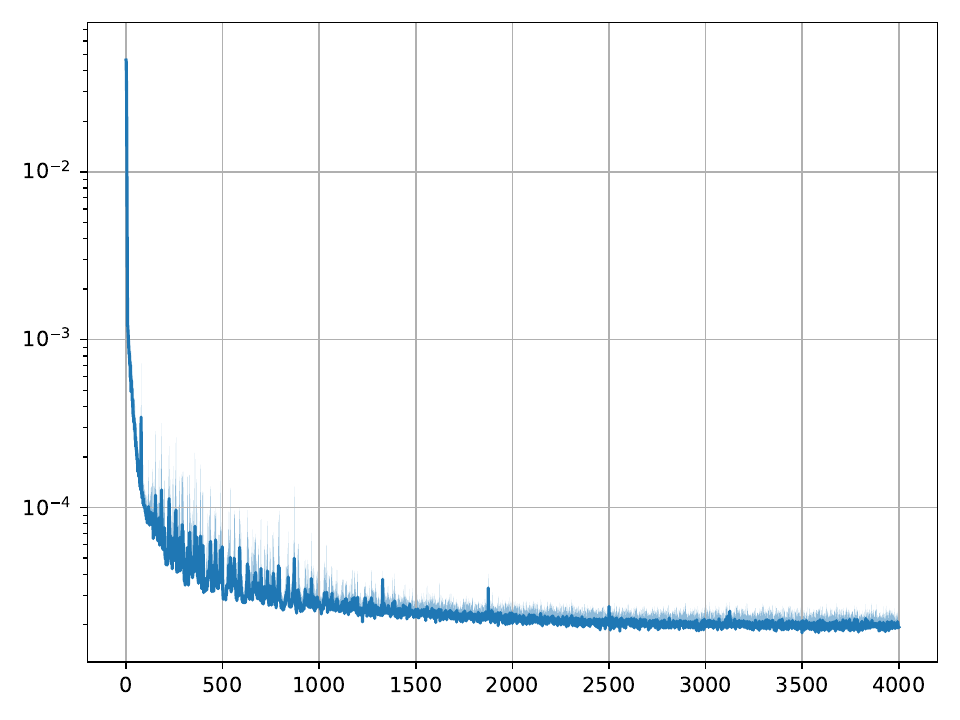}}
    \subfloat[RE vs Iter., $d=1000$]{\includegraphics[width=0.23\textwidth]{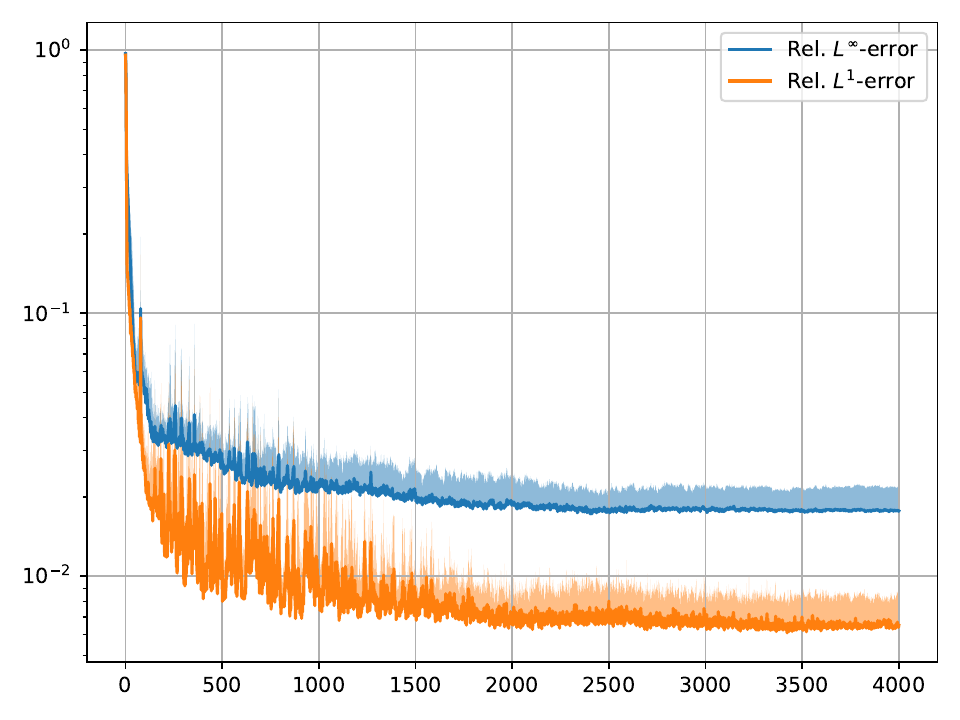}}
    \\
    \subfloat[$s \mapsto v(0, s \B{e}_1)$, $d=2000$]{\includegraphics[width=0.25\textwidth]{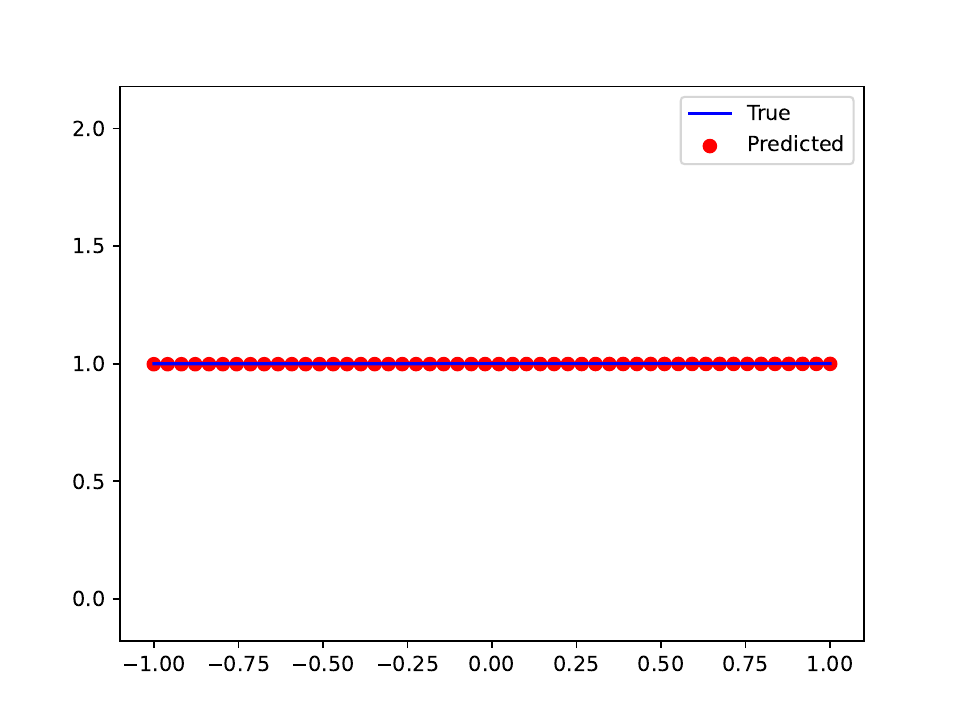}}
    \subfloat[$s \mapsto v(0, s \B{1}_d)$, $d=2000$]{\includegraphics[width=0.25\textwidth]{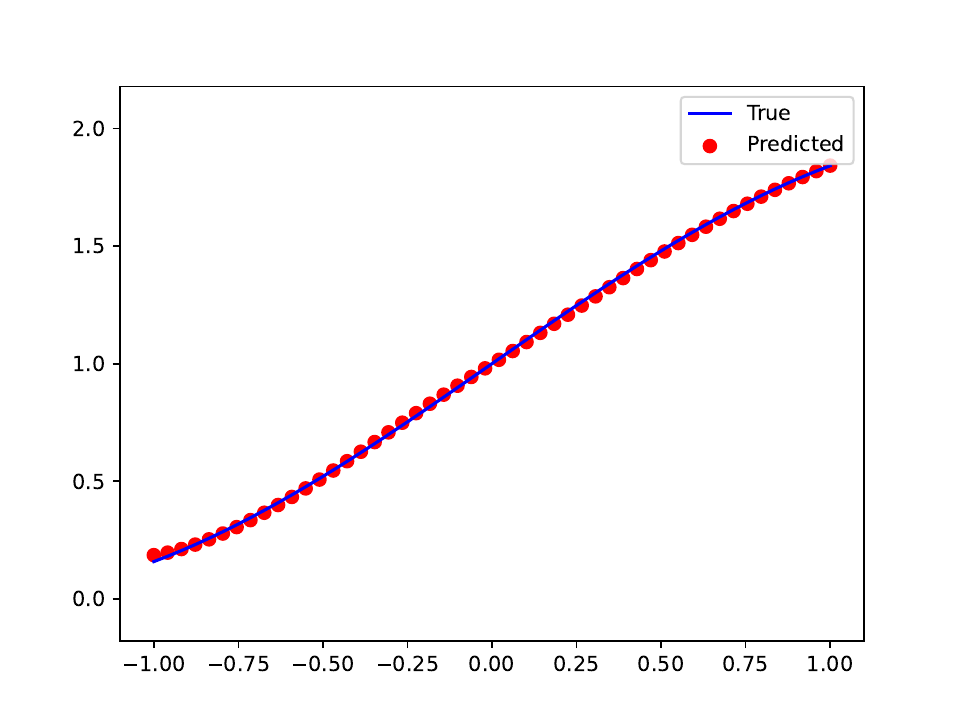}}
    \subfloat[Mart. Loss vs Iter., $d=2000$]{\includegraphics[width=0.23\textwidth]{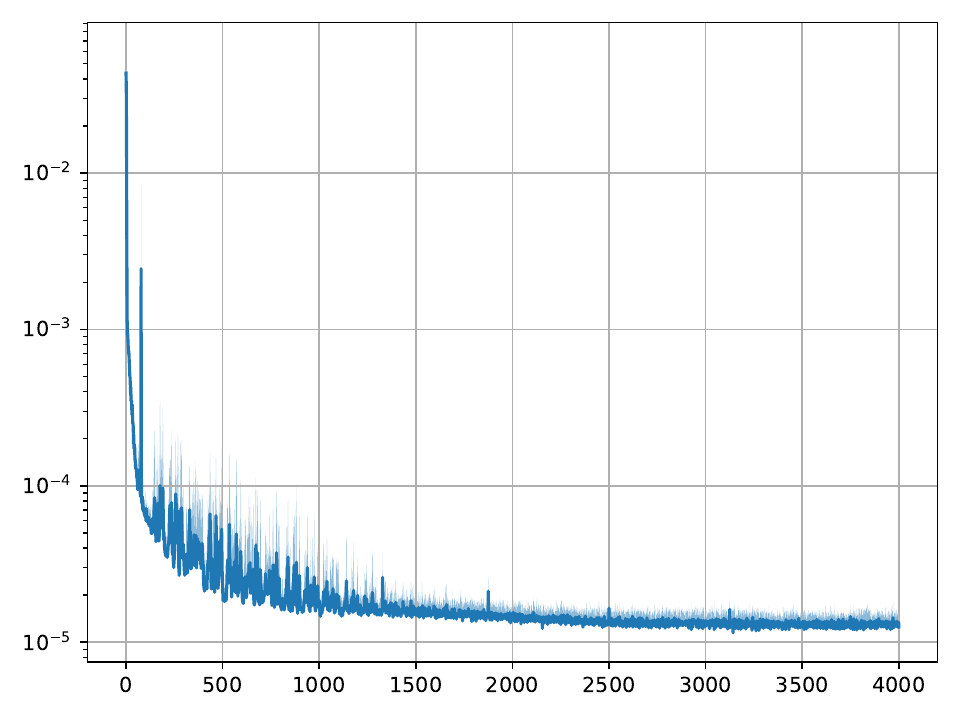}}
    \subfloat[RE vs Iter., $d=2000$]{\includegraphics[width=0.23\textwidth]{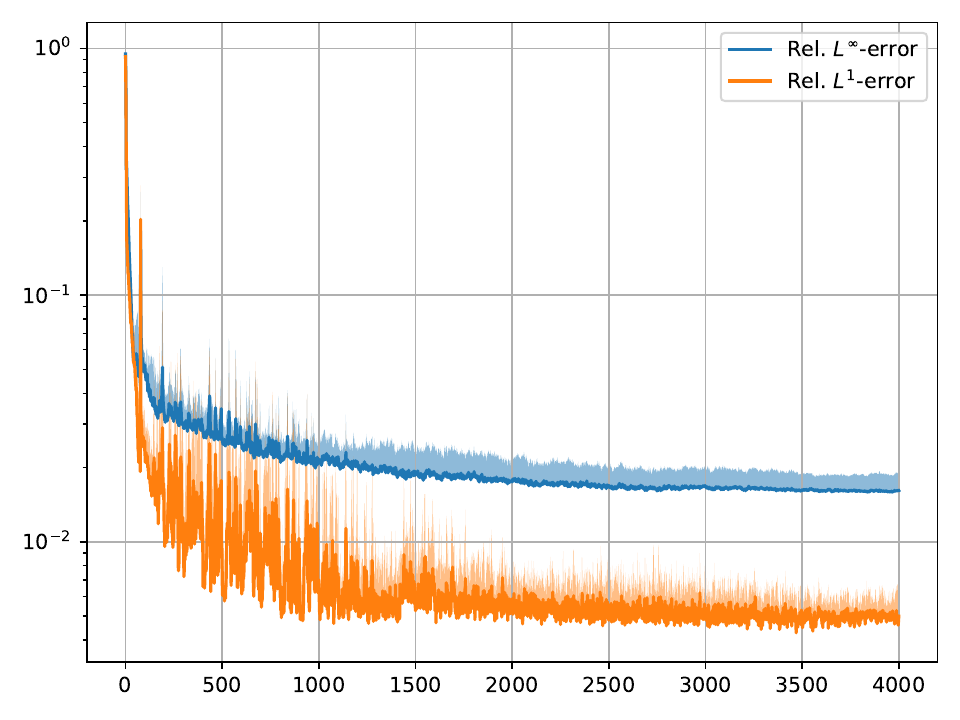}}
    \caption{Numerical results of SOC-MartNet (\Cref{alg_amnet2}) for the linear parabolic problem~\eqref{eq_simp_proble}.
    The shaded region represents the mean + $2 \times$ SD of the loss values and relative errors across 5 independent runs.
    The running times are 37, 112 and 363 seconds for $d=100$, $1000$, and $2000$, respectively.
    }\label{fig_linear_prab}
\end{figure}

\subsection{Semilinear parabolic equation}

We consider the semilinear parabolic equation from \cite[Section 4.3]{weinan2017deep}:
\begin{equation}\label{eq_semiparab}
    \left\{\begin{aligned}
         & \br{\partial_t + \Delta_x} v(t, x) - \abs{\partial_x v(t, x)}^2 = 0, \quad (t, x) \in [0, T) \times \R^d, \\
         & v(T, x) = 1 + g(x), \quad x \in \R^d
    \end{aligned}\right.
\end{equation}
for some given terminal function $g: \R^d \to \R$.
The problem \eqref{eq_semiparab} admits an analytic solution as
\begin{equation}\label{eq_vtx_hjb1}
    v(t, x) = 1 - \ln\left(\mathbb{E}\left[\exp \left(-g(x+\sqrt{2} B_{T-t})\right)\right]\right), \quad (t, x) \in [0, T] \times \R^d.
\end{equation}
To compute the absolute error of numerical solutions, the analytic solution in \eqref{eq_vtx_hjb1} is approximated by the Monte-Carlo method applied on the expectation using $10^6$ i.i.d. samples of $B_{T-t}$.

For this example, we consider an oscillatory terminal function as
\begin{equation}\label{eq_oscgx}
    g(x):= \frac{1}{d} \sum_{i=1}^d \bbr{\sin(x_i - \frac{\pi}{2}) + \sin\br{\br{\epsilon_0 + x_i^2}^{-1}}}, \quad x \in \R^d, \quad \epsilon_0 = \pi/10.
\end{equation}
Under \eqref{eq_oscgx}, the true solution along the diagonal of the unite cube, i.e., $s \mapsto v(t, s \B{1}_d)$, is independent of the spatial dimensionality $d$, whose graphs at $t = 0$ and $T$ are presented in Figures~\ref{fig1_hjb2_turet0te} (a) - (d).
As the graphs show, although $s \mapsto v(T, s \B{1}_d)$ is oscillatory around $s = 0$, the curve of $s \mapsto v(0, s \B{1}_d)$ is relatively smooth and depends on the terminal time $T$.

We apply the SOC-MartNet (\cref{alg_amnet2}) to solve $v(0, x)$ for $x \in D_0$ with $D_0$ the same as the last example. 
The relevant numerical results for $d=100$ are presented in Figures~\ref{fig1_hjb2_turet0te} (e) - (h), which show that the SOC-MartNet (\cref{alg_amnet2}) remains unaffected by the oscillations in $g(x)$, and captures the analytical solution well.

\begin{figure}[t]
    \centering
    \subfloat[$T=1$]{\includegraphics[width=0.24\textwidth]{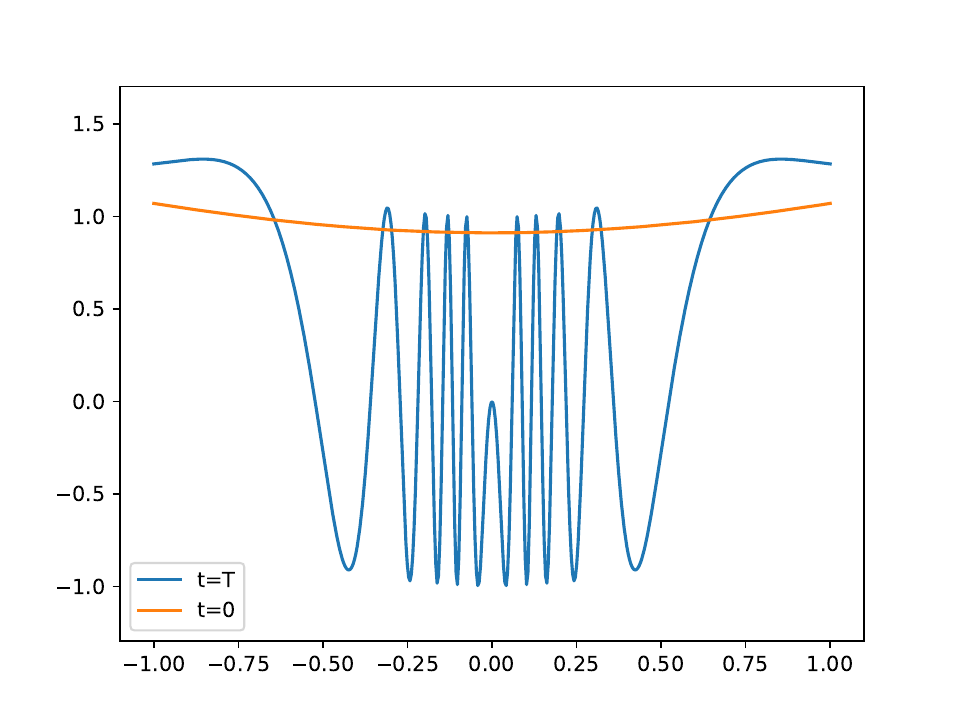}}
    \subfloat[$T=0.1$]{\includegraphics[width=0.24\textwidth]{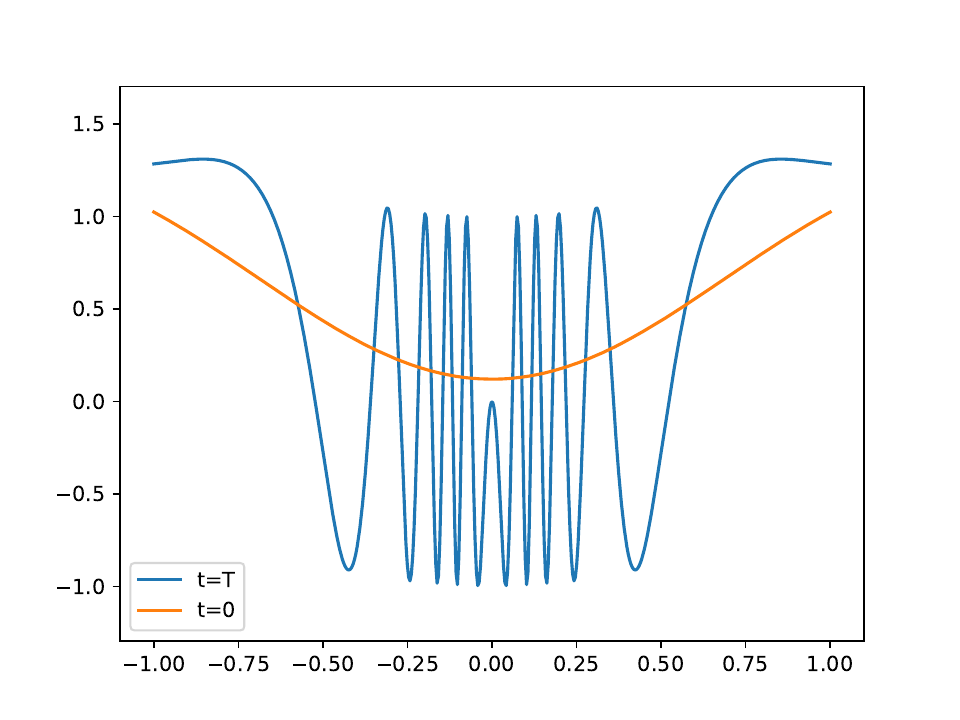}}
    \subfloat[$T=0.05$]{\includegraphics[width=0.24\textwidth]{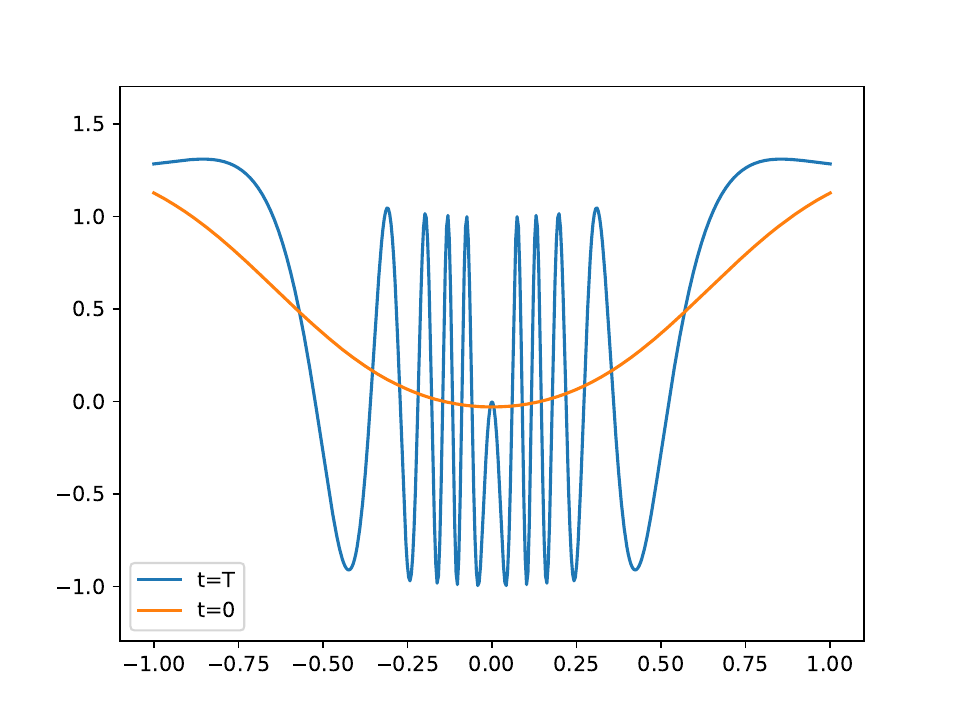}}
    \subfloat[$T=0.01$]{\includegraphics[width=0.24\textwidth]{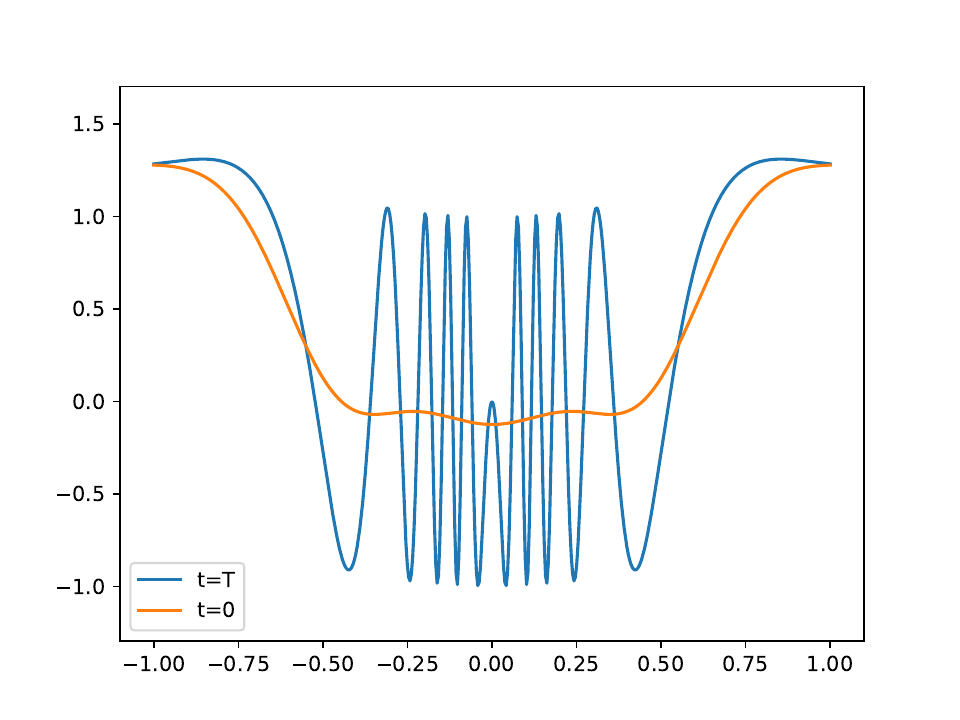}}\\
    \subfloat[$T=1$]{\includegraphics[width=0.24\textwidth]{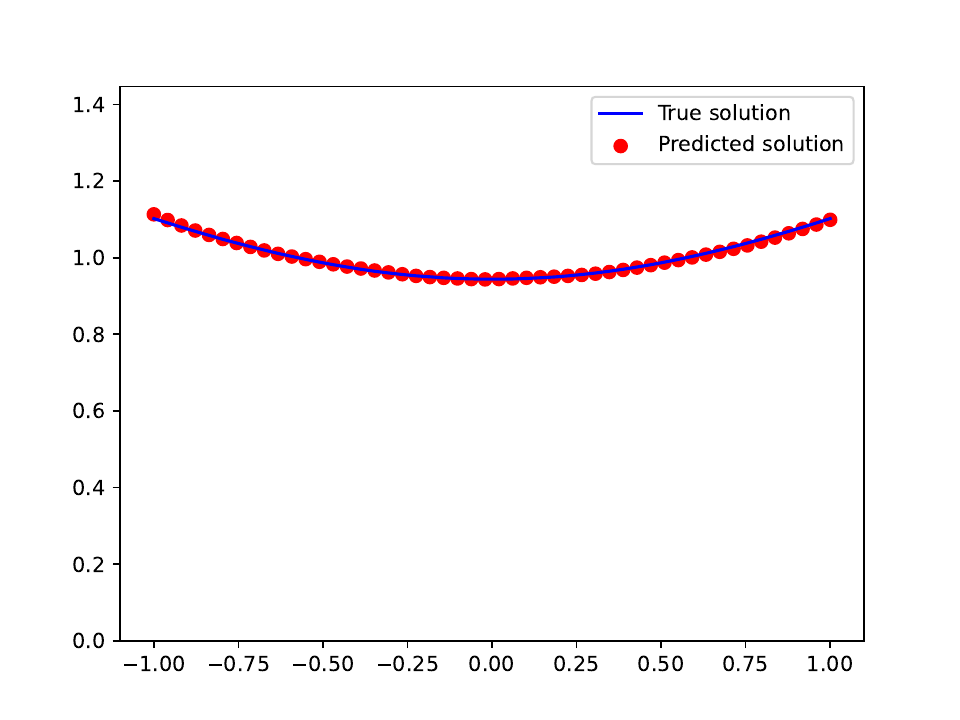}}
    \subfloat[$T=0.1$]{\includegraphics[width=0.24\textwidth]{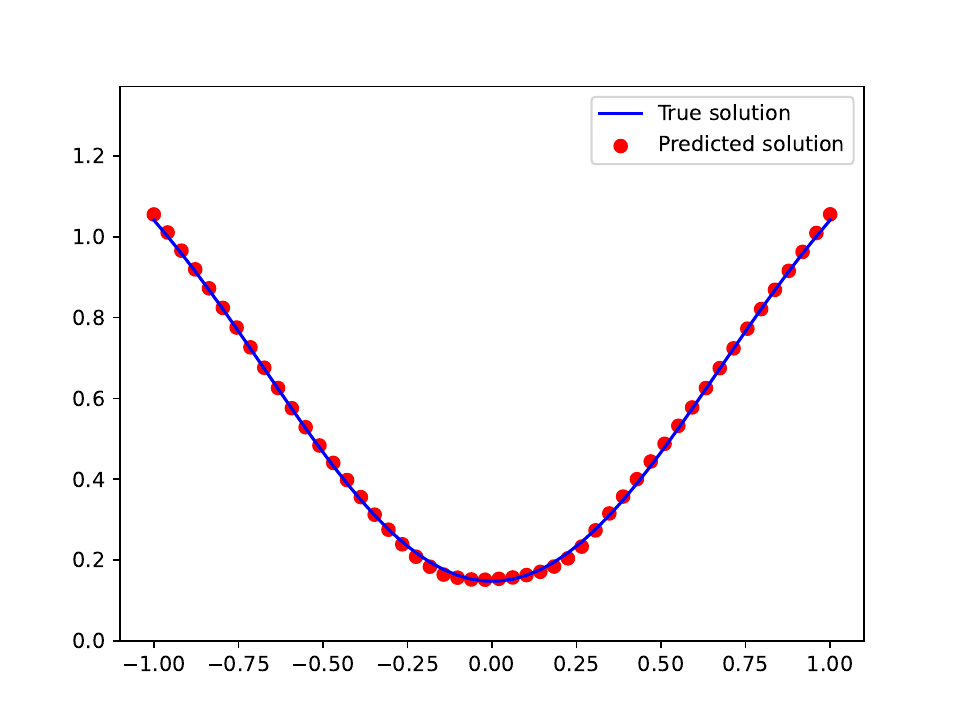}}
    \subfloat[$T=0.05$]{\includegraphics[width=0.24\textwidth]{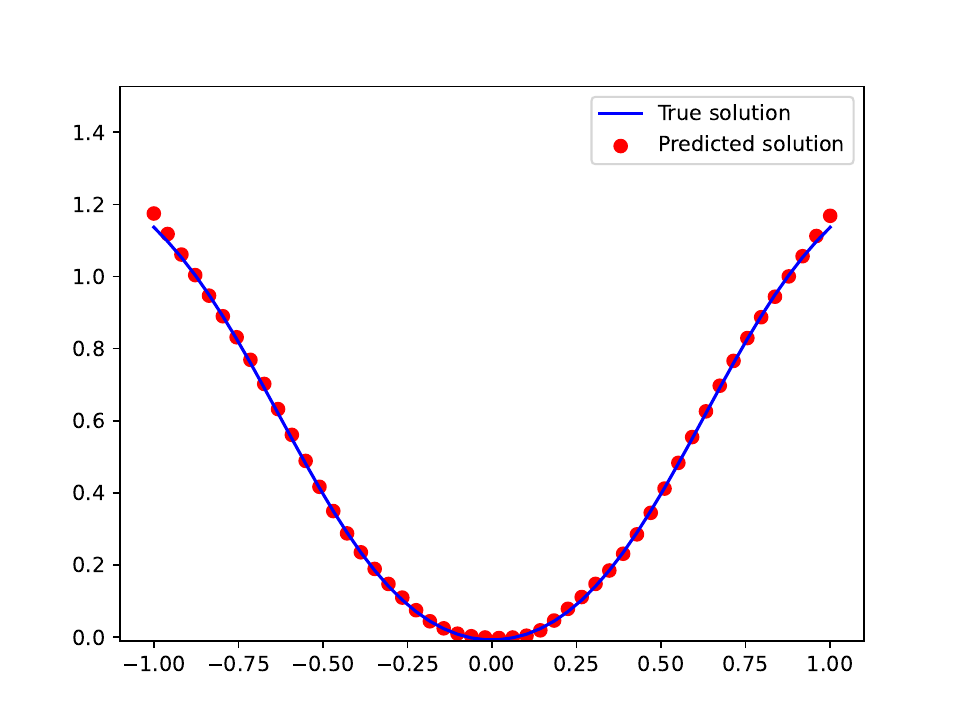}}
    \subfloat[$T=0.01$]{\includegraphics[width=0.24\textwidth]{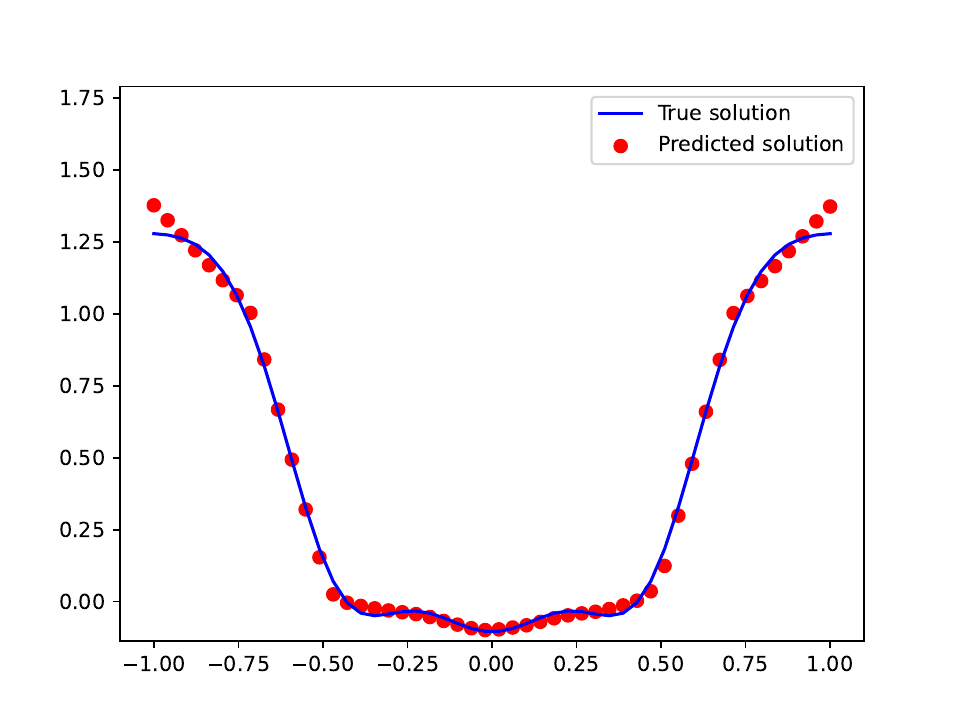}}
    \caption{Numerical results of SOC-MartNet (\Cref{alg_amnet2}) for the semilinear parabolic problem~\eqref{eq_semiparab} with $d=100$ and oscillatory terminal function~\eqref{eq_oscgx}.
    (a) - (d) Graphs of the true solutions $s \mapsto v(t, s \B{1}_d)$ at $t = 0, T$ with varying $T$.
    (e) - (h) Numerical solution of $s \mapsto v(0, s \B{1}_d)$ given by the SOC-MartNet.}\label{fig1_hjb2_turet0te}
\end{figure}

\subsection{Test on time convergence rate}\label{sec_testcr}
\cb{
We consider a specific parabolic equation \eqref{eq_parab} with a variable coefficient in $\mathcal{L}$ and an Allen-Cahn-type source term $f$, i.e.,
\begin{equation}\label{eq_linsin}
\begin{aligned}
    &\mathcal{L} = \sum_{i=1}^d \sin(2 x_i) \partial_{x_i} + \frac{1}{2} \sum_{i=1}^d \br{1 + 0.5 \sin(5 t + x_i)}^2 \partial_{x_i}^2, \\
    &f(t, x, v, \partial_x v, \partial_{xx}^2 v) = v - v^3 + \bar{f}(t, x), 
\end{aligned}
\end{equation}
where the function $\bar{f}$ and the terminal condition $v(T, x) = g(x)$ are chosen such that the true solution is given by \eqref{eq_linearsol}.  
The relevant numerical results of SOC-MartNet (\Cref{alg_amnet2}) are shown in \Cref{fig_cr}. 
The relative error $\mathrm{RE}_1$ closely aligns with the red reference line with slop $1.01$, indicating a first-order convergence rate of $O(N^{-1.01})$, and in line with our claim in \Cref{rmk_ord1}. 
}

\begin{figure}[htbp]
    \centering
    \begin{tikzpicture}
        \begin{loglogaxis}[
            width=0.55\textwidth,
            xlabel={$N$},
            ylabel={$\mathrm{RE}_1$},
            xtick={3, 4, 5, 6, 8, 12, 18, 26, 38, 56},
            xticklabels={3, 4, 5, 6, 8, 12, 18, 26, 38, 56},
            ytick={0.007, 0.01, 0.02, 0.03, 0.04, 0.05, 0.06, 0.07},
            log ticks with fixed point,
            transpose legend,
            legend pos=outer north east,
            ymajorgrids=true,
            xmajorgrids=true,
            grid style=dashed]
        \addplot[only marks, color=blue]
        coordinates{
            (3, 0.0643667 )
            (4, 0.04480899) 
            (5, 0.03325969) 
            (6, 0.02660629) 
            (8, 0.02002971) 
            (12, 0.01459832) 
            (18, 0.0103175 )
            % (26, 0.00916092)
            % (38, 0.0076904 )
            % (56, 0.00710161)
        };
        \addlegendentry{Numerical results}
        \addplot [domain=2.9:19, samples=100, color=red]{10^(-0.75) * x^(-1.01)};
        \addlegendentry{$\mathrm{RE}_1 = 10^{-0.75} N^{-1.01}$}
        \end{loglogaxis}
    \end{tikzpicture}
    \caption{
    \cb{
    Log-log plot of the relative $L^1$-error $\mathrm{RE}_1$ of SOC-MartNet (\Cref{alg_amnet2}) vs the number of time partitions $N$ for the parabolic equation \eqref{eq_parab} with parameter setting~\eqref{eq_linsin} and with $d=100$. 
    The red reference line indicates a first-order convergence rate of $O(N^{-1.01})$.}
    }\label{fig_cr}
\end{figure}

% dt: [0.33333333, 0.25, 0.2, 0.16666667, 0.125, 0.08333333, 0.05555556, 0.03846154, 0.02631579, 0.01785714]
% mean_l1err: [0.0643667, 0.04480899, 0.03325969, 0.02660629, 0.02002971, 0.01459832, 0.0103175, 0.00916092, 0.0076904, 0.00710161]

\subsection{Non-degenerated HJB equation without using explicit form of \texorpdfstring{$\inf_u H$}{inf\_u H}}\label{sec_nondegHjb}
\cb{
We consider the following HJB equation \cite[Section 3.1]{Bachouch2022Deep}:
\begin{equation}\label{eq_HjbLq}
\left\{\begin{aligned}
    &\br{\partial_t + b^{\top} \partial_x + \epsilon_1 \Delta_x} v(t, x) + \inf_{\kappa \in \R^d} \br{2 \kappa^{\top} \partial_x v(t, x) + c_1 \abs{\kappa}^2} = 0 \\
    &v(T, x) = g(x), \quad g(x + b) := \frac{1}{d} \sum_{i=1}^d \bbr{\sin(x_i - \frac{\pi}{2}) + \sin\br{\br{\epsilon_0 + x_i^2}^{-1}}}
\end{aligned} \right. 
\end{equation}
with $(t, x) \in [0, T) \times \R^d$, where $b \in \R^d$, $c_1, \epsilon_0$ and $\epsilon_1 > 0$ are parameters to be specified.
The HJB equation~\eqref{eq_HjbLq} is associated with the SOCP:
\begin{align}
    &u^* = \argmin_{u \in \mathcal{U}_{\mathrm{ad}}} J(u), \quad J(u) := 1 + \E{\int_0^T c_1 \abs{u_s}^2 \di s + g\br{X_T^u}}, \label{eq_socp_example} \\
    &\mathcal{U}_{\mathrm{ad}} = \bbr{u: [0, T] \times \Omega \to \R^d: \text{$u$ is $\mathbb{F}^B$-adapted}}, \notag\\
    &X_t^u = X_0 + \int_0^t (b + 2 u_s) \di s + \int_0^t \sqrt{2 \epsilon_1} \di B_s, \quad t \in [0, T],\label{eq_socp_sde} 
\end{align}
where $B$ is a $d$-dimensional standard Brownian motion.
For this example, we take $c_1 = \epsilon_1^{-2}$ such that the HJB \eqref{eq_HJBPDE} admits an analytic solution given by  
\begin{equation}\label{HJBsol}
    v(t, x) = - \ln\br{\E{\exp \br{-g(X_T^{t, x})}}}, \quad X_T^{t, x} := x + (T-t)b + \sqrt{2 \epsilon_1} B_{T-t}.
\end{equation}
}

\cb{
The specific case of the HJB equation \eqref{eq_HjbLq}, the associated SOCP~\eqref{eq_socp_example} and parabolic counterpart~\eqref{eq_semiparab}}, have been used often as benchmark problems in related literature, e.g., \cite{weinan2017deep,Ji2022Solving,Bachouch2022Deep,han2018solving,wang2022is,He2023Learning,raissi2018forwardbackward,hu2024sdgd}.
However, most studies have focused on the terminal function $g(x) = \ln(0.5 (1+\abs{x}^2))$, resulting in a smooth exact solution $v$ with inherent low-dimensional structure.
In this work, we consider a more complicated $g(x)$ given in \eqref{eq_HjbLq}, under which the exact solution $x \mapsto v(t, x)$ is anisotropic along spatial dimensions and highly oscillatory around $t = T$; see \cref{fig1_hjb2_turet0te}.
Thus this example can effectively demonstrate the ability of numerical methods to address complex high-dimensional problems.
    
\cb{
In the numerical tests, specific settings of \eqref{eq_HjbLq} are considered.
By selecting different values of $b$, $\epsilon_0$, and $\epsilon_1$,  
three particular instances of \eqref{eq_HjbLq} are obtained:
\begin{itemize}
    \item HJB-1: $b = \br{0, 0, \cdots, 0}^{\top} \in \R^d$, $\epsilon_0 = 0.1 \pi$, $\epsilon_1 = 1$. 
    \item HJB-2: $b = \br{1, 1, \cdots, 1}^{\top} \in \R^d$, $\epsilon_0 = 0.3 \pi$, $\epsilon_1 = 0.2$;
    \item HJB-3: $b = \br{1, 1, \cdots, 1}^{\top} \in \R^d$, $\epsilon_0 = 0.3 \pi$, $\epsilon_1 = 0.1$.
\end{itemize}
With the optimal control in \eqref{eq_HjbLq} explicitly solved, HJB-1 reduces to the semilinear parabolic equation \eqref{eq_semiparab}. 
HJB-2 and HJB-3 are more intricate variants of HJB-1, featuring a non-zero drift coefficient $b$ and a smaller $\epsilon_1$. 
As shown in \Cref{fig_hjb2}, the solutions $x \mapsto v(t, x)$ of HJB-2 and -3 remains oscillatory at $t = 0$, even for $T=1$.
}

\cb{
The SOC-MartNet (\cref{alg_amnet}) is applied to HJB-1 to -3, where no explicit form is used for $\inf_{\kappa \in U} H$, and the optimal control $u^*$ is approximated by  the neural network $u_{\alpha}$.
The numerical method solves $v(0, x)$ for $x \in D_0$ with $D_0$ consits of $M$ uniformly spaced gird points on $S_2 \cup S_3$, where $S_2$ is the line segment defined in \eqref{eq_defS2} and $S_3 := \bbr{\B{l}(s): s \in [-1, 1]}$ is a space curve in $\mathbb{R}^d$ with
\begin{equation}\label{eq_defls}
    \B{l}(s) := \br{l_1, l_2, \cdots, l_d}^{\top} \in \R^d, \quad l_i := s \times \mathrm{sgn}\br{\sin(i)} + \cos(i + \pi s), \quad s \in \R,
\end{equation}
and $\mathrm{sgn}(z) := -1, 0, 1$ for $z < 0$, $= 0$ and $> 0$, respectively. 
For $d \geq 1000$, the upper bound $\bar{\lambda}$ on Line 11 of \cref{alg_amnet} is specially set to $100$ for $T = 0.01$ and set to $10^3$ for $T = 0.1, 0.5, 1$.
The relevant numerical results and some of the convergence histories are presented in \cref{fig2_hjb2,fig_hjb2}.}

As the numerical results show, even without using the explicit form of $\inf_{\kappa \in U} H$, the SOC-MartNet (\cref{alg_amnet}) still works well for the HJB equation with oscillatory and anisotropic terminal function, and with $d$ up to \cb{$10^4$}.

\begin{figure}[t]
    \centering
    \subfloat[$d=100$, $T=1$]{\includegraphics[width=0.24\textwidth]{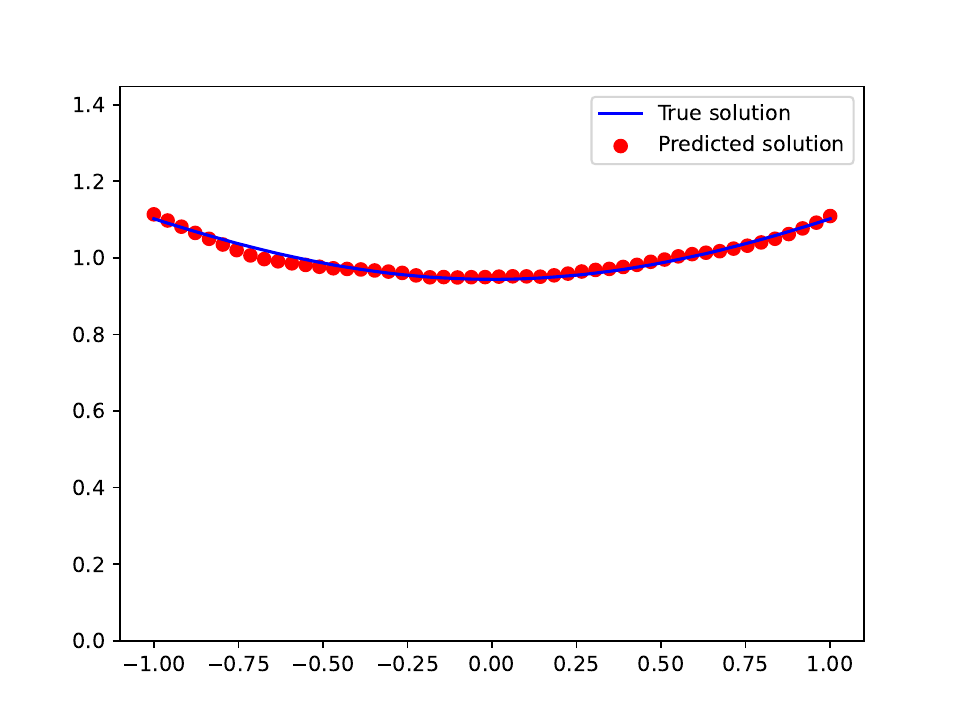}}
    \subfloat[$d=100$, $T=0.5$]{\includegraphics[width=0.24\textwidth]{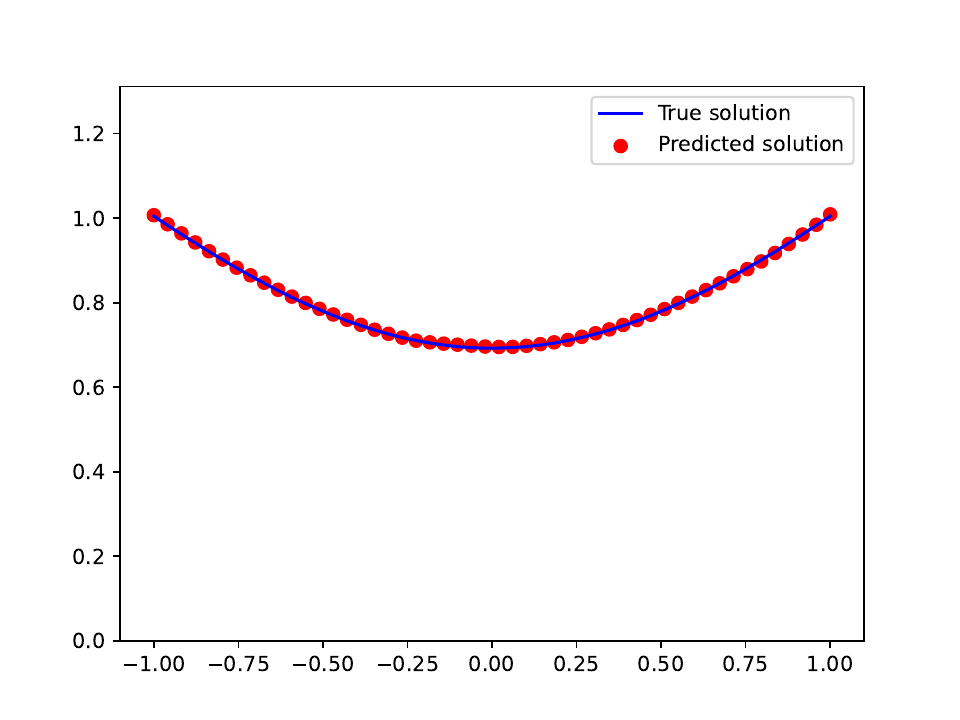}}
    \subfloat[$d=100$, $T=0.1$]{\includegraphics[width=0.24\textwidth]{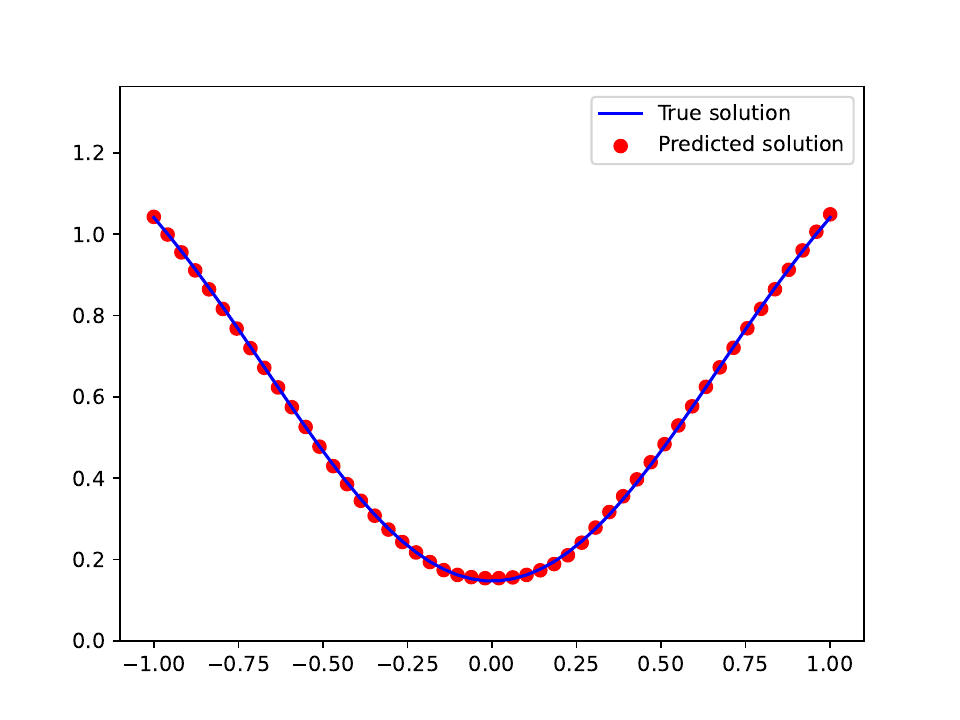}}
    \subfloat[$d=100$, $T=0.01$]{\includegraphics[width=0.24\textwidth]{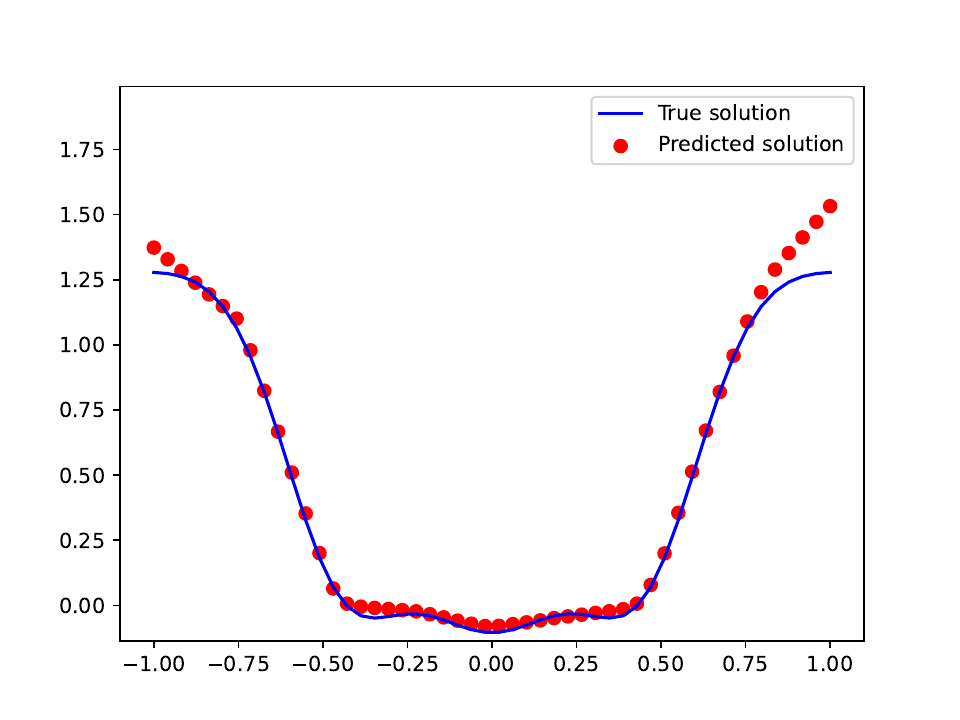}}
    \\
    \subfloat[$d=1000$, $T=1$]{\includegraphics[width=0.24\textwidth]{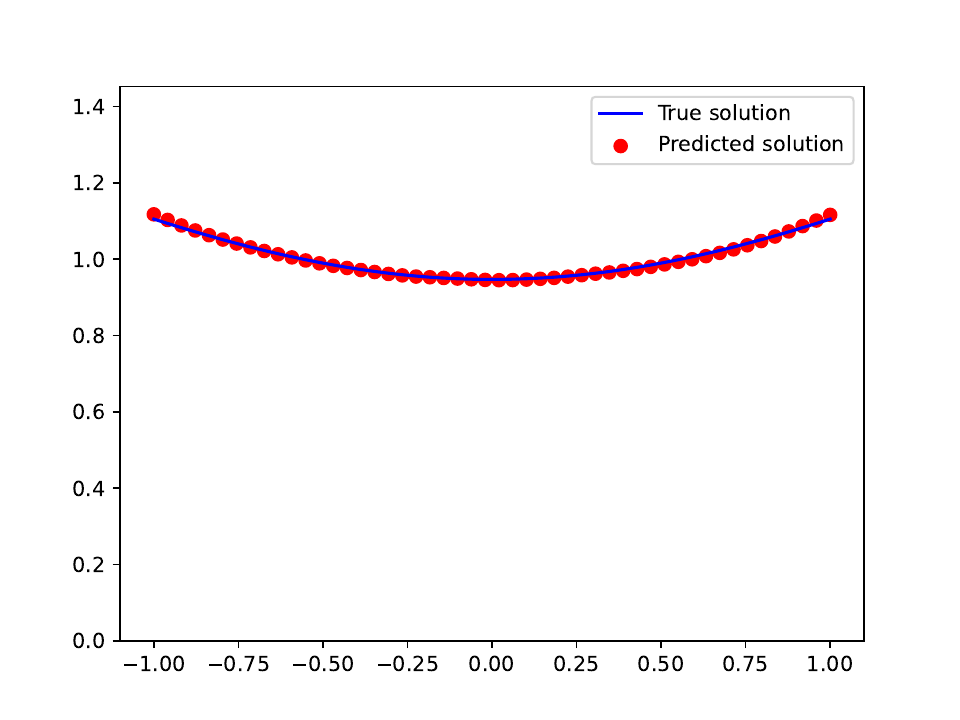}\label{fig2_hjb2_d1e3T1}}
    \subfloat[$d=1000$, $T=0.5$]{\includegraphics[width=0.24\textwidth]{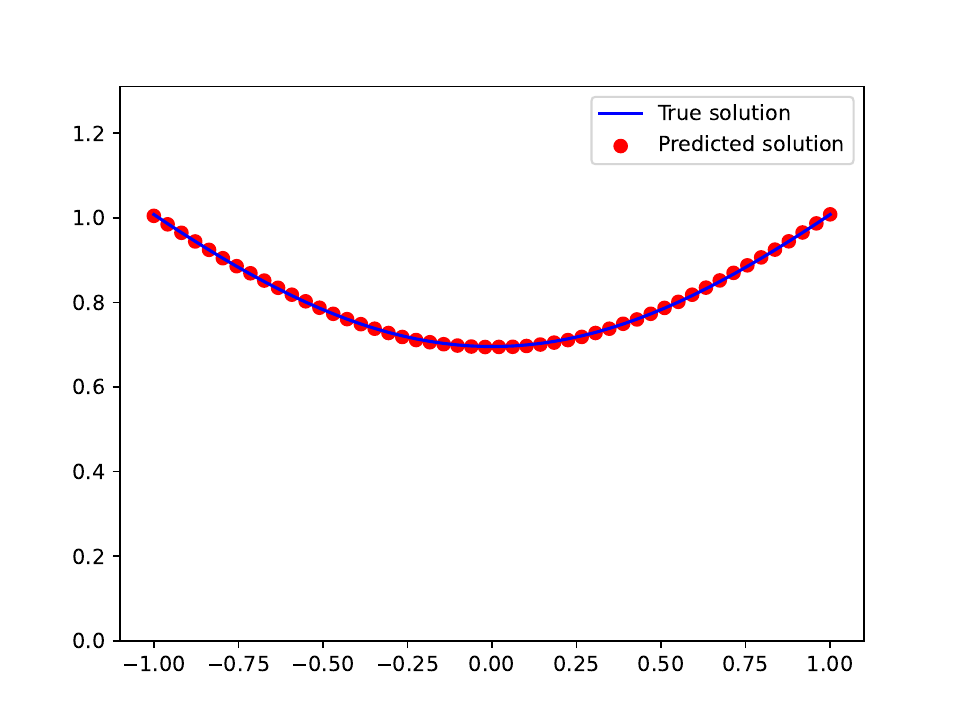}\label{fig2_hjb2_d1e3T5en1}}
    \subfloat[$d=1000$, $T=0.1$]{\includegraphics[width=0.24\textwidth]{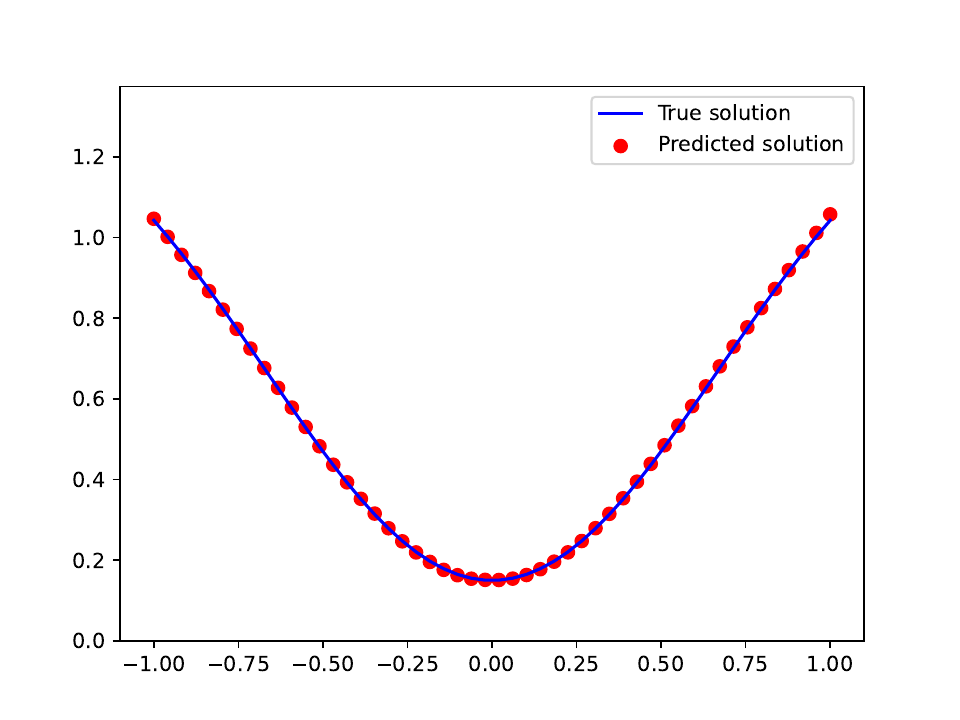}\label{fig2_hjb2_d1e3T1en1}}
    \subfloat[$d=1000$, $T=0.01$]{\includegraphics[width=0.24\textwidth]{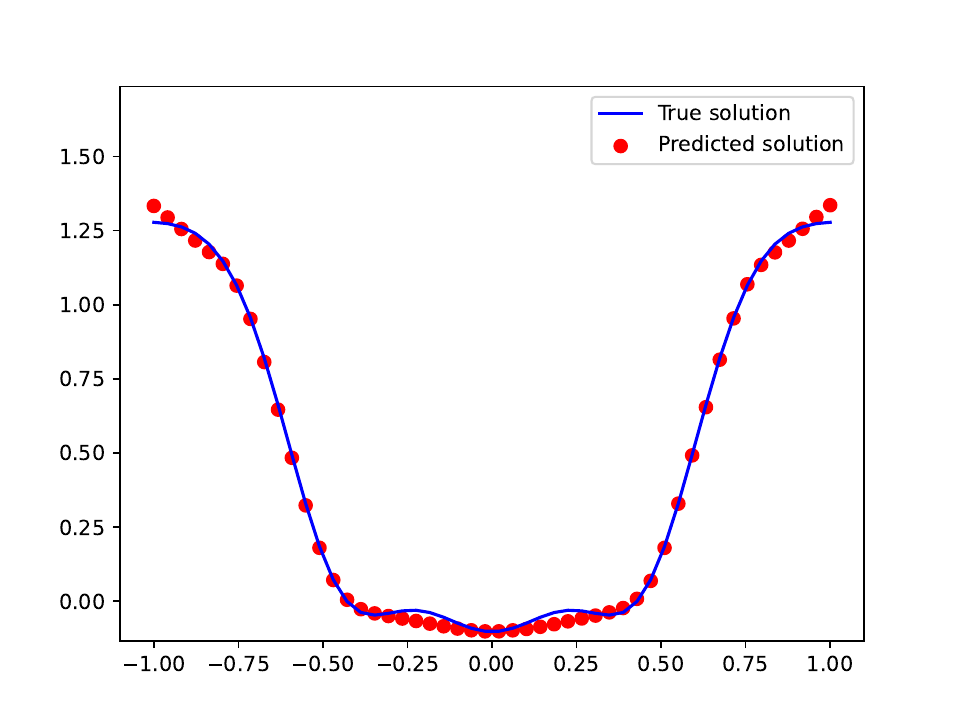}\label{fig2_hjb2_d1e3T1en2}}
    \\
    \subfloat[$d=2000$, $T=1$]{\includegraphics[width=0.24\textwidth]{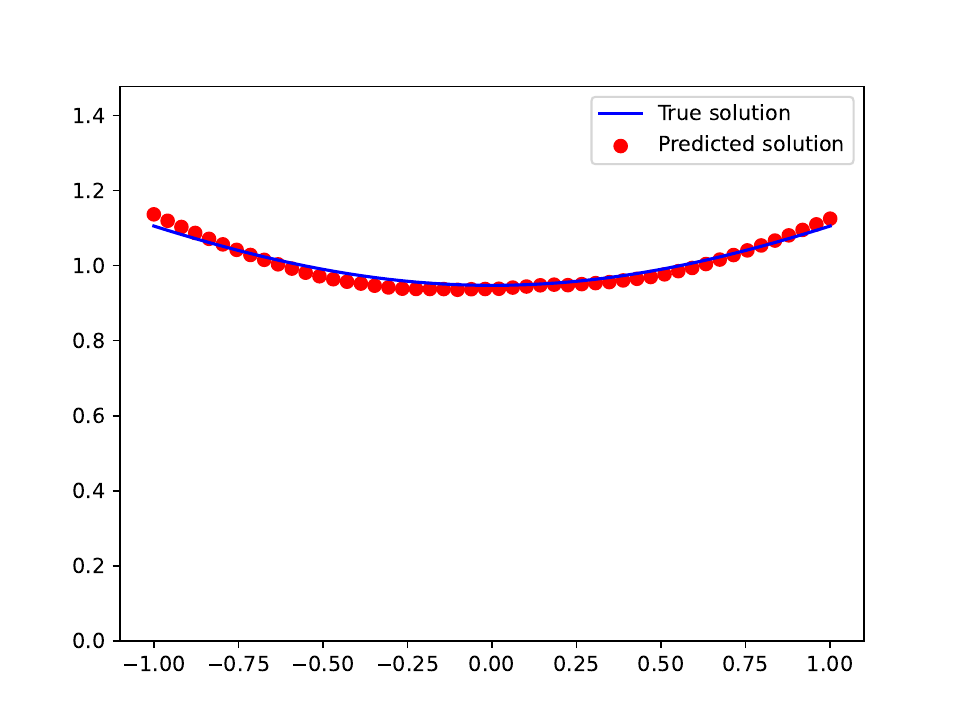}\label{fig2_hjb2_d2e3T1}}
    \subfloat[$d=2000$, $T=0.5$]{\includegraphics[width=0.24\textwidth]{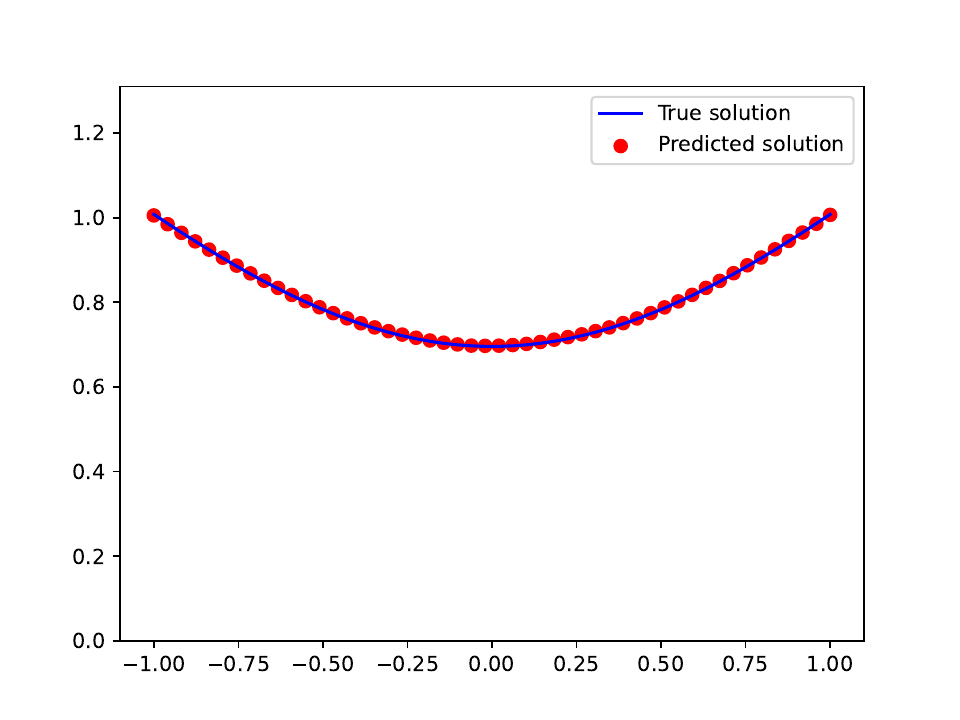}\label{fig2_hjb2_d2e3T5en1}}
    \subfloat[$d=2000$, $T=0.1$]{\includegraphics[width=0.24\textwidth]{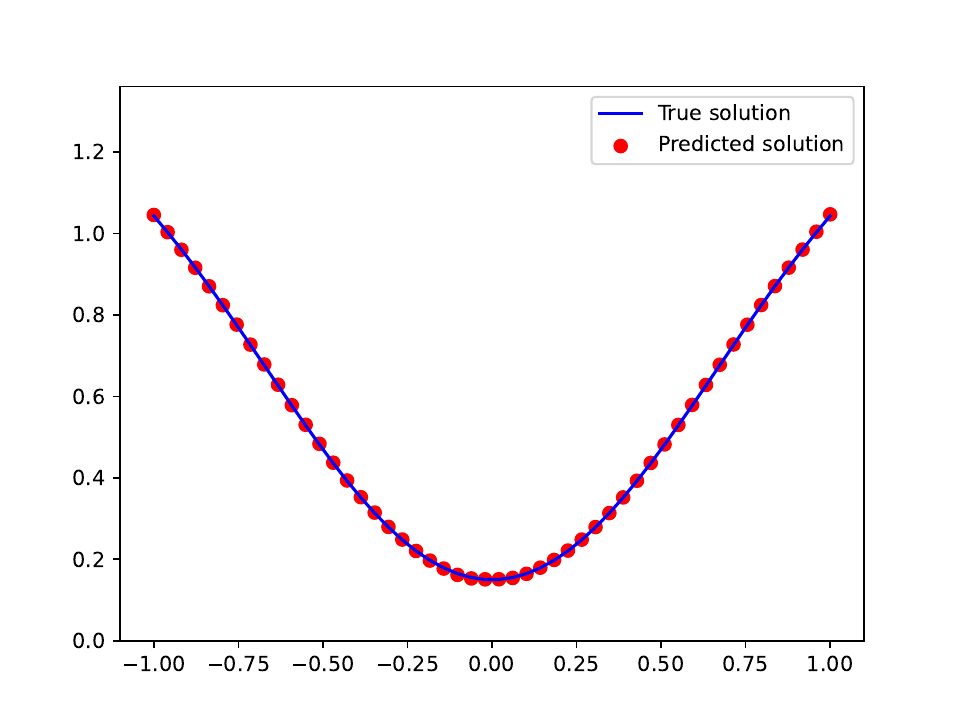}\label{fig2_hjb2_d2e3T1en1}}
    \subfloat[$d=2000$, $T=0.01$]{\includegraphics[width=0.24\textwidth]{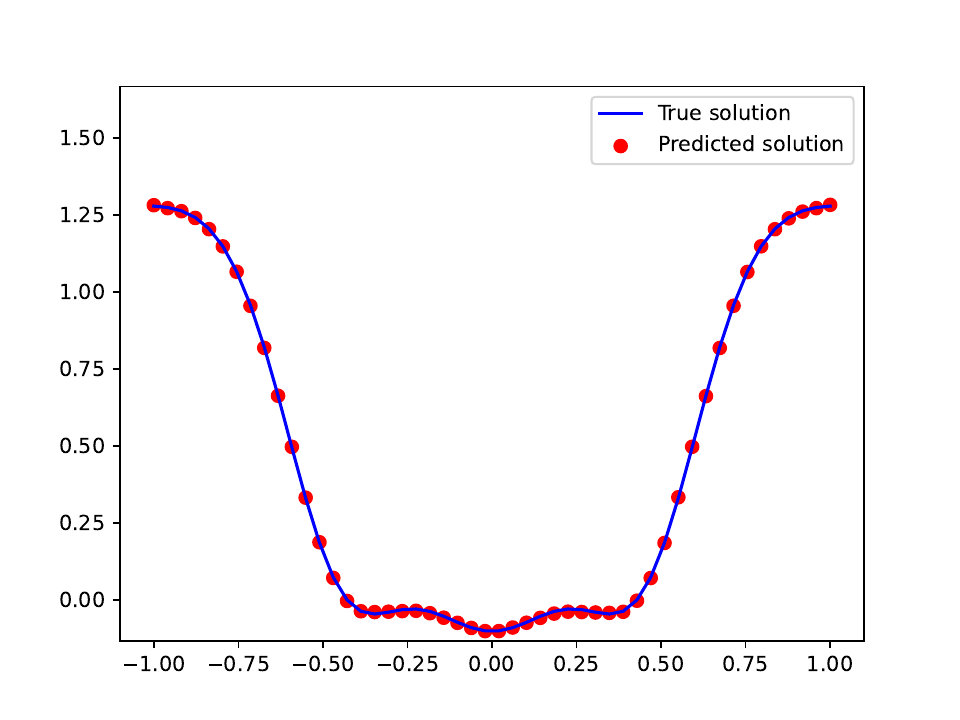}\label{fig2_hjb2_d2e3T1en2}}
    \caption{Graphs of the true solution and the numerical solution of SOC-MartNet for $s \mapsto v(t, s \B{1}_d)$ at $t = 0$ given by HJB-1.
    }\label{fig2_hjb2}
\end{figure}

\begin{figure}[htbp]
    \centering
    \subfloat[$s \mapsto v(0, s \B{1}_d)$, \\ HJB-2, $d=2000$]{\includegraphics[width=0.25\textwidth]{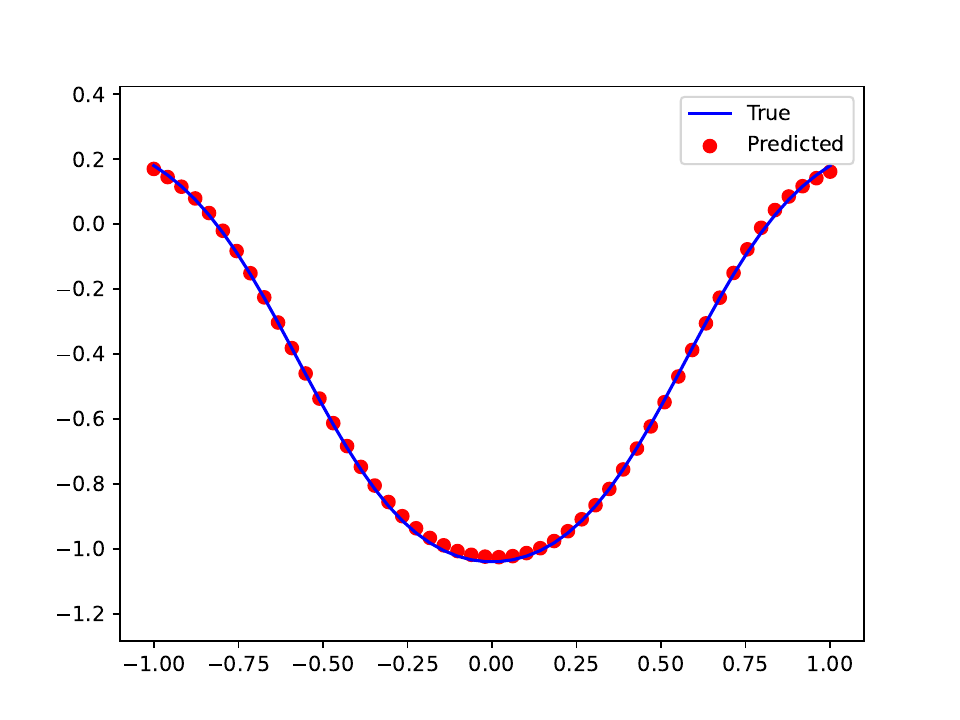}}
    \subfloat[$s \mapsto v(0, \B{l}(s))$, \\ HJB-2, $d=2000$]{\includegraphics[width=0.25\textwidth]{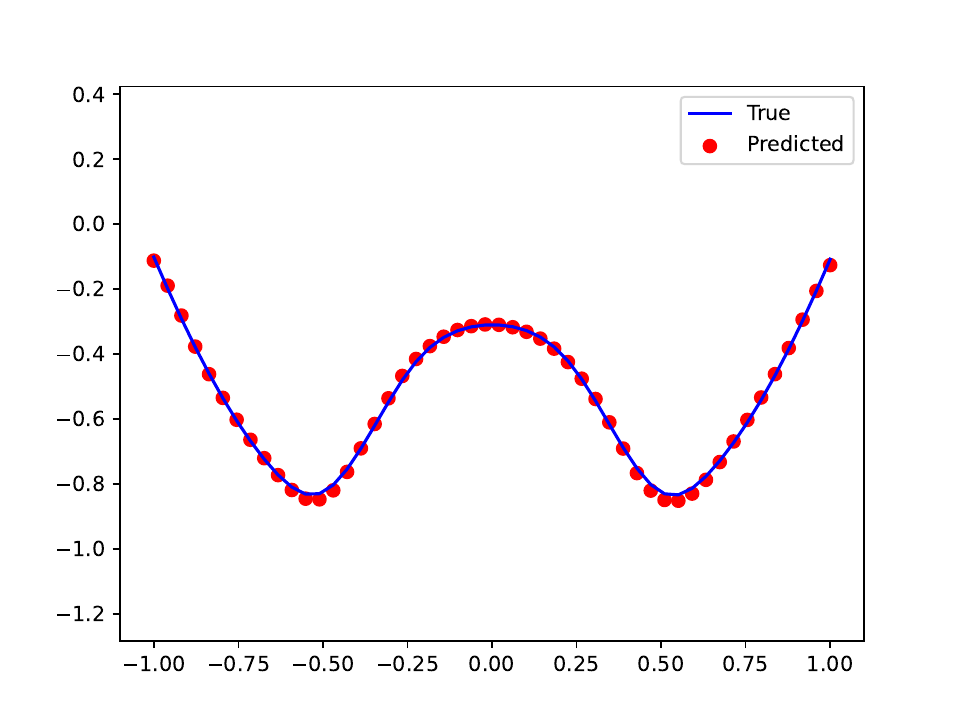}}
    \subfloat[$s \mapsto v(0, s \B{1}_d)$, \\ HJB-3, $d=2000$]{\includegraphics[width=0.25\textwidth]{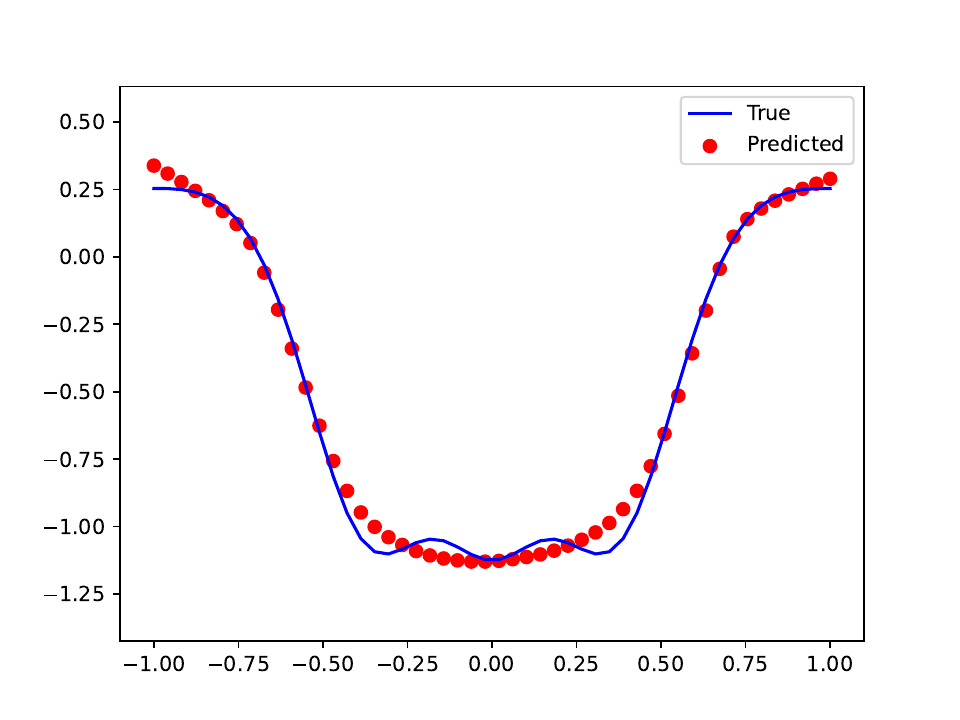}}
    \subfloat[$s \mapsto v(0, \B{l}(s))$, \\ HJB-3, $d=2000$]{\includegraphics[width=0.25\textwidth]{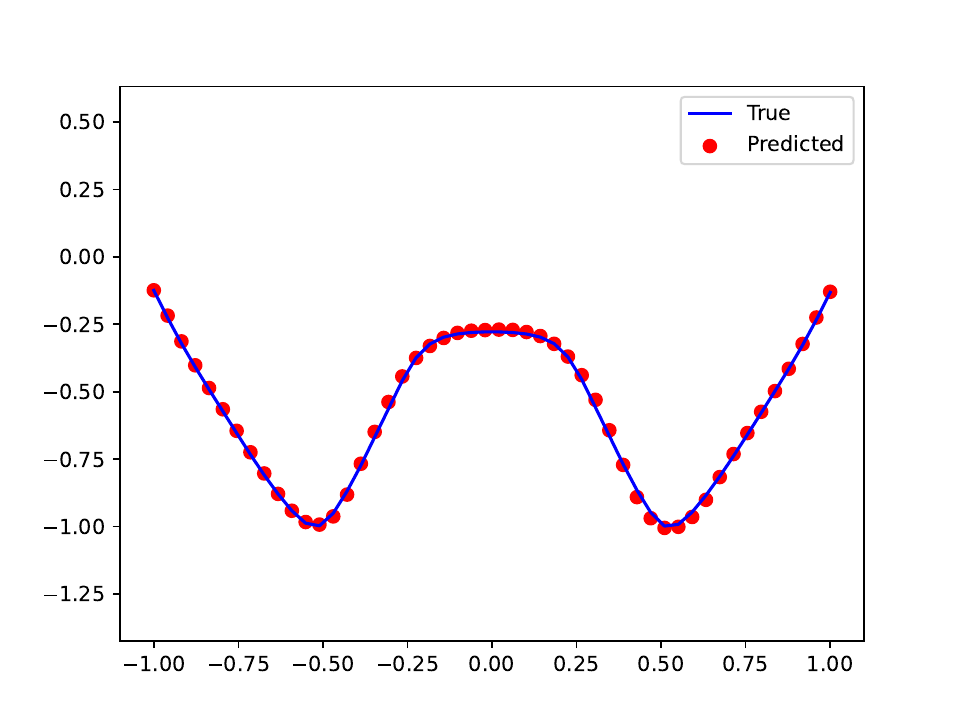}}
    \\
    \subfloat[$s \mapsto v(0, \B{l}(s))$, \\ HJB-2, $d=10^4$]{\includegraphics[width=0.25\textwidth]{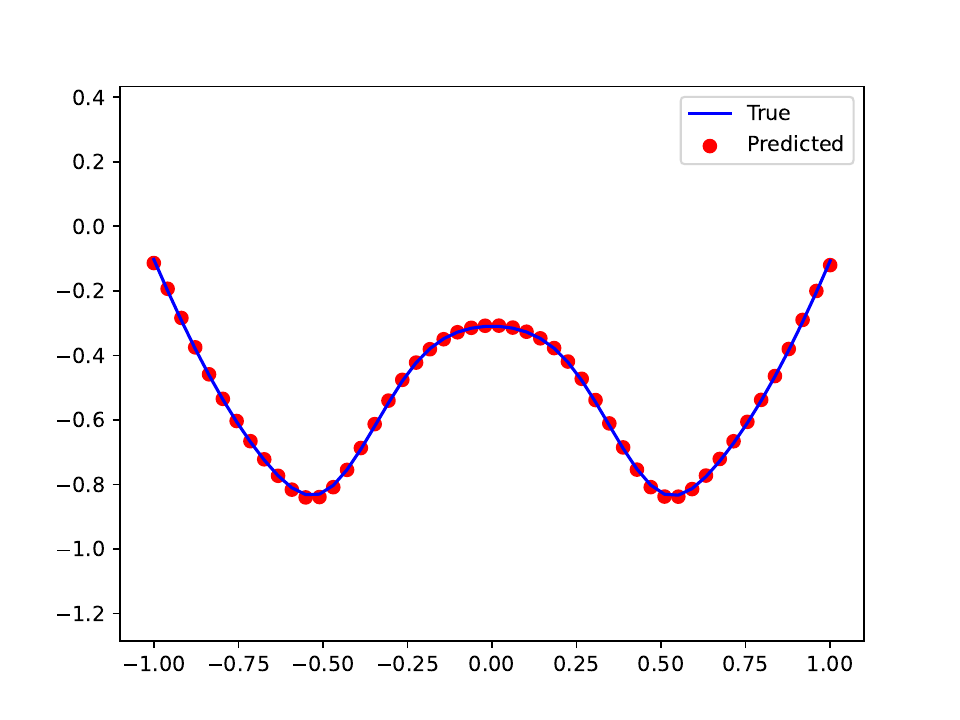}}
    \subfloat[Mart. Loss vs Iter., HJB-2, $d=10^4$]{\includegraphics[width=0.25\textwidth]{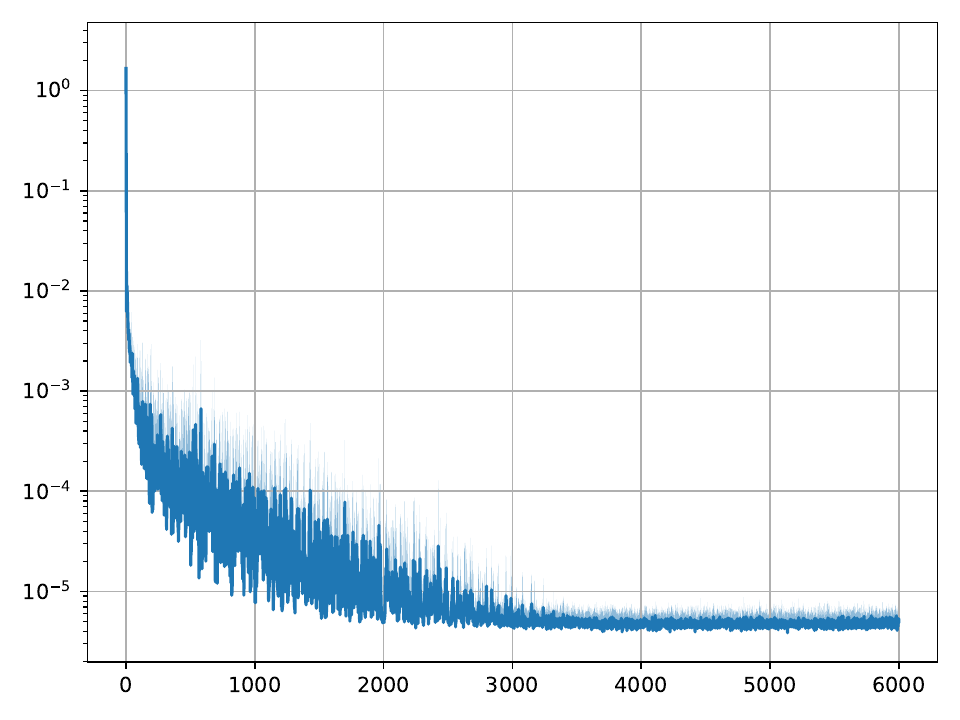}}
    \subfloat[Hamilt. vs Iter., HJB-2, $d=10^4$]{\includegraphics[width=0.25\textwidth]{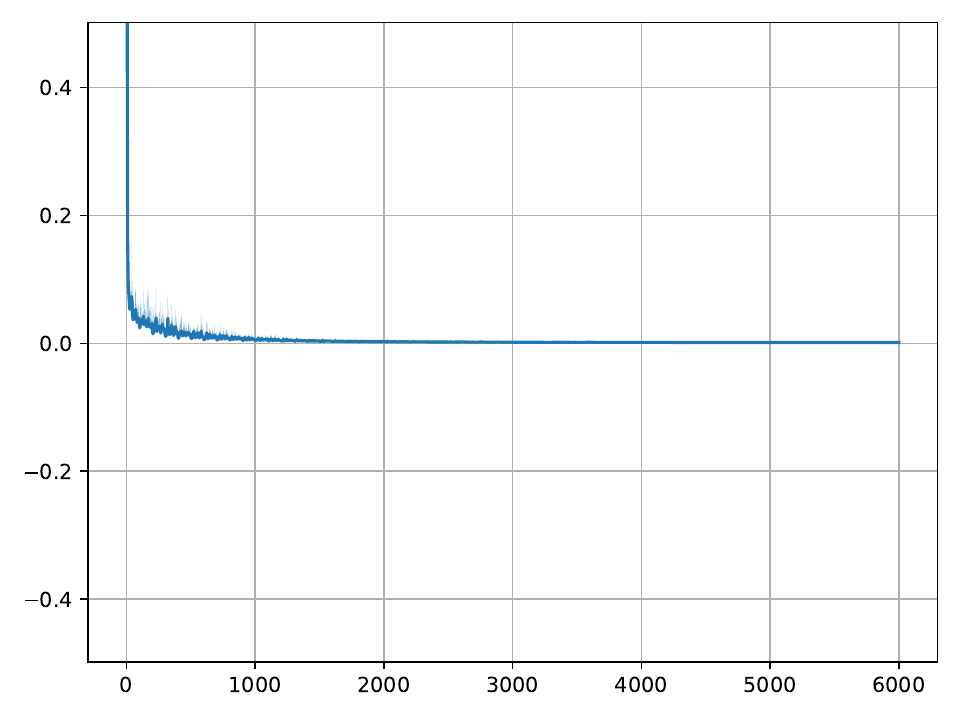}}
    \subfloat[RE vs Iter., HJB-2, $d=10^4$]{\includegraphics[width=0.25\textwidth]{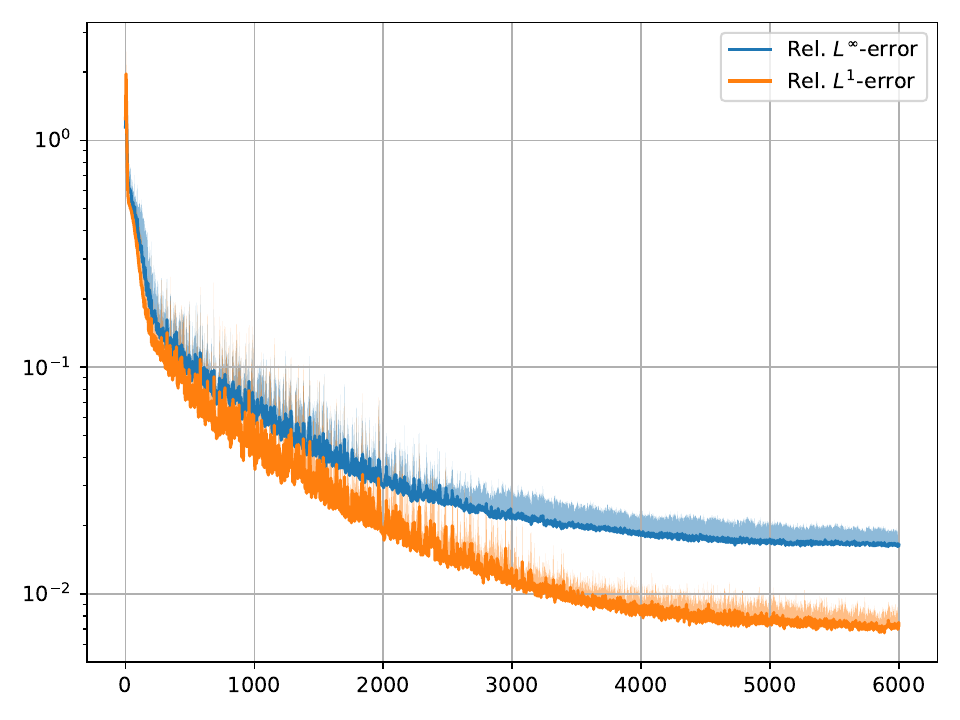}}
    \\
    \subfloat[$s \mapsto v(0, \B{l}(s))$, \\ HJB-3, $d=10^4$]{\includegraphics[width=0.25\textwidth]{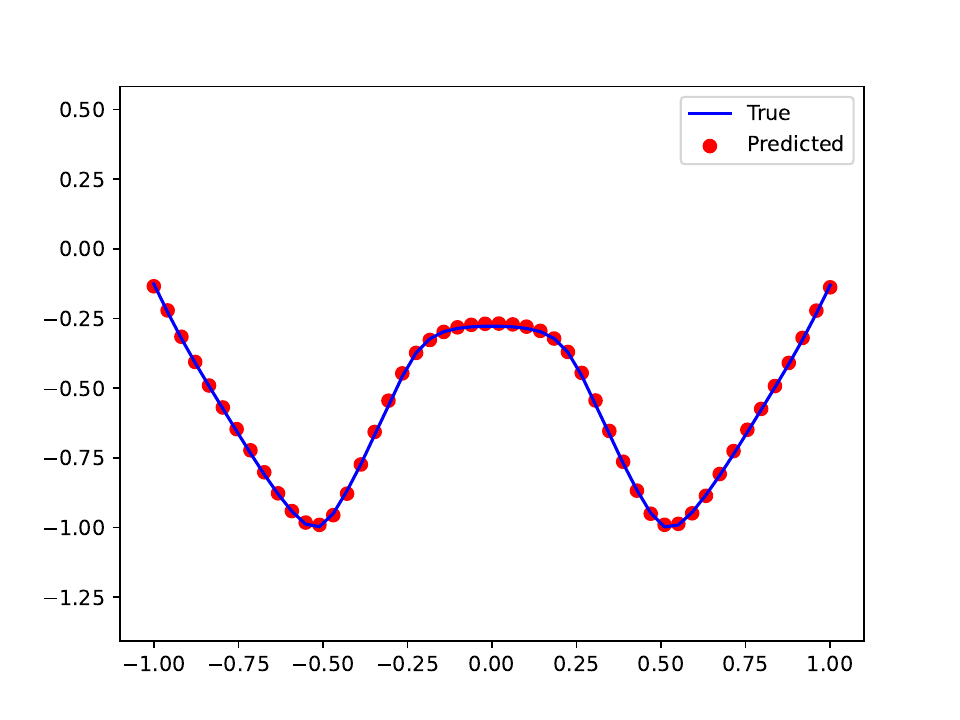}}
    \subfloat[Mart. Loss vs Iter., HJB-3, $d=10^4$]{\includegraphics[width=0.25\textwidth]{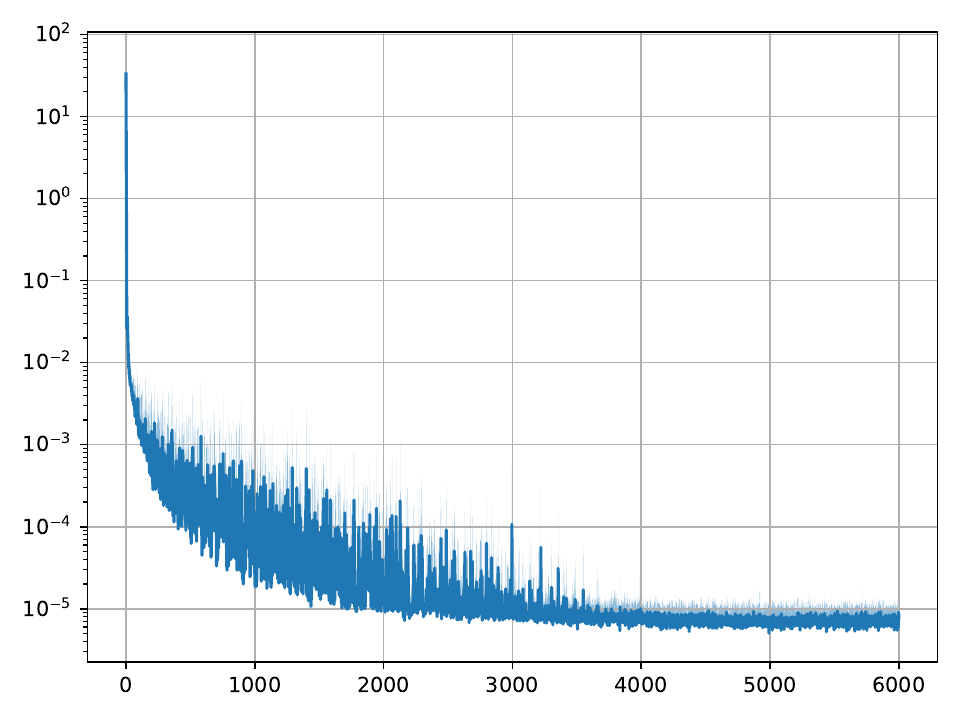}}
    \subfloat[Hamilt. vs Iter., HJB-3, $d=10^4$]{\includegraphics[width=0.25\textwidth]{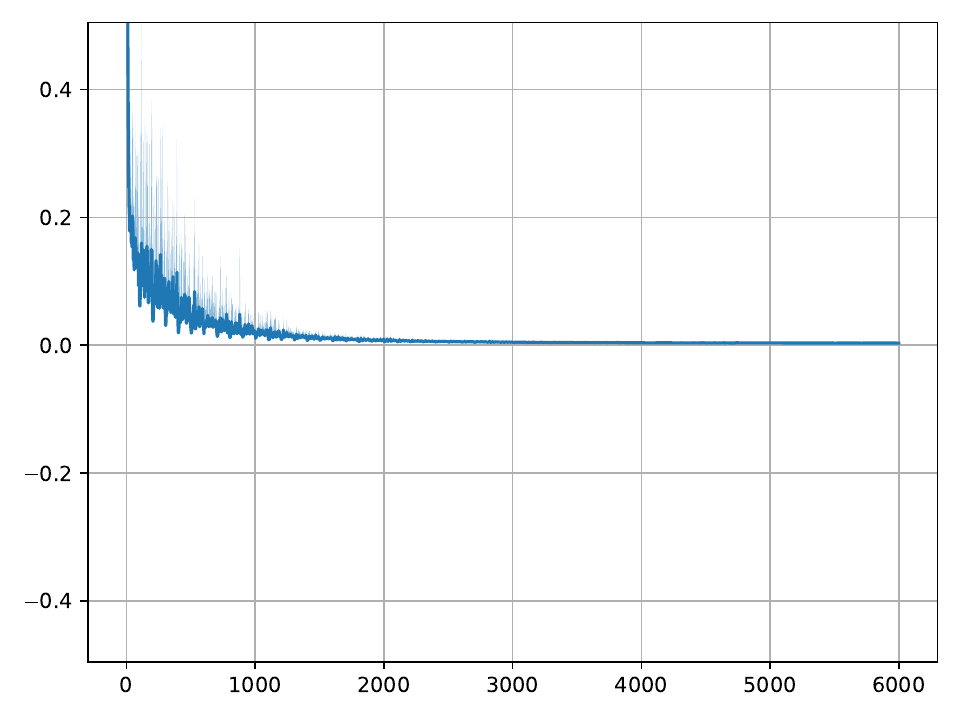}}
    \subfloat[RE vs Iter., HJB-3, $d=10^4$]{\includegraphics[width=0.25\textwidth]{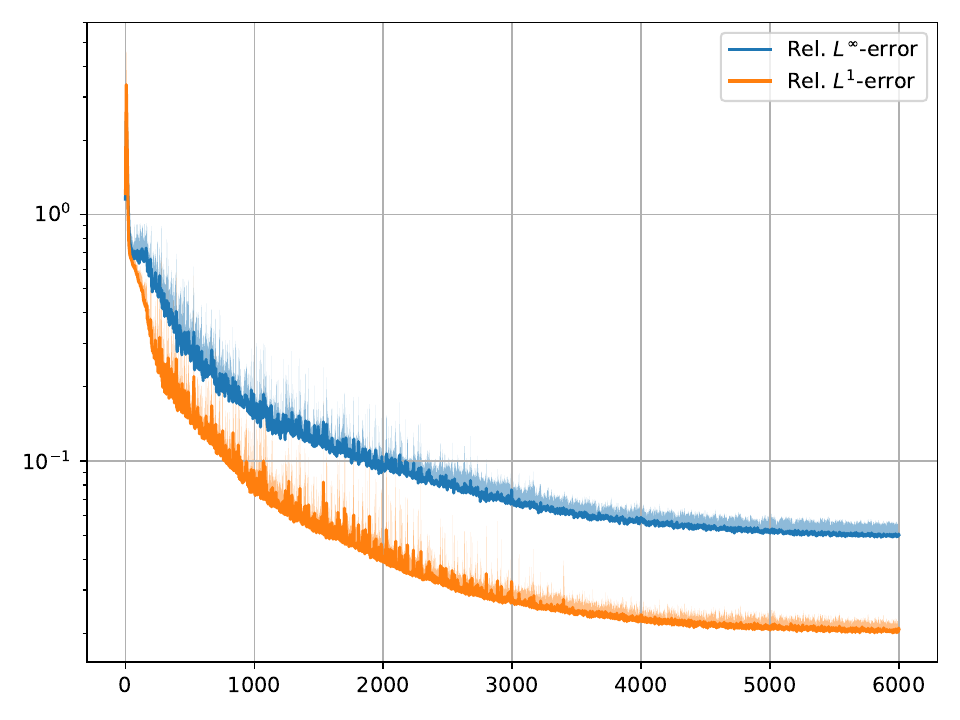}}
    \caption{
    \cb{Numerical results of SOC-MartNet (\Cref{alg_amnet}) for HJB-2 and HJB-3 with $T=1$ and $d = 2000$, $10000$.
    The shaded region represents the mean + $2 \times$ SD of the plotted values across 5 independent runs.}
    % The relevant running times are included in \Cref{tab_rt}. 
    }\label{fig_hjb2}
\end{figure}

\cb{
\subsection{Validity of SOC-MartNet solution in space-time region}\label{sec_path_valid}

To illustrate the generalization of our method, we present \Cref{fig_socp_domain} for HJB-2 with $d=1000$. 
The SOC-MartNet solutions remain close to the exact solution for $(t, x)$ outside the time-space region given by $t = 0$ and $x \in D_0$. 
The relative error on the sample paths $\{X_n\}_{n=0}^N$ grows slightly with $t$ as expected given that the variance of $X_t$ increases over time, causing sparser path distributions.
Despite this, the relative errors remain satisfactory across the entire time interval, indicating that SOC-MartNet performs well in the region explored by $X_t$.
}
\begin{figure}[t]
    \centering
    \subfloat[$s \mapsto v(r, s\B{1}_d + r \B{1}_d)$ with $r = 0.125$]{\includegraphics[width=0.33\textwidth]{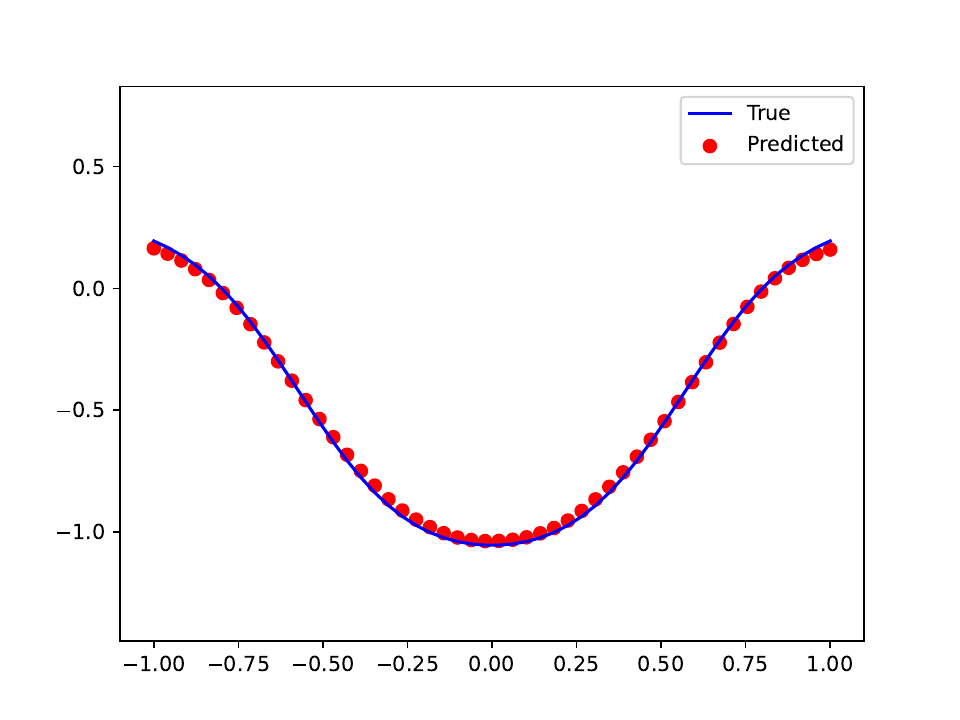}}
    \subfloat[$s \mapsto v(r, s\B{1}_d + r \B{1}_d)$ with $r = 0.25$]{\includegraphics[width=0.33\textwidth]{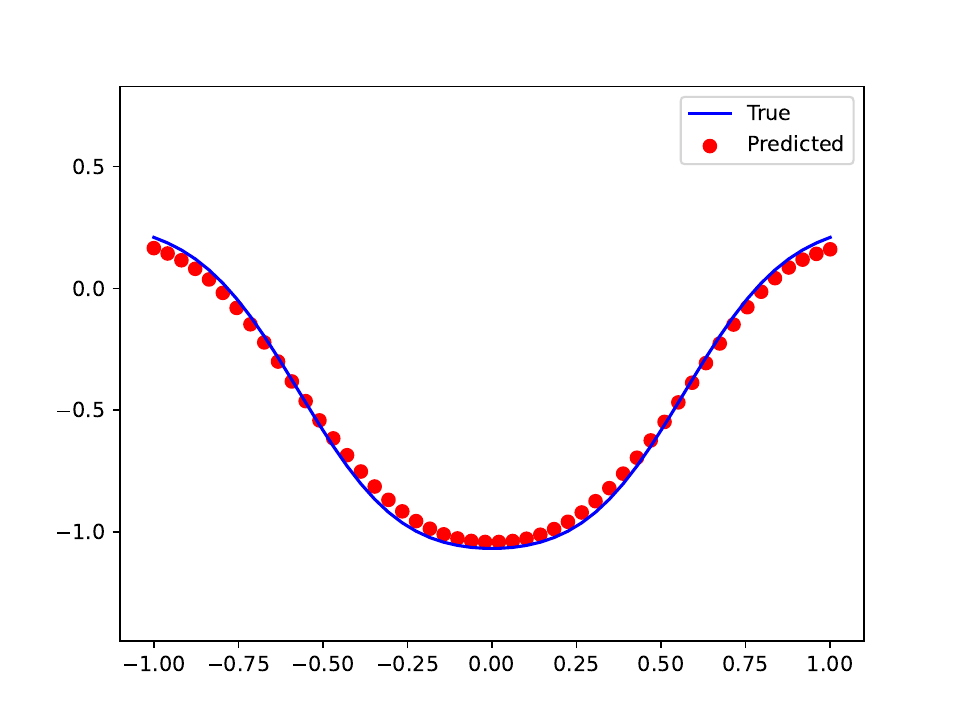}}
    \subfloat[RE vs $t$]{\includegraphics[width=0.33\textwidth]{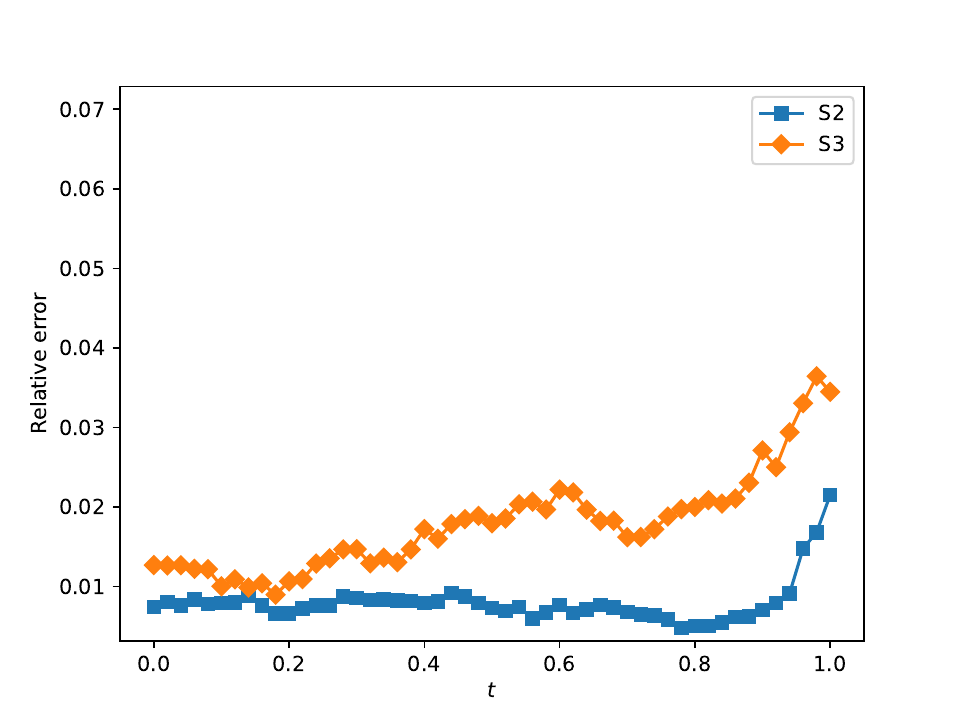}}
    \caption{
    \cb{(a) (b) Numerical solutions of SOC-MartNet (\Cref{alg_amnet}) for HJB-2 with $d=1000$ and $s \in [-1,1]$.
    (c) The RE of the solution along on 8 paths of the process $\bbr{X_n}_{n=0}^N$ given by \eqref{eq_EulerSDE} with $X_0 = 0$ for S2, and $X_0 = \B{l}(0.75)$ for S3.}
    }\label{fig_socp_domain}
\end{figure}

\cb{
\subsection{SOCP with a shifted target}\label{sec_scop_st}

We consider a SOCP with a shifted target function in the terminal condition whose minimum is away from the origin as considered in \cite{Li2024neural}. 
Specifically, the considered SOCP is given by \eqref{eq_socp_example} with $T=1$, $b = 0$, and with the terminal function $g$ replaced by 
\begin{equation}\label{eq_defg}
    g(x) = C_g \ln\br{0.5 \vbr{\big}{1 + \sum_{i=1}^d \br{x_i - 3}^2}}, 
\end{equation}
where we set $C_g=10$ so the exact solution expression \eqref{HJBsol} will not have an overflow issue.
The terminal function encourages the control $u$ to drive the state process $X_t^u$ to get close to the target point $\br{3, 3, \cdots, 3}^{\top} \in \R^d$ at the terminal time.
To avoid trivial solutions caused by noise-dominated state equations, we take a small diffusion coefficient as $\epsilon_1 = 0.1$ in \eqref{eq_socp_sde}.
}

\cb{
We apply the SOC-MartNet (\cref{alg_amnet}) to solve $v(0, x)$ for $x \in D_0$, where $D_0$ consists of $M$ uniformly spaced grid points on the line segment $S_2$ defined in \eqref{eq_defS2}.
At each iteration, we record the empirical values of $J(u_{\theta})$, which are estimated via Monte Carlo sampling based on 256 sample paths of the state process in \eqref{eq_stateq} approximated by the Euler-Maruyama method with $x_0 = 0$.
The relevant numerical results for $d=100$ and $500$ are presented in \cref{fig_socp_shifted}, which shows that the SOC-MartNet provides accurate solutions for the value function.
Moreover, the cost value of $u_{\theta}$ quickly converges to the theoretical lower bound $J(u^*) := v(0, x_0)$.
}

\cb{
% We should remark that in \eqref{eq_defg}, the coefficient $C_g = 10$ is smaller than the value of $1000$ used in \cite[Section 4.2.3]{Li2024neural}. 
% In our tests, using a larger $C_g$ led to a numeric overflow when computing the reference solution via \eqref{eq_vtx_hjb1} and also negatively impacted the performance of our method. 
% This degradation may be attributed to our method's dependence on value functions for updating the control, which suffers from performance degeneration caused by sharp value function.
% For such problems, it might be more appropriate to use the method proposed in \cite{Li2024neural}, where the cost functional $J(u)$ in \eqref{eq_defcost} is integrated into the loss function to enhance the training of the control.    
}

\begin{figure}[t]
    \centering
    \subfloat[$s \mapsto v(0, s\B{1}_d)$, $d=100$]{\includegraphics[width=0.3\textwidth]{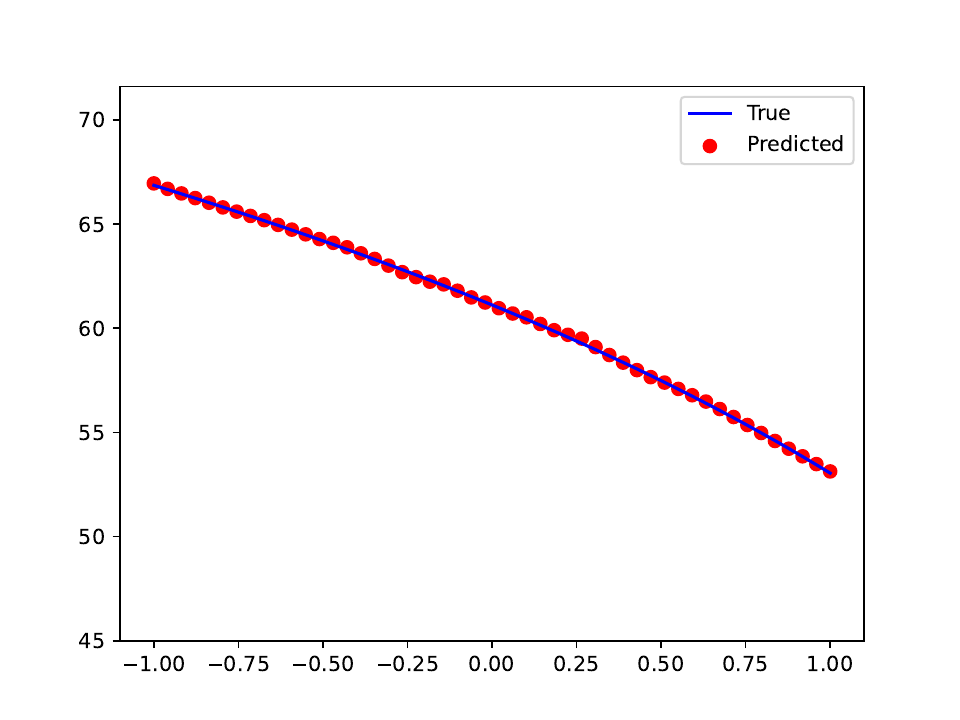}}
    \subfloat[RE vs Iter., $d=100$]{\includegraphics[width=0.28\textwidth]{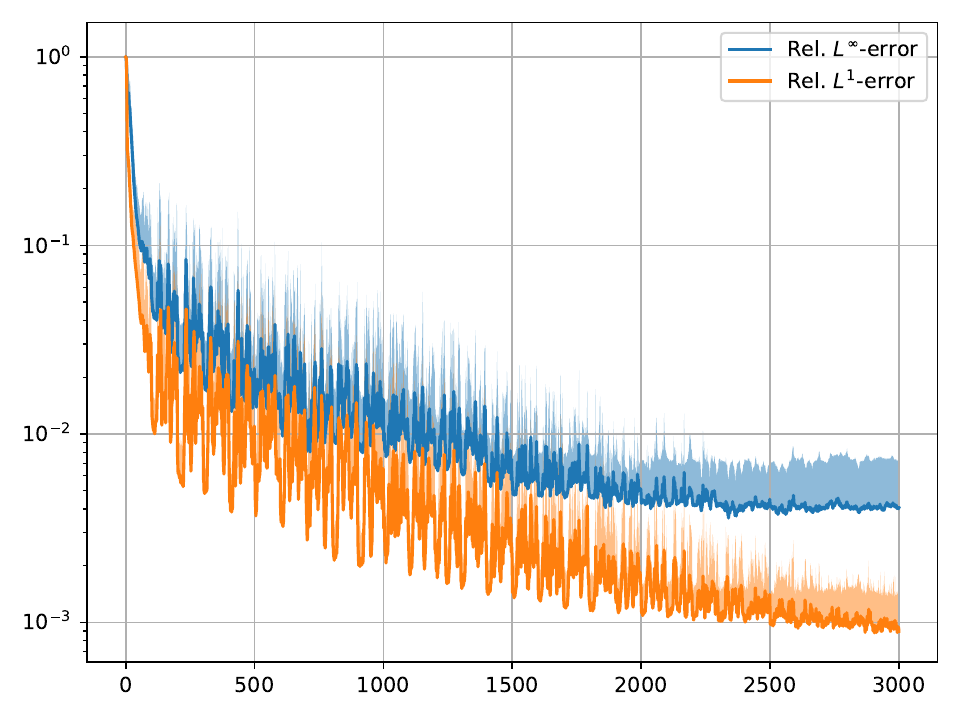}}
    \subfloat[$J(u)$ vs Iter., $d=100$]{\includegraphics[width=0.28\textwidth]{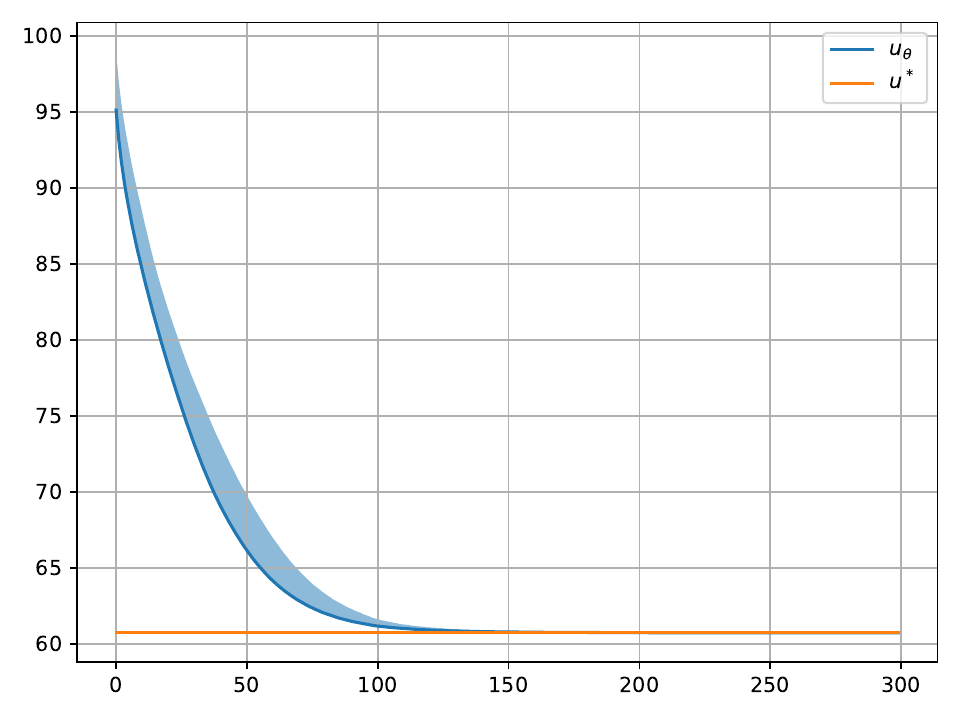}}
    \\
    \subfloat[$s \mapsto v(0, s\B{1}_d)$, $d=500$]{\includegraphics[width=0.3\textwidth]{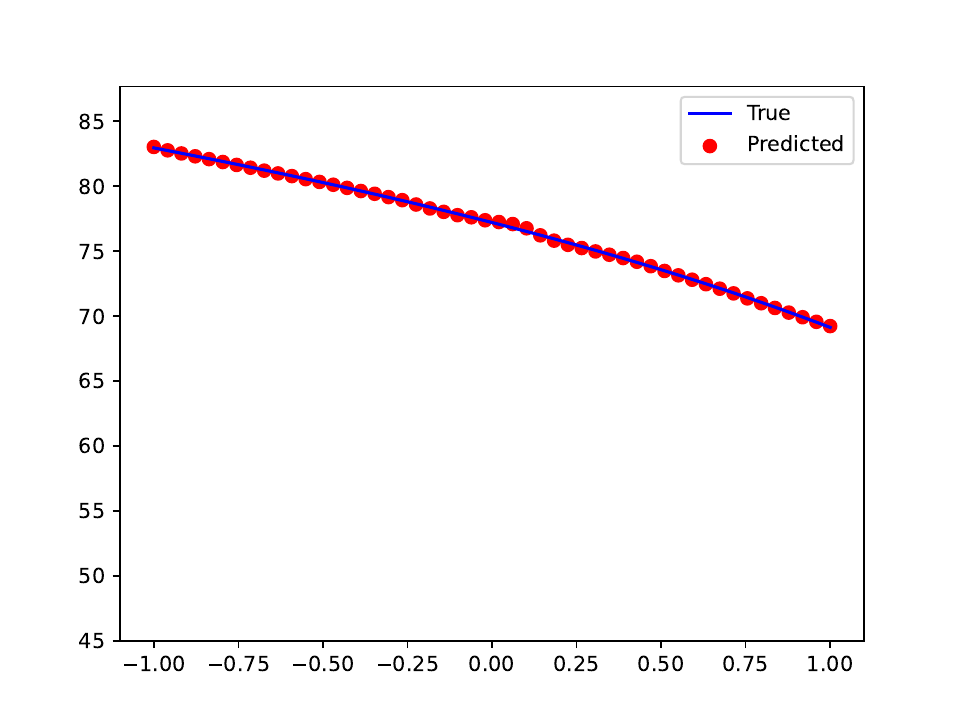}}
    \subfloat[RE vs Iter., $d=500$]{\includegraphics[width=0.28\textwidth]{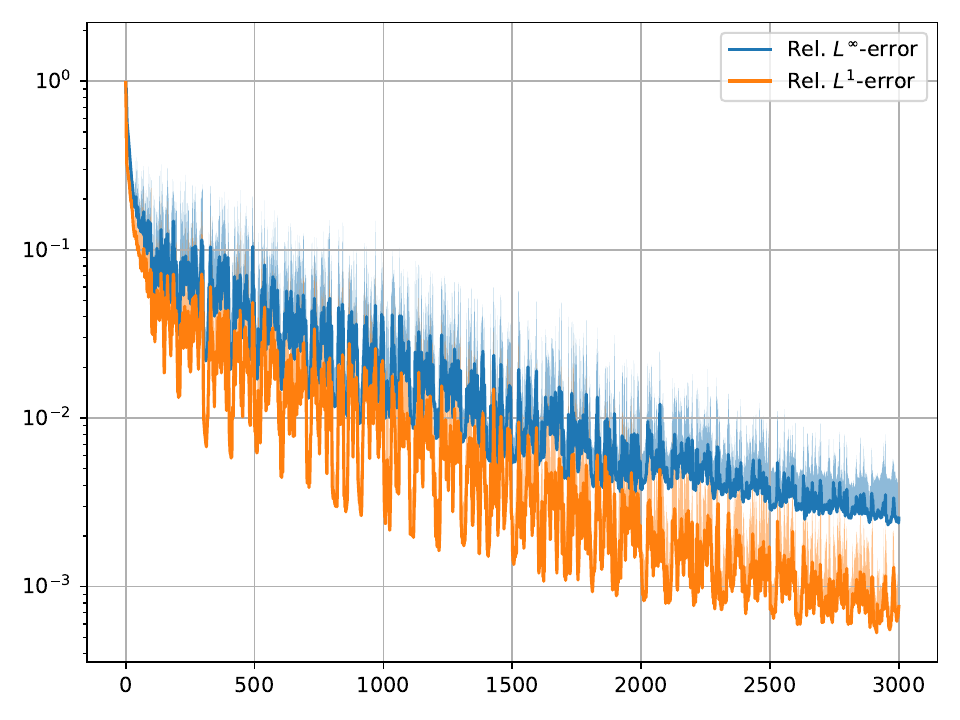}}
    \subfloat[$J(u)$ vs Iter., $d=500$]{\includegraphics[width=0.28\textwidth]{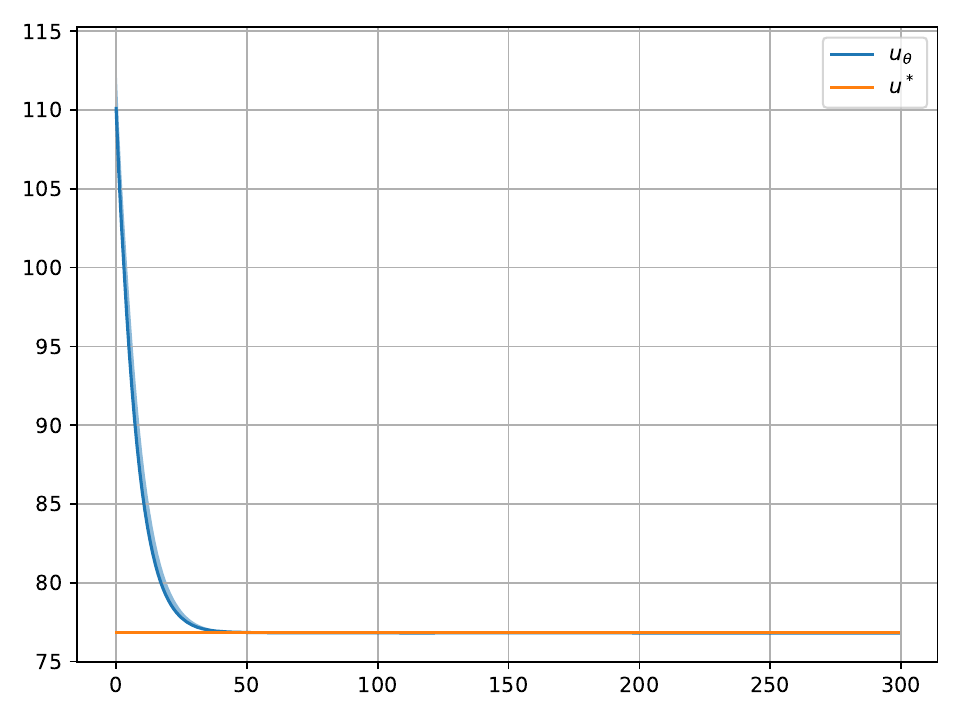}}
    \caption{
    \cb{Numerical results of SOC-MartNet (\Cref{alg_amnet}) for the SOCP with a shifted target in \cref{sec_scop_st}.
    % The cost functional $J(u)$ in (c) and (f) is estimated via Monte Carlo sampling based on 256 sample paths of the state process approximated by the Euler method.
    The shaded region represents the mean + $2 \times$ SD of the plotted values across 5 independent runs.}
    }\label{fig_socp_shifted}
\end{figure}

\cb{
\subsection{Non-degenerated HJB equation without explicit \texorpdfstring{$\inf_u H$}{inf\_u H}}\label{sec_eps}
The $\inf_u H$ in the HJB equation \cref{eq_HjbLq} in fact has an explicit expression, so to obtain a HJB equation without an explicit form of $\inf_u H$, we modify it 
%the governing equation of \cref{eq_HjbLq} 
by an perturbation of a nonlinear term $\varepsilon \sin(\B{1}_d^{\top} \kappa)$ to the Hamiltonian, i.e.,
\begin{equation}\label{eq_hjbeps}
    \br{\partial_t + b^{\top} \partial_x + \epsilon_1 \Delta_x} v(t, x) + \inf_{\kappa \in \R^d} \br{2 \kappa^{\top} \partial_x v(t, x) + c_1 \abs{\kappa}^2 + \varepsilon \sin(\B{1}_d^{\top} \kappa)} = 0
\end{equation}
where $\B{1}_d$ is given in \eqref{eq_defS2};  $\varepsilon$ is a scalar parameter to be specified, and other parameter settings are the same as HJB-2 introduced in \cref{sec_nondegHjb}.
}

\cb{
Compared with \eqref{eq_HjbLq}, the Hamiltonian in \eqref{eq_hjbeps} has no closed-form minimization due to the added nonlinear term. 
Consequently, this example can be used to test the effectiveness of SOC-MartNet for general problems. 
In the tests, we use a decreasing sequence of the perturbation parameter $\varepsilon = 1, 1/2, 1/4, 1/8$, and the solution of \eqref{eq_hjbeps} is expected to approach HJB-2 as $\varepsilon$ decreases. 
Hence, the solution of HJB-2 (i.e., $\varepsilon =0$ ) is taken as a reference to evaluate the numerical results of SOC-MartNet for \eqref{eq_hjbeps}.
The relevant numerical results are presented in \Cref{fig_socp_eps}, where we can see that the solution provided by SOC-MartNet indeed approaches to the reference solution as $\varepsilon$ decreases.
}

\begin{figure}[t]
    \centering
    \subfloat{\includegraphics[width=0.45\textwidth]{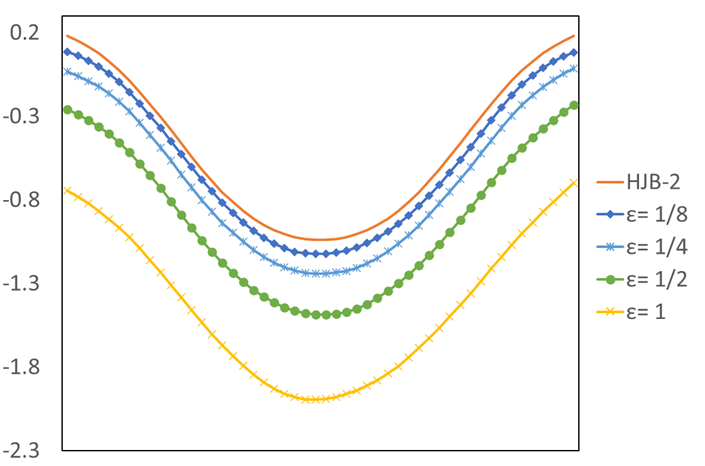}}
    \caption{
    \cb{Numerical results of SOC-MartNet (\Cref{alg_amnet}) for $s \mapsto v(0, s\B{1}_d), s \in [-1,1]$ from \eqref{eq_hjbeps} with $d = 1000$ and $\varepsilon = 1, 1/2, 1/4, 1/8$ (from bottom up). 
    The top orange curve without markers is the reference solution of HJB-2 (i.e. $\varepsilon=0$).}
    }\label{fig_socp_eps}
\end{figure}

\cb{
\subsection{Test on efficiency}\label{sec_dis_eff}

Finally, we investigate the efficiency of our method in comparison with the
deep BSDE proposed by \cite{han2018solving,weinan2017deep} as a benchmark method.
We apply the SOC-MartNet (\cref{alg_amnet}) and the deep BSDE to solve $v(0, x)$ for one point $x = \br{0, 0, \cdots, 0}^{\top} \in \R^d$ as the deep BSDE method is only designed for finding solution at a single point while our method can in fact give solution in a region at once where the $X_0=x$ is sampled. 
The deep BSDE is implemented with the code publicly available from the website\footnote{\url{https://github.com/frankhan91/DeepBSDE}} to solve HJB-2 introduced in \cref{sec_nondegHjb}, where the infimum Hamiltonian $\inf_u H$ is computed by its explicit form.
At each time step, the deep BSDE employs a fully connected neural network with two hidden layers, each containing $W_h$ units, to approximate the policy function. 
Other hyperparameters align with those used in example (13) of \cite{han2018solving}.
For the SOC-MartNet, \cref{alg_amnet} is applied to HJB-2 without using the explicit $\inf_u H$.
Both networks $u_{\alpha}$ and $v_{\theta}$ in SOC-MartNet are structured with four hidden layers, each comprising $W_h$ units. 
The values of $W_h$ used are reported in the numerical results.
Both methods are implemented on a single GPU (NVIDIA A100-SXM4-80GB) with a needed number $I$ of iterations to reach a comparable accuracy,  with $I=1000$ iterations for the SOC-MartNet and $I=2000$ for the deep BSDE.
Table~\ref{tab_db} reports the errors and running times.
}

\cb{
As shown in Table~\ref{tab_db}, although SOC-MartNet does not rely on the explicit $\inf_u H$, it still achieves accuracy, using shorter running times, comparable to the deep BSDE  and
its efficiency stems from the parallel nature in the computation of the loss functions in \eqref{eq_defhatL} and \eqref{eq_defG} across time and space.

\begin{table}[t]
    \caption{\cb{The REs and running times (RTs) of SOC-MartNet (\Cref{alg_amnet}) and deep BSDE for solving HJB-2 introduced in \cref{sec_nondegHjb}.
    % The notation $W_h$ represents the width of the hidden layers of neural networks.
    The number of iteration steps is $I=1000$ for SOC-MartNet and $I=2000$ for deep BSDE.}
    }\label{tab_db}
    \resizebox{\textwidth}{!}{%
    \begin{tabular}{@{}ll|ll|ll|ll@{}}
    \toprule
                 &      & \multicolumn{2}{c|}{RE} & \multicolumn{2}{|c|}{SD} & \multicolumn{2}{|c}{RT} \\
    Network width & $d$  & Deep BSDE & SOC-MartNet & Deep BSDE& SOC-MartNet & Deep BSDE & SOC-MartNet \\ \midrule
    $W_h = 256$  & 100  & 3.23E-03 & 1.24E-03    & 8.90E-04 & 2.43E-03    & 73       & 48          \\
                 & 300  & 1.18E-03 & 1.14E-03    & 8.19E-04 & 1.20E-03    & 90       & 53          \\
                 & 500  & 1.05E-03 & 2.89E-03    & 5.71E-04 & 1.22E-03    & 116      & 58          \\
                 & 800  & 2.54E-03 & 4.35E-03    & 1.90E-03 & 1.41E-03    & 145      & 73          \\
                 & 1000 & 1.86E-03 & 5.83E-03    & 2.08E-03 & 2.57E-03    & 170      & 118         \\ 
                 &      &          &             &          &             &          &             \\
    $W_h = d+10$ & 100  & 2.86E-03 & 2.94E-03    & 1.09E-03 & 1.67E-03    & 53       & 20          \\
                 & 300  & 3.27E-04 & 8.99E-04    & 1.27E-04 & 1.02E-03    & 103      & 51          \\
                 & 500  & 6.41E-04 & 6.92E-04    & 3.58E-04 & 4.99E-04    & 184      & 103         \\
                 & 800  & 1.28E-03 & 4.00E-03    & 1.14E-03 & 3.78E-03    & 386      & 255         \\
                 & 1000 & 3.77E-03 & 1.11E-03    & 5.87E-03 & 4.26E-04    & 615      & 360         \\
                 \bottomrule
    \end{tabular}
    }
\end{table}
}

\cb{
We further highlight that the efficiency of our method can be significantly enhanced through the use of multiple GPU parallel computing.
Specifically, we apply the SOC-MartNet (\cref{alg_amnet}) to HJB-3, and record the running times under different numbers of GPUs used by DDP.}
The relevant results are presented in \Cref{tab_rt}.
Notably, the SOC-MartNet can be accelerated significantly by the DDP for the problem with dimensionality \cb{$d \geq 500$}.

\begin{table}[t]
    \centering
    \caption{\cb{The running times (unit: second) of SOC-MartNet (\Cref{alg_amnet}) for solving HJB-3 introduced in \cref{sec_nondegHjb}. 
    The algorithm is accelerated by DDP on multiple GPUs. 
    The number of iteration steps and the network widths are set to $I=6000$ and $W_h = d+10$, respectively.}
    }\label{tab_rt}
    \resizebox{!}{!}{%
        \begin{tabular}{@{}llllll@{}}
            \toprule
            GPUs                & $d=100$  & $d=500$ & $d=800$  & $d=1000$  & $d=2000$  \\ \midrule
            1 $\times$ A100     &  153     &  775    & 1350     & 1909     & 5032  \\
            2 $\times$ A100     &  151     &  430    & 721      & 1001     & 2582  \\
            4 $\times$ A100     &  142     &  233    & 393      & 536      & 1387  \\
            8 $\times$ A100     &  148     &  153    & 231      & 302      & 773     \\ \bottomrule
        \end{tabular}%
    }
\end{table}

\section{Conclusions and future work}\label{sec_conclu}

In this paper, we propose a martingale-based DNN method for stochastic optimal controls, SOC-MartNet,  based on the original DeepMartNet \cite{cai2023deepmartnet,cai2023deepmartnet2} combined with adversarial learning.
A Hamiltonian process and a cost process are introduced using a control network and a value network. A loss function for the networks' training is used to ensure the minimum principle for the optimal feedback control as well as the fulfillment of the HJB equation by the value function, and the latter is implemented through a martingale formulation and a training that drives the value function network to satisfy its martingale properties at convergence.
% The HJB equation is reformulated as an optimization problem, i.e.,
% minimizing the Hamiltonian process with the cost process restricted to a martingale.
In SOC-MartNet, the martingale property of the cost process is however enforced by an adversarial learning,
whose loss function is built upon the projection property of conditional expectations.
Numerical results show that the proposed SOC-MartNet is effective and efficient for solving HJB-type equations with dimension up to \cb{$10,000$ in a small number of stochastic gradient method iterations (less than $6000$)} for the training, and particularly, for SOCPs where the $\inf H$ has no explicit expressions.

For future work, we will carry out theoretical analysis to study the convergence of the proposed SOC-MartNet and provide some mathematical justification of the observed computational performance of the method.

\section*{Acknowledgments} The authors thank Alain Bensoussan for helpful discussion on the HJB equation and stochastic optimal controls.

\bibliographystyle{siamplain}
% \bibliography{bibliography}

\end{document}